\newcommand{\A}{\mathbb{A}}
\newcommand{\bF}{\mathbf{F}}
\newcommand{\KK}{\mathbb{K}}
\newcommand{\PP}{\mathbb{P}}
\newcommand{\U}{\mathcal{U}}
\newcommand{\F}{\mathcal{F}}
\newcommand{\J}{\mathcal{J}}
\newcommand{\mcL}{\mathcal{L}}
\newcommand{\CO}{\mathcal{O}}
\newcommand{\fd}{\mathbf{F}_k(\dmn)}
\newcommand{\fp}{\mathbf{F}_k(\pmn)}
\newcommand{\dmn}{D_{m,n}^r}
\newcommand{\pmn}{P_{m,n}^r}
\DeclareMathOperator{\codim}{codim}
\DeclareMathOperator{\rank}{rank}
\DeclareMathOperator{\perm}{perm}
\DeclareMathOperator{\coeff}{coeff}
\DeclareMathOperator{\chara}{char}
\DeclareMathOperator{\Hom}{Hom}
\DeclareMathOperator{\Gr}{Gr}
\DeclareMathOperator{\Spec}{Spec}
\DeclareMathOperator{\Sym}{Sym}
\DeclareMathOperator{\GL}{GL}
\newcommand{\comp}{\mathfrak{C}}
\newcommand{\comment}[1]{}
\newtheorem{theorem}{Theorem}[section]
\newtheorem{prop}[theorem]{Proposition}
\newtheorem{cor}[theorem]{Corollary}
\newtheorem{conj}[theorem]{Conjecture}
\newtheorem{lemma}[theorem]{Lemma}
\theoremstyle{definition}
\newtheorem{defn}[theorem]{Definition}
\newtheorem{rem}[theorem]{Remark}
\newtheorem{question}[theorem]{Question}
\newtheorem{ex}[theorem]{Example}
\title{Fano Schemes of Determinants and Permanents}
\author{Melody Chan}
\address{Department of Mathematics, Harvard University
Cambridge, MA 02138, USA}
\email{mtchan@math.harvard.edu}
\author{Nathan Ilten}
\address{Department of Mathematics, Simon Fraser University,
8888 University Drive, Burnaby BC V5A1S6, Canada }
\email{nilten@sfu.ca}
\newcommand{\intersectioncompa}{
  \psset{unit=2cm}
  \begin{pspicture}(0,0)(3,3)
    \psdots[dotstyle=o](0,0)(1,0)(2,0)(3,0)(3,1)(3,2)(3,3)
\psdots(0,1)(1,1)(2,1)(0,2)(1,2)(2,2)(0,3)(1,3)(2,3)
\rput(1.5,1.5){
  {\tiny{$\left(\begin{array}{c c c}
0&*&0\\
*&*&*\\
0&*&0
  \end{array}\right)$
}}}
\rput(0.5,1.5){
  {\tiny{$\left(\begin{array}{c c c}
*&0&0\\
*&*&*\\
*&0&0
  \end{array}\right)$
}}}
\rput(2.5,1.5){
  {\tiny{$\left(\begin{array}{c c c}
0&0&*\\
*&*&*\\
0&0&*
  \end{array}\right)$
}}}
\rput(1.5,0.5){
  {\tiny{$\left(\begin{array}{c c c}
0&*&0\\
0&*&0\\
*&*&*
  \end{array}\right)$
}}}
\rput(1.5,2.5){
  {\tiny{$\left(\begin{array}{c c c}
*&*&*\\
0&*&0\\
0&*&0
  \end{array}\right)$
}}}
\rput(0.5,0.5){
  {\tiny{$\left(\begin{array}{c c c}
*&0&0\\
*&0&0\\
*&*&*
  \end{array}\right)$
}}}
\rput(0.5,2.5){
  {\tiny{$\left(\begin{array}{c c c}
*&*&*\\
*&0&0\\
*&0&0
  \end{array}\right)$
}}}
\rput(2.5,2.5){
  {\tiny{$\left(\begin{array}{c c c}
*&*&*\\
0&0&*\\
0&0&*
  \end{array}\right)$
}}}
\rput(2.5,0.5){
  {\tiny{$\left(\begin{array}{c c c}
0&0&*\\
0&0&*\\
*&*&*
  \end{array}\right)$
}}}
    \psline(3,3)(0,3)(0,0)    
    \psline[linestyle=dashed](0,0)(3,0)(3,3)
    \psline(1,0)(1,3)
    \psline(2,0)(2,3)
    \psline(0,1)(3,1)
    \psline(0,2)(3,2)
\end{pspicture}
}
\begin{document}
\maketitle
\begin{abstract}
Let $\dmn$ and $\pmn$ denote the subschemes of $\PP^{mn-1}$ given by the $r\times r$ determinants (respectively the $r\times r$ permanents) of an $m\times n$ matrix of indeterminates.  In this paper, we study the geometry of the Fano schemes $\fd$ and $\fp$ parametrizing the $k$-dimensional planes in $\PP^{mn-1}$ lying on $\dmn$ and $\pmn$, respectively.  We prove results characterizing which of these Fano schemes are smooth, irreducible, and connected; and we give examples showing that they need not be reduced.  We show that $\bF_1(D_{n,n}^n)$ always has the expected dimension, and we describe its components exactly.  Finally, we give a detailed study of the Fano schemes of $k$-planes on the $3\times 3$ determinantal and permanental hypersurfaces.
\end{abstract}

  \section{Introduction}\label{sec:intro}
Fix an algebraically closed field $\KK$. For numbers $r,m,n$ with $1<r\leq m\leq n$, let $\dmn$ and $\pmn$ denote the subschemes of $\PP_\KK^{mn-1}$ defined by the $r\times r$ determinants, respectively the $r\times r$ permanents, of an $m\times n$ matrix
$$\left(\!
\begin{array}{c c c}
	x_{1,1}& \cdots & x_{1,n}\\
	\vdots & \ddots& \vdots\\
	x_{m,1}&\cdots &x_{m,n}
\end{array}
\!\right)
$$
filled with $m n$ independent forms $x_{i,j}$. Whenever we are dealing with $\pmn$, we will make the standard assumption that $\chara \KK\neq 2$.
In this article, we study the Fano schemes $\bF_k(\dmn)$ and $\bF_k(\pmn)$.  These are subschemes of the Grassmannian $\Gr(k+1, mn)$  
parametrizing those $k$-dimensional planes in $\PP_\KK^{mn-1}$ that are contained in the schemes $\dmn$ and $\pmn$, respectively.

We have three main reasons for studying these Fano schemes.  
First, we would like to understand general Hilbert schemes better.  In the case of classical Hilbert schemes, i.e.~those parametrizing subschemes of $\PP^n$, well-understood examples are scarce, but we do know a number of general results: for example, classical Hilbert schemes are always connected \cite{hartshorne:66a}. 
In the case of general Hilbert schemes, i.e.~those parametrizing subschemes of a fixed closed subscheme $X\subset \PP^n$ that have some fixed Hilbert polynomial, even less is known.  Our Fano schemes $\bF_k(\dmn)$ and $\bF_k(\pmn)$ provide an interesting family  of Hilbert schemes whose study is tractable but whose geometry is still very rich.  

Our second reason for studying $\bF_k(\dmn)$ and $\bF_k(\pmn)$ comes from geometric complexity theory.  It is well-known that permanents and determinants behave completely differently from the perspective of complexity theory.  Indeed, computing the permanent of a square matrix is $\#P$-hard  \cite{valiant:79a}, while the determinant is computable in polynomial time.  In fact, one of the central conjectures in complexity theory, due to Valiant, posits that the permanent of an $n\times n$ matrix cannot be computed by affine-linear substitution from the determinant of square matrix whose size is polynomial in $n$ \cite{valiant:79a}.  Recently, Mulmuley and Sohoni developed an interesting representation-theoretic approach to this conjecture which is being pursued by a number of authors, see e.g.~\cite{gct} and the references there.  This new approach suggests that it is worthwhile to revisit the determinant and permanent from a geometric perspective.  More specifically, linear spaces lying in the determinantal and permanental hypersurfaces in $\PP^{n^2-1}$ play a particularly important role \cite[\S 5]{landsberg}.  Indeed, the spaces $\bF_k(\dmn)$ and $\bF_k(\pmn)$ do exhibit a number of interesting differences, as we will see.  (Actually, we began our study with the case $r=m=n$, which is the most interesting case for complexity theorists.  We then realized that we could extend our techniques to $\dmn$ and $\pmn$, and that their Fano schemes have some interesting geometric features that cannot be seen just in the hypersurface case.)

Third, we are interested in contributing to the study of permanental ideals, i.e.~the ideals defined by the $r\times r$ permanents of an $m\times n$ matrix.  These ideals are much less well-studied and behave very differently than determinantal ideals.
 For example, they can have many primary components, including embedded components, and in general their primary decompositions are not known, with a few nice exceptions, e.g.~\cite{kirkup, laubenbacher}.  In our paper, we are able to get new information about the linear spaces in these permanental schemes without computing their primary decompositions at all.

We next summarize the main results in this paper, starting with $\dmn$ in \S\ref{subsec:det} and turning to $\pmn$ in \S\ref{subsec:perm}.

\subsection{The Fano scheme $\fd$}\label{subsec:det}
First, let us define two quantities which appear in our main theorems. Fix $r,m,$ and $n$, and for $0\leq s\leq r-1$, let
\begin{align}
    \kappa(s) &= mn-(m-s)(s+n-r+1)-1, \text{ and}\label{eqn:kappa}\\
    \delta(s)&=(s+1+n-r)(r-s-1)+s(m-s).\label{eqn:delta}
\end{align}
The interpretation of $\kappa(s)$ is that it is the dimension of the linear space in $\PP^{mn-1}$ of maps $\KK^n\rightarrow \KK^m$ sending a fixed $(s+n-r+1)$-dimensional subspace $V$ of $\KK^n$ into a fixed $s$-dimensional subspace $W$ of $\KK^m$.  So, for example, $\kappa(s)+1$ is the dimension of the space of matrices with a zero block as shown below in (\ref{fig:compression}).   These linear spaces were studied first in \cite{eisenbud:88a} and are very important to our analysis; see Definition~\ref{defn:compression}.  Next, $\delta(s)$ is simply the dimension of the product of Grassmannians $\Gr(s+n-r+1,n)\times\Gr(s,m)$ that parametrizes our choices of $V$ and $W$.

\begin{equation}\label{fig:compression}
\bordermatrix{ & s\!+\!1\!+\!n\!-\!r & r\!-\!s\!-\!1 \cr
& 0\enskip0\enskip0\enskip0\enskip0 & \qquad\cr
m\!-\!s & 0\enskip0\enskip0\enskip0\enskip0 & \cr
& 0\enskip0\enskip0\enskip0\enskip0 & \cr
& & \cr
\quad s& & \cr
& & 
}\end{equation}
\\

Our first theorem on Fano schemes of determinants is a natural generalization of \cite[Corollary 2.2]{eisenbud:88a} and gives the complete picture in the case $k=1$ of the Fano scheme of lines.
\begin{theorem}\label{thm:lines}
  The Fano scheme $\bF_1(\dmn)$ has  exactly $r$ irreducible components, of dimensions
  $$
\delta(s)+2(\kappa(s)-1) \qquad 0\leq s \leq r-1.
  $$
  In particular if $m=n$, then all irreducible components of $\bF_1(D_{n,n}^r)$ have dimension $(n-r)(r-2)+2nr-n-5.$  If $r>2$, then all components intersect pairwise.  Furthermore, if $m=n=r$, then $\bF_1(D_{n,n}^n)$ has the expected dimension and is a reduced local complete intersection.
\end{theorem}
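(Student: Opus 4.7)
The plan is to construct $r$ irreducible closed subschemes $C_0, \ldots, C_{r-1}$ of $\bF_1(\dmn)$ from compression spaces, and then to show they are precisely the irreducible components. For each $0 \leq s \leq r-1$, a pair $(V,W)$ with $V \subset \KK^n$ of dimension $s+n-r+1$ and $W \subset \KK^m$ of dimension $s$ determines a linear subspace $\mcL_{V,W} \subset \KK^{mn}$ of matrices sending $V$ into $W$. Every matrix in $\mcL_{V,W}$ has rank at most $s + (r-s-1) = r-1$, so $\PP(\mcL_{V,W}) \subset \dmn$, and by the interpretation of $\kappa$ its projective dimension is $\kappa(s)$. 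Form the incidence variety
\[
I_s = \{(V,W,L) : L \subset \PP(\mcL_{V,W})\} \subset \Gr(s+n-r+1, n) \times \Gr(s, m) \times \bF_1(\dmn),
\]
which is a Grassmannian bundle over $\Gr(s+n-r+1, n) \times \Gr(s, m)$ with fibre $\Gr(2, \kappa(s)+1)$ of dimension $2(\kappa(s)-1)$. Hence $I_s$ is smooth and irreducible of dimension $\delta(s) + 2(\kappa(s)-1)$, and its image $C_s \subset \bF_1(\dmn)$ is closed and irreducible.

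The crux of the theorem is the exhaustion statement: every line $L \subset \dmn$ lies in $\PP(\mcL_{V,W})$ for some such $(V,W)$. This is the main obstacle, as it amounts to showing that every two-dimensional pencil of $m \times n$ matrices of rank strictly less than $r$ admits a common compression. The plan is to adapt the argument of \cite[Corollary 2.2]{eisenbud:88a}, which handles the analogous classification for individual rank-deficient matrices, by working with the generic matrix on $L$ together with its specializations and extracting a common $(V,W)$. Once $\bF_1(\dmn) = \bigcup_s C_s$ is established, a dimension count shows the projection $I_s \to C_s$ is generically finite, since the pair $(V,W)$ is recovered from the generic matrix on $L$ up to finite data; hence $\dim C_s = \delta(s) + 2(\kappa(s)-1)$. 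The $C_s$ are genuinely distinct components because discrete invariants of a generic pencil along a line in $C_s$, such as the dimension of the span $\sum_{t} M_t(\KK^n) \subset \KK^m$, depend on $s$.

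For the auxiliary statements: when $m = n$ a direct algebraic simplification of $\delta(s) + 2(\kappa(s)-1)$ collapses to $(n-r)(r-2) + 2nr - n - 5$, independent of $s$. When $r > 2$, given $s < s'$ in $\{0, \ldots, r-1\}$, we choose flags $V \subset V' \subset \KK^n$ and $W \subset W' \subset \KK^m$ of the required dimensions and consider the linear intersection $\mcL_{V,W} \cap \mcL_{V',W'}$. Parametrizing its matrices by their restrictions to a compatible decomposition of $\KK^n$ shows this linear space has projective dimension at least one, so it contains a line $L$ belonging to both $C_s$ and $C_{s'}$.

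Finally, suppose $m = n = r$, so that $\dmn$ is a degree-$n$ hypersurface in $\PP^{n^2 - 1}$. The expected codimension of $\bF_1(D_{n,n}^n)$ in $\Gr(2, n^2)$ is $n+1$, giving expected dimension $2(n^2 - 2) - (n+1) = 2n^2 - n - 5$, which matches the value of $\delta(s) + 2(\kappa(s)-1)$ computed above. Consequently the $n+1$ defining equations cut out a scheme of the expected codimension, so $\bF_1(D_{n,n}^n)$ is a local complete intersection, and in particular Cohen--Macaulay. Reducedness then follows by exhibiting a smooth point on each component $C_s$, for instance a generic line inside a generic $\PP(\mcL_{V,W})$, where the tangent-space calculation is controlled by the entire linear space $\mcL_{V,W} \subset \dmn$.
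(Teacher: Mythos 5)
Your construction of the subschemes $C_s$ and the overall architecture (exhaustion by compression spaces, distinctness, dimension count, and the local-complete-intersection argument when $r=m=n$) follow the same route as the paper, but the proposal does not prove the step you yourself call the crux. The exhaustion statement --- every pencil of matrices of rank less than $r$ lies in a single compression space --- is left as a ``plan to adapt'' Eisenbud--Harris, and the sketch (``work with the generic matrix on $L$ together with its specializations and extract a common $(V,W)$'') does not constitute an argument: a single matrix of rank $r-1$ lies in a positive-dimensional family of compression spaces and determines no preferred $(V,W)$, and the statement is genuinely special to pencils, since it already fails for $k=2$ (the $3\times 3$ antisymmetric matrices give the extra component $\comp^*$ of $\bF_2(D_{3,3}^3)$), so any proof must exploit the pencil structure. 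The paper obtains exhaustion from the Kronecker canonical form of a pencil of matrices \cite[XII.4]{gantmacher}, which is exactly the classification your plan would have to reproduce; without this input the count of exactly $r$ components is unsupported.

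Two further ingredients are asserted rather than proved, and both are nontrivial in the paper. First, the equality $\dim C_s=\delta(s)+2(\kappa(s)-1)$ needs generic finiteness of $I_s\to C_s$, and reducedness needs a point on each component at which the tangent space of $\bF_1(D_{n,n}^n)$ itself (not merely of $C_s$) has dimension $2n^2-n-5$; in the paper these are Theorem \ref{thm:rho} and Theorem \ref{thm:ts}, the latter being a substantial computation that does not follow from the remark that the line sits inside $\PP(\mcL_{V,W})\subset \dmn$ --- at special points of $\comp_1(s)$ the tangent space is in fact strictly larger. Second, your proposed discrete invariant for distinctness, the dimension of $\sum_t M_t(\KK^n)$, equals $\min\{m,\,2r-s-2\}$ for a generic pencil in $C_s$, so it fails to separate different values of $s$ whenever both are capped at $m$ (already for $r=m=n\geq 3$ it does not distinguish $s=0$ from $s=1$); you need either an explicit line as in Proposition \ref{prop:distinct} or the tangent-space argument showing each $\comp_1(s)$ is itself a component. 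The remaining pieces --- the $m=n$ simplification, the pairwise intersections for $r>2$, and the deduction that a local complete intersection which is generically reduced is reduced --- are correct and agree with the paper.
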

\noindent We prove this theorem in Section~\ref{sec:lines}.  We also explicitly calculate the degree of $\bF_1(D_{n,n}^n)$, as a subscheme of $\Gr(2,n^2)$ in its Pl\"ucker embedding, for $n$ up to 6. 
See Proposition \ref{prop:deglines}.

For higher values of $k$, we do not have a complete characterization of $\fd$, but we can still say a lot about these schemes.
The first thing we should note is that the Fano scheme $\bF_k(\dmn)$ is nonempty if and only if $k < (r-1)n$:  this follows from a result of Dieudonn\'e \cite{dieudonne} (see also \S\ref{sec:torus} for a quick proof).  For these values of $k$, we can say exactly which of the schemes $\fd$ are smooth and irreducible.  

\begin{theorem}\label{thm:smooth}
  Let $1\leq k <(r-1)n$. 
  \begin{enumerate}[(a)]
\item The Fano scheme $\bF_k(\dmn)$ is smooth if and only if $k>(r-2)n.$ 

\item The Fano scheme $\bF_k(\dmn)$ is irreducible if and only if $m\neq n$ and \newline $k>(r-2)n+m-r+1$. 
\end{enumerate}
\end{theorem}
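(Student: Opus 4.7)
My plan is to analyze $\fd$ via its decomposition into compression-type subvarieties, combined with the $T = (\KK^*)^{m+n}/\KK^*$ action on $\PP^{mn-1}$ by row and column scaling. For $0 \leq s \leq r-1$, let $C_s \subseteq \fd$ denote the closed subvariety of $k$-planes contained in some compression space $L(V,W) = \{A : A(V) \subseteq W\}$ with $(V,W) \in \Gr(s+n-r+1,n) \times \Gr(s,m)$. Building on the compression-space classification used for Theorem~\ref{thm:lines}, I will argue that every $k$-plane on $\dmn$ lies in some $C_s$, and that $C_s$ is nonempty exactly when $\kappa(s) \geq k$. Because $\kappa(s)$ is a convex quadratic with endpoint values $\kappa(r-1) = (r-1)n - 1$ (the largest when $m \leq n$) and $\kappa(r-2) = (r-2)n + m - r + 1$ (the next largest), these two indices will control the dichotomies in both parts of the theorem.

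For part (a), I show smoothness when $k > (r-2)n$ by establishing pairwise disjointness of the nonempty components together with smoothness of each. The pivotal bound is
\[
\dim\bigl(L(V,W) \cap L(V',W')\bigr) \leq (r-2)n + 1
\]
for distinct compression pairs, with equality attained exactly when $s = r-2$, $s' = r-1$, and $W \subseteq W'$; this forces no $(k+1)$-plane to lie in such an intersection once $k > (r-2)n$. Each $C_s$ is smooth via its parametrization by the Grassmannian bundle $\tilde C_s = \{(V,W,\Lambda) : \Lambda \subseteq L(V,W)\}$ over $\Gr(s+n-r+1,n) \times \Gr(s,m)$; in the smooth range the map $\tilde C_s \to C_s$ is an isomorphism because $\Lambda$ uniquely determines $(V,W)$ by kernel-intersection and image-span. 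Conversely, when $k \leq (r-2)n$ the intersection bound is attained, so $C_{r-2} \cap C_{r-1}$ contains a $T$-fixed coordinate $k$-plane $\Lambda$; the local ring of $\fd$ at $\Lambda$ then has at least two minimal primes, so $\fd$ is singular at $\Lambda$.

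For part (b), the convexity of $\kappa$ implies that when $m < n$ and $k > \kappa(r-2)$, the only nonempty component is $C_{r-1}$, which is irreducible by its incidence presentation; this yields the forward direction. For the converse, if $m < n$ and $k \leq \kappa(r-2)$ then $C_{r-2}$ and $C_{r-1}$ are both nonempty and distinct (a generic $\Lambda \in C_{r-2}$ has matrix images spanning more than $r-1$ dimensions), so $\fd$ is reducible. If instead $m = n$, the transposition $A \mapsto A^\top$ is an automorphism of $\dmn$ swapping the image-compression component $C_{r-1}$ with the kernel-compression component $C_0$; since $\kappa(0) = \kappa(r-1) = (r-1)n - 1$, both components are nonempty for every $k < (r-1)n$, witnessing reducibility throughout the range.

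The main technical obstacle will be the uniform intersection bound $\dim(L(V,W)\cap L(V',W')) \leq (r-2)n + 1$ across all pairs $(s,s')$ of compression parameters, not merely adjacent ones; this requires case analysis on the relative positions of $(V,V')$ in $\KK^n$ and $(W,W')$ in $\KK^m$. A secondary delicacy is the uniqueness of $(V,W)$ given $\Lambda$ in the smooth range, to be verified by recovering $V$ as an intersection of kernels and $W$ as a span of images of matrices in $\Lambda$ once $\dim \Lambda$ is sufficiently large.
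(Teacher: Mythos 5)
There is a genuine gap, and it sits at the foundation of your plan. You propose to prove smoothness (and irreducibility) by showing that the compression loci $C_s$ are smooth and pairwise disjoint, having first argued that ``every $k$-plane on $\dmn$ lies in some $C_s$'' by building on the classification used for Theorem~\ref{thm:lines}. That classification is the Kronecker canonical form of a \emph{pencil} of matrices, so it only covers $k=1$; for $k\geq 2$ the statement is false in general (the $2$-plane of $3\times 3$ antisymmetric matrices gives a component $\comp^*$ of $\bF_2(D^3_{3,3})$ not contained in any compression space, cf.\ Section~\ref{sec:3} and \cite[Theorem 1.1]{eisenbud:88a}). In the range $k>(r-2)n$ the set-theoretic decomposition into compression components does hold (Corollary~\ref{cor:disjoint}), but in the paper it is a \emph{consequence} of smoothness, proved via tangent-space bounds; assuming it as an input is circular, and the external classification results that could substitute (Remark~\ref{rem:pazzis}) only cover $k\geq\kappa(r-2)$, not all of $k>(r-2)n$.

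The second, equally serious problem is that even granting the decomposition and the disjointness of the $C_s$, smoothness of $\fd$ as a \emph{scheme} does not follow: you never bound the tangent spaces of $\fd$ itself, and these Fano schemes can be non-reduced (the paper shows $\bF_2(D^3_{3,3})$ is non-reduced, and $\bF_4(P^3_{3,3})$ even has embedded components), so knowing that the reduced subvarieties $C_s$ are smooth and disjoint says nothing about $\dim T_{[\Lambda]}\fd$. This is exactly what the deformation-theoretic computation of Theorem~\ref{thm:ts} provides: under the bounds \eqref{eq:smootha}--\eqref{eq:smoothb} (which hold whenever $k>(r-2)n$), the tangent space of $\fd$ at \emph{every} point of $\comp_k(s)$ has dimension equal to $\dim\comp_k(s)$ (Corollary~\ref{cor:comp}); since the singular locus is closed and torus-invariant and every torus fixed point lies in some $\comp_k(s)$ (Proposition~\ref{prop:fixed}), smoothness follows. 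The same mechanism--every irreducible component is projective and torus-invariant, hence contains a torus fixed point (Remark~\ref{rem:intersect})--is what rules out non-compression components in part (b); your argument for the forward direction of (b) again silently assumes there are none. The parts of your proposal that do survive are the singular/reducible directions: exhibiting a point common to $\comp_k(r-1)$ and $\comp_k(r-2)$ when $k\leq(r-2)n$, and using $\kappa(r-2)$, $\kappa(0)=\kappa(r-1)$ when $m=n$, is essentially what the paper does, but even there you need to know the $\comp_k(s)$ are genuinely distinct irreducible \emph{components} (Corollary~\ref{cor:comp} and Proposition~\ref{prop:distinct}), which again rests on the tangent-space bound you have not supplied.
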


\noindent See \S\ref{subsec:smooth1} for the proof.

Our next main result says exactly when when $\bF_k(D^m_{m,n})$ is connected.
\begin{theorem}\label{thm:connected}
  Suppose that $1\leq k <(m-1)n$. 
  Then $\bF_k(D_{m,n}^m)$ is disconnected if and only if
$$m^2-2m < k \leq \kappa(0)$$ 
\noindent	or if there exists an integer $s$ with $0<s<m-1$ such that 
 $$\kappa(s)-\min\{m\!-\!s\!-\!1,n\!-\!m\!+\!s\} \,<\,    k    \,\leq\, \kappa(s). $$
  \end{theorem}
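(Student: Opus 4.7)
The plan is to decompose $\bF_k(D_{m,n}^m)$ using the compression spaces of Definition~\ref{defn:compression} and then to analyze the graph of pairwise intersections of the resulting subschemes. For each $0\leq s\leq m-1$, I would let $C_s\subseteq\bF_k(D_{m,n}^m)$ be the closed image of the incidence variety parametrizing pairs $(\Pi,(V,W))$ with $(V,W)\in\Gr(s+n-m+1,n)\times\Gr(s,m)$ and $\Pi$ a $k$-plane inside the associated type-$s$ compression space. This incidence variety is a $\Gr(k+1,\kappa(s)+1)$-bundle over a connected base, hence $C_s$ is connected whenever non-empty, and non-emptiness holds iff $k\leq\kappa(s)$. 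Dieudonn\'e's theorem (cf.~\S\ref{sec:torus}) gives $\bF_k(D_{m,n}^m)=\bigcup_s C_s$, so connectedness of $\bF_k(D_{m,n}^m)$ is equivalent to connectedness of the graph $G$ on $\{s:C_s\neq\emptyset\}$ whose edges record non-trivial pairwise intersections.

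The next step is the intersection computation. For $0\leq s<s'\leq m-1$ and parameters $(V,W),(V',W')$ of the respective types, the nested configuration $V\subseteq V'$, $W\subseteq W'$ admits an intersection which, in coordinates adapted to both compressions, is itself a linear space of projective dimension $\kappa(s)-(m-s')(s'-s)$; a rank count then shows that non-nested configurations give strictly smaller intersections. Thus $(s,s')$ is an edge of $G$ iff $k\leq\kappa(s)-(m-s')(s'-s)$.

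The key structural observation is that $j\mapsto(m-s-j)j$, regarded on $\{1,\dots,m-s-1\}$, is an upward-opening quadratic attaining its minimum value $m-s-1$ at both endpoints $j=1$ and $j=m-s-1$. Consequently the adjacent right-edge $(s,s+1)$ and the extreme right-edge $(s,m-1)$ share the common threshold $\kappa(s)-(m-s-1)$, and whenever $k$ exceeds this threshold $C_s$ has no right-edges in $G$. The symmetric left-direction analysis yields the threshold $\kappa(s)-(n-m+s)$. Therefore $C_s$ is isolated in $G$ iff $k>\kappa(s)-\min\{m-s-1,n-m+s\}$ (with only one direction available at $s=0$ or $s=m-1$). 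Applying this to the minimum-index vertex of any non-trivial connected component of $G$ forces that component to contain $m-1$; hence $G$ is disconnected exactly when some $C_s$ is isolated.

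Translating the isolation conditions to the theorem: $C_0$ is isolated iff $k>\kappa(0)-(m-1)=m^2-2m$, which together with $k\leq\kappa(0)$ is condition~(1); and for $0<s<m-1$, $C_s$ is isolated iff $k>\kappa(s)-\min\{m-s-1,n-m+s\}$, which together with $k\leq\kappa(s)$ is condition~(2). The endpoint $s=m-1$ gives isolation iff $k>\kappa(m-1)-(n-1)=n(m-2)$; this range lies within the irreducible regime of Theorem~\ref{thm:smooth}(b) when $m\neq n$, and coincides with condition~(1) when $m=n$ by the transposition symmetry of $D_{n,n}^n$. Since $\kappa(m-1)\geq\kappa(s)$ for all $s$, a non-empty $C_{m-1}$ is always available to be separated from an isolated $C_s$, so the disconnection is genuine. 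The hard part will be the rank count ruling out non-nested configurations in the intersection step, together with the bookkeeping needed to match the one-sided endpoint cases to the theorem's conditions.
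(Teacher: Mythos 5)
Your combinatorial analysis of when compression pieces meet reproduces the paper's numerical thresholds correctly (for adjacent types $s,s+1$ the intersection drops by $m-s-1$, and the convexity bookkeeping matches the proof of Theorem~\ref{thm:connecteddet}), but there is a genuine gap at the very first step, and it infects the harder (``disconnected'') direction of the theorem. The asserted equality $\bF_k(D_{m,n}^m)=\bigcup_s C_s$ is false, and Dieudonn\'e's theorem does not give it: Dieudonn\'e is invoked in the paper only for non-emptiness, and already $\bF_2(D_{3,3}^3)$ (which is in the range $1\leq k<(m-1)n$ of the theorem) contains the non-compression component $\comp^*$ of planes equivalent to the antisymmetric $3\times 3$ matrices, cf.\ Section~\ref{sec:3} and \cite[Theorem 1.1]{eisenbud:88a}. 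What is true, and what the paper uses (Remark~\ref{rem:intersect}), is only that every irreducible component of $\fd$ contains a torus fixed point and hence \emph{meets} some $\comp_k(s)$; this weaker statement suffices for the ``if the inequalities fail then connected'' direction, but not for yours.

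Because of this, your inference ``$C_s$ isolated in the intersection graph $G$ $\Rightarrow$ $\bF_k(D_{m,n}^m)$ disconnected'' is unjustified: even if $C_s$ meets no other compression piece, an unknown non-compression irreducible component could intersect $C_s$ and also some $\comp_k(s')$, keeping the Fano scheme connected. The paper closes exactly this hole with the tangent-space computations: under the inequalities $k>\kappa(s)-(m-s-1)$ and $k>\kappa(s)-(n-m+s)$ (which are precisely the thresholds in the statement), Theorem~\ref{thm:ts}(ii) and Corollary~\ref{cor:comp} show that \emph{every} point of $\comp_k(s)$ is a smooth point of $\fd$ of dimension $\dim\comp_k(s)$, so $\comp_k(s)$ is open and closed and is therefore its own connected component; disconnectedness then follows since $\comp_k(m-1)$ is a distinct component. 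Your proposal never invokes these deformation-theoretic bounds, so the ``disconnected'' direction is not proved; the coincidence of your graph-theoretic thresholds with the smoothness thresholds is what makes the theorem an ``if and only if,'' but the graph computation alone does not substitute for the smoothness input. (Two smaller points: the quadratic $j\mapsto(m-s-j)j$ is downward-opening, though your endpoint conclusion is right; and the deduction that disconnectedness of $G$ forces some isolated vertex, rather than two non-trivial pieces, needs the convexity argument spelled out as in the paper's analysis of the set $S$.)
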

  
\noindent See \S\ref{subsec:smooth1} for the proof.  Table~\ref{table:det} illustrates the results of Theorems~\ref{thm:smooth} and~\ref{thm:connected} for $\bF_k(D_{n,n}^n)$ for $n\leq 10$.

\begin{table}
  \begin{center}
    \begin{tabular}{c c c c c c c c c c  }  
\hline
      $n$& $2$& $3$& $4$& $5$& $6$& $7$ & $8$ & $9$ & $10$\\
\hline
\noalign{\vskip .5mm}
      Non-empty iff $k\leq$&  $1$&$5$&$11$&$19$&$29$&$41$&$55$&$71$&$89$   \\
\noalign{\vskip .5mm}
 \hline
 \noalign{\vskip .5mm}
      Singular iff $k\leq$& -- &$3$& $8$& $15$& $24$&$35$&$48$ &$63$ & $80$    \\
\noalign{\vskip .5mm}
\hline
\noalign{\vskip .5mm}
      Connected iff $k\leq$&   -- &$3$&$8$&$13$&$21$&$29$&$40$  &$51$& $65$       \\
      or $k=$& &&&&$24$&$35$&$46$--$48$&$57,60$--$63$&$72,73,76$--$80$\\
      \noalign{\vskip .5mm}
      \hline
    \end{tabular}

\vspace{.5cm}

  \end{center}\caption{Properties of the Fano scheme $\bF_k(D_{n,n}^n)$  for $n\leq 10$}\label{table:det}
\end{table}

Theorem~\ref{thm:connected} is very surprising.  It says that connectivity of $\bF_k(D_{m,n}^m)$  can actually be highly non-monotonic as $k$ varies from $1$ to $(m-1)n-1$, since each of the $m-2$ 
values of $s$ above cuts out an interval of values of $k$ for which $\bF_k(D_{m,n}^m)$ is disconnected.  The last row of Table~\ref{table:det} gives examples of this behavior.  
That said, $\bF_k(D_{m,n}^m)$ is always connected if $k$ is sufficiently small, as we show in  Corollary~\ref{cor:connected}.

As for connectedness of $\fd$ when $r<m$, we can still give some necessary conditions and sufficient conditions, although they are not strong enough to completely characterize when $\fd$ is connected.   See Theorem \ref{thm:connecteddet}.

In the next section, we will define a family of special irreducible components $\comp_k(s)$ of $\fd$ that we  call {\em compression components}.  These components are very important to our analysis of $\fd$.  Indeed, we will see that if $k=1$ or if $k$ is sufficiently large then $\fd$ consists {\em only} of compression components (Theorem~\ref{thm:lines} and Corollary~\ref{cor:disjoint} respectively). 

For general $k$, however, other components may appear, as detected in \cite[Theorem 1.1]{eisenbud:88a}.  For example, we will see in Section~\ref{sec:3} that a non-compression component already appears in $\bF_2(D^3_{3,3})$: it is the component $\comp^*$ of $2$-planes of matrices $(\GL_3\times \GL_3)$-equivalent to the space of $3\times 3$ antisymmetric matrices.  On the other hand, the compression components still have the desirable property that  
every component of $\fd$ must meet one of them (Remark~\ref{rem:intersect}).  This fact makes it very useful to study local neighborhoods of points on these components.

\subsection{The Fano scheme $\bF_k(\pmn)$}\label{subsec:perm}
The study of 
$\bF_k(\pmn)$ is slightly more delicate due to the lack of $\GL_m\times \GL_n$ symmetry. Nonetheless, we can prove several results on $\bF_k(\pmn)$ that have a similar flavor to those for $\bF_k(\dmn)$.  (When we discuss $\pmn$, we will always assume that $\chara (\KK)\neq 2$, since otherwise $\pmn=\dmn$ and our above results apply.)

For example, just as in the determinantal case, $\bF_k(\pmn)$ is nonempty if and only if $k<(r-1)n$ (Proposition~\ref{prop:nonempty}). 
Furthermore, in that range, we can completely characterize when  $\bF_k(\pmn)$ is smooth:
\begin{theorem}\label{thm:smoothp}
  Let $1\le k<(r-1)n$.
The Fano scheme $\bF_k(\pmn)$ is smooth if and only if $n=2$ or if $$k> (r-2)n+1.$$ 
\end{theorem}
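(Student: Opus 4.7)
The plan mirrors that of Theorem~\ref{thm:smooth}(a) but must accommodate the much smaller symmetry group of $\pmn$. The argument splits into three cases: $n=2$; sufficiency when $n\ge 3$ and $k>(r-2)n+1$; and necessity when $n\ge 3$ and $k\le(r-2)n+1$.

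\textbf{The case $n=2$.} Since $1<r\le m\le n=2$, we must have $r=m=n=2$, so $P_{2,2}^2$ is the quadric hypersurface $x_{1,1}x_{2,2}+x_{1,2}x_{2,1}=0$ in $\PP^3$, which is smooth because $\chara\KK\neq 2$. The Fano schemes of linear subspaces on a smooth quadric in $\PP^3$ are classically smooth for all admissible $k$ (the scheme of points is the quadric itself, and the scheme of lines is a disjoint union of two smooth $\PP^1$'s), so there is nothing to show.

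\textbf{Sufficiency when $k>(r-2)n+1$ and $n\ge 3$.} For each pair $(T,S)$ consisting of a row set $T\subset[m]$ of size $m-s$ and a column set $S\subset[n]$ of size $s+n-r+1$, where $0\le s\le r-1$, let $W_{T,S}\subset\PP^{mn-1}$ denote the linear subspace of matrices vanishing on the block $T\times S$; this subspace has dimension $\kappa(s)$. A pigeonhole argument on $r\times r$ submatrices shows $W_{T,S}\subset\pmn$, so the Grassmannian $\Gr(k+1,\kappa(s)+1)$ embeds as a smooth closed subscheme of $\bF_k(\pmn)$, which plays the role of a permanental compression component. The main structural step is to prove that, under the hypothesis $k>(r-2)n+1$, every $k$-plane $L\subset\pmn$ lies inside some $W_{T,S}$. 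With this in hand, smoothness at $[L]$ follows from a direct tangent-space computation using the identity that the partial derivative of an $r\times r$ permanent with respect to $x_{i,j}$ equals the $(r-1)\times(r-1)$ permanent obtained by striking row $i$ and column $j$; one then checks that every first-order deformation of $L$ inside $\pmn$ must in fact stay inside $W_{T,S}$, giving the expected tangent space dimension $(k+1)(\kappa(s)-k)$.

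\textbf{Necessity when $n\ge 3$ and $k\le(r-2)n+1$.} We exhibit a singular point. The natural candidate is a $k$-plane $L_0$ lying in the intersection $W_{T,S}\cap W_{T',S'}$ of two distinct compression subspaces with overlapping zero-block patterns, chosen so that $[L_0]$ is a smooth point of each individual compression component separately, yet the first-order deformations of $L_0$ inside $\pmn$ span a tangent space strictly larger than any single component dimension. The constraints $n\ge 3$ and $k\le(r-2)n+1$ are precisely what is needed to arrange such $(T,S)\neq(T',S')$ whose intersection contains a $k$-plane. An explicit tangent-space computation using the same permanent-derivative identity then displays a first-order deformation killed by every $r\times r$ permanent but not tangent to either individual compression component.

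The principal obstacle is the structural claim in the sufficiency step: that every $k$-plane in $\pmn$ is forced into a compression subspace whenever $k>(r-2)n+1$. Without the $\GL_m\times\GL_n$ action that is available in the determinantal argument, we cannot reduce matrices in $L$ to a normal form, so the argument must proceed by directly examining the zero patterns on entries of a general member of $L$ forced by simultaneous vanishing of all $r\times r$ permanents. The sharp shift from the determinantal threshold $(r-2)n$ up to the permanental threshold $(r-2)n+1$ must arise from this loss of symmetry, and making this quantitative is the delicate part of the proof.
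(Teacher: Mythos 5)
Your case $n=2$ is fine, but both of the other two steps have genuine gaps. For sufficiency, your plan rests on the structural claim that for $k>(r-2)n+1$ \emph{every} $k$-plane in $\pmn$ lies in some standard compression space $W_{T,S}$, and you acknowledge you do not know how to prove it. In the paper this statement is a \emph{consequence} of smoothness (Corollary~\ref{cor:permk}), not an ingredient: the paper never proves it directly. Instead it observes that the singular locus of $\fp$ is closed and invariant under $T_m\times T_n$, so if nonempty it contains a torus fixed point; by Proposition~\ref{prop:fixed} every torus fixed point lies in a standard compression subscheme $\comp_k(\sigma,\tau)$, so smoothness only has to be checked there. That reduction is what makes the argument go through without the global claim you are missing. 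Moreover, your "one then checks" tangent-space step is exactly the technical heart of the paper: bounding $\dim T_\eta\fp$ by $(k+1)(\kappa(s)-k)$ at \emph{every} point of $\comp_k(\sigma,\tau)$ is Theorem~\ref{thm:tsperm}(ii), which requires the special tridiagonal normal forms, Lemmas~\ref{lemma:abc}, \ref{lemma:rowperm}, \ref{lemma:2case}, \ref{lemma:permterms} and the combinatorial Lemma~\ref{lemma:stars}, together with verifying the numerical conditions \eqref{eq:smoothap}--\eqref{eq:smoothbp} via convexity of $\kappa(s)-(m-s-2)$; it is not a routine verification from the permanent-derivative identity.

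For necessity your construction cannot reach the boundary of the stated range. A count of zero patterns shows that two distinct standard compression spaces intersect in a projective linear space of dimension at most $(r-2)n$ (the maximum is attained by nesting an $(m-r+1)\times n$ block inside an $(m-r+2)\times(n-1)$ block), so for $k=(r-2)n+1$ there is \emph{no} $k$-plane lying on two distinct $W_{T,S}$'s; this is precisely why the permanental singularity threshold sits one higher than the determinantal one, where the two-component trick does work. Likewise for $r=2$, $k=1$, $n\ge 3$ no line lies in two distinct standard compression spaces at all. The paper's singular points are of a different nature: for $k\ge 2$ it takes a $k$-plane inside a \emph{single} component $\comp_k(\sigma,\tau)$ with $|\tau|=r-1$, whose first $m-r+1$ rows vanish and whose next row is $(0,\dots,0,z_0,z_1)$, and exhibits an extra permanent-specific first-order deformation (replacing that row pattern by $(\varepsilon z_0,-\varepsilon z_1)$ in the row above), exceeding the component dimension supplied by Proposition~\ref{prop:perm}; for $k=1$, $r>2$ it compares tangent dimensions at two points of the same compression subscheme, and for $k=1$, $r=2$ it must go to second order, exhibiting a first-order deformation with a nonliftable obstruction. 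Your sketch addresses none of these cases, so the necessity direction as proposed would fail exactly in the range that distinguishes the permanent from the determinant.
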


\noindent However, if $\bF_k(\pmn)$ is nonempty, it is {\em never} irreducible, as we prove in Proposition \ref{prop:irredperm}.   See Table~\ref{table:perm} for a summary of our results applied to the case of the $P^n_{n,n}$ for $n\le 9$.  

We can also give necessary conditions and sufficient conditions for the connectedness of $\fp$: see Theorem \ref{thm:connectedp}. In particular, we show that if $k$ is sufficiently small, then $\fp$ is connected (Corollary~\ref{cor:connectedp}). However, we do not know how to completely characterize when $\bF_k(\pmn)$ is connected, even in the case  $r=m$.

 As in the case of the determinant, there is a family of {\em compression subschemes} $\comp_k(\sigma,\tau)$ of $\fp$, defined in Section~\ref{sec:torus}, that again play an important role in our analysis.  For example, any irreducible component of $\bF_k(\pmn)$ must intersect a subscheme of the form $\comp_k(\sigma,\tau)$: see Remark \ref{rem:intersect}.  We prove in Proposition~\ref{prop:perm} that these subschemes are (with a few necessary exceptions) actually components; and we show (Corollary~\ref{cor:permk}) that if $k$ is large enough then $\fp$ is simply a disjoint union of 
 compression components.

 In general, however, $\fp$ has more components than just those of the form $\comp_k(\sigma,\tau)$. In Section~\ref{sec:threetimesthreep}, we consider in depth the example of $\bF_k(P_{3,3}^3)$ for various values of $k$. In particular, we give a complete description of $\bF_4(P^3_{3,3})$: it consists of nine $\PP^1\times \PP^1$'s, six $\PP^5$'s, 18 Hirzebruch surfaces $\mathcal{F}_1$, and 36 embedded fat points, arranged into a total of three connected components.  See Proposition~\ref{prop:f4p333}.  In particular, we'll see that $\bF_4(P^3_{3,3})$ has components not of the form $\comp_4(\sigma,\tau)$, and that this Fano scheme can have embedded primary components.

\begin{table}
  \begin{center}
    \begin{tabular}{c c c c c c c c c }  
    \hline
      $n$& $2$& $3$& $4$& $5$& $6$& $7$ & $8$ & $9$ \\
\hline
\noalign{\vskip .5mm}
      Non-empty iff $k\leq$&  $1$&$5$&$11$&$19$&$29$&$41$&$55$&$71$   \\
\noalign{\vskip .5mm}
 \hline
 \noalign{\vskip .5mm}
      Singular iff $k\leq$& -- &$4$& $9$& $16$& $25$&$36$&$49$ &$64$     \\
\noalign{\vskip .5mm}
\hline
\noalign{\vskip .5mm}
      Connected if $k\leq$&   -- &$3$&$8$&$13$&$21$&$29$  &$40$& $51$       \\
      or $k=$& &&&&$24$&$34$--$35$&$46$--$48$&$57,60$--$63$\\
      \noalign{\vskip .5mm}
      \hline
 \noalign{\vskip .5mm}
      Disconnected if $k=$& $1$ &$4$,$5$& 10,11& 17-19& 26--29&37--41& $42,43,45$ &$53$--$56$,    \\
         & &&&&  & &$50$--$55$&$59,65$--$71$\\            
\noalign{\vskip .5mm}
\hline
    \end{tabular}
\vspace{.5cm}

  \end{center}\caption{Properties of the Fano scheme $\bF_k(P_{n,n}^n)$  for $n\leq 9$}\label{table:perm}
\end{table}

\subsection{Related work and organization}
Let us now mention some related research.
Fano schemes have been studied in a variety of contexts. The first modern treatment was given in \cite{altman:77a}, where it was proven that the Fano scheme of lines on a smooth cubic hypersurface of dimension at least three is smooth and connected. Sufficient criteria for smoothness and connectedness of Fano schemes of lines on hypersurfaces were given in \cite{barth:81a}, and generalized to higher dimensional linear spaces in \cite{langer:97a}. While Fano schemes for generic hypersurfaces $X$ always have dimension equal to the so-called expected dimension, it was proven in \cite{harris:98a} that this is also true for all smooth hypersurfaces of sufficiently low degree. General properties of Fano schemes of linear spaces on complete intersections have been studied in \cite{debarre:98a}. 

In the  case $r=m=n$, $D_{n,n}^n$ and $P_{n,n}^n$ are both irreducible hypersurfaces. However, the above-mentioned results do not apply to these hypersurfaces, since they are not  general, and their degrees are not sufficiently low. 

There has also been considerable work classifying the $\GL_m\times \GL_n$-equivalence classes of linear spaces of non-full rank $m\times n$ matrices, i.e.~the $\GL_m\times \GL_n$ orbits in the Fano schemes $\bF_k(\dmn)$. Such a classification exists for $k$ large relative to $m,n,$ and $r$ \cite{beasley:87a, pazzis}, and for $r\leq 4$ \cite{atkinson:83a, eisenbud:88a}. These classifications only apply to a limited range of $k,m,n,$ and $r$, however, and do not fully describe the geometry of $\bF_k(\dmn)$. See also e.g.~\cite{draisma} for constructions of certain maximal linear spaces of singular matrices.
Linear spaces of skew-symmetric matrices have also been studied; see e.g. \cite{manivel:05a}, where $2$-planes of $6\times 6$ rank four skew-symmetric matrices have been classified.
We do not know of any previous work studying $\fp$.

We use two main tools to study $\bF_k(\dmn)$ and $\bF_k(\pmn)$. First, there are natural torus actions on both of these Fano schemes; we are able to get a lot of geometric information by studying the local structure of these Fano schemes at torus fixed points. Torus actions and fixed points are discussed in Section~\ref{sec:torus}; some of our local computations were carried out explicitly using the \texttt{Macaulay2} package \cite{ilten:versal} in Section~\ref{sec:3}. Our second main tool consists of using deformation theory to calculate tangent space dimensions at select points of the Fano schemes. We carry out these calculations in Section \ref{sec:tangent}.

In Section \ref{sec:compression}, we study compression spaces in more detail. 
We put a lot of pieces together in Section \ref{sec:smooth} to 
discuss irreducibility, smoothness, and connectedness, in particular completing the proofs of Theorems \ref{thm:smooth}, \ref{thm:connected}, and \ref{thm:smoothp}.

Section \ref{sec:lines} is concerned with the Fano scheme of lines: we prove Theorem \ref{thm:lines} and compute the degrees of $\bF_1(D_{n,n}^n)$ for $n\leq 6$. In Section \ref{sec:3}, we discuss the explicit examples of $\bF_k(D_{3,3}^3)$ and $\bF_k(P_{3,3}^3)$. In particular, we show that in general, neither $\bF_k(\dmn)$ nor $\bF_k(\pmn)$ are reduced.
In Section \ref{sec:conclusion}, we conclude with a comparison between the cases of determinants and permanents and present some conjectures and further questions on  the structure of our Fano schemes.

\bigskip

{\noindent {\bf Acknowledgments.}}
We would like to thank Allen Knutson, Luke Oeding, and Bernd Sturmfels for helpful discussions, and Bernd Sturmfels for suggesting this problem to us.  We are grateful to the referee for very detailed comments.  The first author was supported by a NSF Postdoctoral fellowship.  The second author was partially supported by an AMS-Simons travel grant.

\section{Compression spaces and torus fixed points}\label{sec:torus}

\subsection{Compression spaces}

The first thing we would like to do is to define some very important subschemes of $\fd$ and $\fp$ arising from {\em compression spaces.}

\begin{defn} [cf.   \cite{eisenbud:88a}]\label{defn:compression}
  Fix a natural number $0\leq s \leq r-1$.
An {\em $s$-compression space} is the space of all $m\times n$ matrices over $\KK$ sending a fixed $(s+1+n-r)$-dimensional subspace $V$ of $\KK^{n}$ to a fixed $s$-dimensional subspace $W$ of $\KK^m$.
\end{defn}  
\noindent See~\eqref{fig:compression} for an example.  Now, every $r$-dimensional subspace of $\KK^n$ meets $V$ in dimension at least $s+1$, so a matrix in an $s$-compression space ``compresses'' the image of each $r$-dimensional subspace into an $r-1$ dimensional subspace.  In other words, every matrix in a compression space has rank less than $r$, and we may view this compression space as a point of the Fano scheme $\bF_{\kappa(s)}(\dmn)$, where $\kappa$ was defined in~\eqref{eqn:kappa}.
The set of all $s$-compression spaces forms a closed subscheme $\comp(s)$ of $\bF_{\kappa(s)}(\dmn)$ of dimension $\delta(s)$.
In fact, we have that $\comp(s)$ is isomorphic to 
$$\Gr(s+n-r+1,n)\times\Gr(s,m).$$  
Indeed, the morphism sending a pair $(A,B)$ of $(s+n-r+1)$- and $s$-dimensional planes in $\KK^n$ and $\KK^m$ to the space of matrices which map $A$ to $B$ is bijective, and an inspection of the natural affine charts of $\bF_{\kappa(s)}(D_{m,n}^r)$ shows that this map is an isomorphism. In  Theorem \ref{thm:comp}, we will even compute the degree of $\comp(s)$ as a subscheme of the Grassmannian $\Gr(\kappa(s)+1, mn)$ in its Pl\"ucker embedding.

Next, for each $k\le \kappa(s)$, the subscheme $\comp(s)$ of $\bF_{\kappa(s)}(\dmn)$ induces a subscheme $\comp_k(s)$ of $\bF_{k}(\dmn)$ whose points correspond to the $k$-planes sitting inside some $s$-compression space.  More precisely, 
let $\U(s)$ denote the restriction of the universal bundle on $\bF_{\kappa(s)}(\dmn)$ to $\comp(s)$. 
Then there is a natural morphism
$$
\rho_k(s):\Gr(k+1,\U(s))\to \bF_k(\dmn)
$$
from the Grassmann  bundle $\Gr(k+1,\U(s))$ to $\fd$
sending a $(k+1)$-dimensional subspace of $\KK^{mn}$ to the corresponding point of $\bF_k(\dmn)$. We denote the image of this map by $\comp_k(s)$.  We call $\comp_k(s)$ an $s$-compression component of $\fd$.  We will see in Theorem~\ref{thm:rho} and Corollary~\ref{cor:comp} that $\comp_k(s)$ is in fact an irreducible component of $\fd$.

\vspace{\baselineskip}
\noindent {\bf The permanental case.}
   In contrast to the determinantal case, not every $s$-compression space consists of matrices with vanishing $r\times r$ permanents. However, the ones that actually correspond to matrices with a fixed $(s+1+n-r)\times (m-s)$ zero submatrix do correspond to points of $\bF_k(\pmn)$. We call these {\em standard} compression spaces.  More precisely: 
   \begin{defn}  \label{defn:comps} Let $\sigma$ and $\tau$ be subsets of $\{1,\ldots,n\}$ and $\{1,\ldots,m\}$ such that $|\sigma|-|\tau|=n-r+1$.
     Let $\comp(\sigma,\tau)$ denote the compression space consisting of those linear maps $\KK^n\to\KK^m$ which send the standard basis vectors indexed by elements of $\sigma$ to the subspace of $\KK^m$ generated by  standard basis vectors indexed by elements of $\tau$. Such compression spaces are called \emph{standard compression spaces}.
 \end{defn}
\noindent A standard compression space is shown in (\ref{fig:compression}).  In that example, $\sigma = \{1,\ldots, s+1+n-r\}$ and $\tau = \{m-s+1,\ldots, m\}$.  A straightforward calculation contained in  Proposition~\ref{prop:fixed} shows that every point of $\comp(\sigma,\tau)$ lies in $\pmn$, and hence $\comp(\sigma,\tau)$ corresponds to a point of $\bF_{k}(\pmn)$ for $k=\kappa(|\tau|)$.

 Just as in the determinantal case, $\comp(\sigma,\tau)$ induces subschemes of Fano schemes $\fd$ for $k\leq \kappa(|\tau|)$.   
Regarding $\comp(\sigma,\tau)$ as a vector space, there is an obvious map
$\Gr(k+1, \comp(\sigma,\tau)) \cong Gr(k+1,\kappa(s)+1) \rightarrow
\Gr(k+1,mn)$
which is an isomorphism onto its image.  We call this image $\comp_k(\sigma,\tau).$  But the image of the map is clearly contained in $F_k(\pmn)$, i.e.~$\comp_k(\sigma,\tau)$ is a subscheme of $F_k(\pmn)$ isomorphic to a Grassmannian.  We will give conditions in Proposition~\ref{prop:perm} under which $\comp_k(\sigma,\tau)$ is actually a component of $\fp$.

\subsection{Torus fixed points.}
Next, we would like to take advantage of a natural torus action on our Fano schemes.  Let us view  points of $\PP^{mn-1}$ as $m\times n$ matrices over $\KK$, up to simultaneous rescaling.  The group $\GL_m\times \GL_n$ then acts naturally on $\PP^{mn-1}$: the first factor by inverse matrix multiplication the right, and the second factor by matrix multiplication on the left.  (We will assume throughout this paper that $m\ge 2$, since otherwise $\dmn$ and $\pmn$ become trivial.)

Now $\dmn$ is invariant under this action, but $\pmn$ is not. However, both are invariant under the action of the subgroup $T_m\times T_n$ of $\GL_m \times \GL_n$. 
Here, $T_m \cong (\KK^*)^m$ and $T_n\cong (\KK^*)^n$ are the standard diagonal tori of $\GL_m$ and $\GL_n$ that act by rescaling the rows (respectively the columns) of an $m\times n$ matrix.
This action of $T_m\times T_n$ extends naturally to actions on the Grassmannian $\Gr(k+1,mn)$ and its subschemes $\bF_k(\dmn)$ and $\bF_k(\pmn)$.

When is a closed point of $\Gr(k+1,mn)$ a torus fixed point? Such a point $Q$ corresponds to a $(k+1)$-dimensional linear subspace of $m\times n$ matrices.  But any linear relation on the entries that has more than one term cannot be preserved under all possible rescaling of rows and columns.  So $Q$ must consist of all matrices obtained by setting all but $k+1$ specified entries to zero.  
We often represent such a torus fixed point by a matrix with zeroes and $*$s (or zeroes and blanks), where a $*$ denotes an entry that can vary freely.  

We now prove that of the torus fixed points in $\Gr(k+1, mn)$, the ones that lie in $\bF_k(\dmn)$ and in $\bF_k(\pmn)$ are precisely the ones that are subspaces of {\em standard compression spaces} (Definition~\ref{defn:compression}).
\begin{prop}\label{prop:fixed}
  Let $Q$ be a $(T_m\times T_n)$-fixed point of $\Gr(k+1, mn)$.  Then the following are equivalent:
  \begin{enumerate}[(a)]
  \item $Q$ is contained in $\bF_k(\dmn)$.
  \item $Q$ is contained in  $\bF_k(\pmn)$.
  \item The linear space of matrices corresponding to $Q$ is a subspace of a standard compression space.
  \end{enumerate}
\end{prop}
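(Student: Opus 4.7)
The plan is to reduce this to a combinatorial statement about bipartite graphs and apply König's theorem. A torus-fixed $(k+1)$-dimensional subspace $Q\subset\Gr(k+1,mn)$ must be the span of $k+1$ of the coordinate vectors $E_{i,j}$, so it is determined by a subset $S\subset\{1,\ldots,m\}\times\{1,\ldots,n\}$ of size $k+1$, which I would view as the edge set of a bipartite graph $G_S$ on $[m]\sqcup[n]$.

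First I would handle the easy direction (c) $\Rightarrow$ (a),(b). Suppose $S$ is contained in a standard compression space $\comp(\sigma,\tau)$ with $s=|\tau|$, so $S$ avoids the zero block indexed by rows $[m]\setminus\tau$ and columns $\sigma$. Any $r\times r$ submatrix contains an $(r-s)\times(s+1)$ all-zero subblock, since at most $s$ of its rows can lie in $\tau$ and at most $r-s-1$ of its columns can lie outside $\sigma$. A pigeonhole argument on permutations $\pi$ of the $r$ columns shows that every $\pi$ must send some ``forbidden'' row into the zero block; otherwise $\pi$ restricted to the $r-s$ forbidden rows would inject into the $r-s-1$ free columns. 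Hence every monomial term of the $r\times r$ determinant and every term of the $r\times r$ permanent vanishes, giving (a) and (b).

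For (a) $\Rightarrow$ (c) and (b) $\Rightarrow$ (c), the key observation I would invoke is that the $r\times r$ minors (respectively $r\times r$ permanents) of the generic matrix supported on $S$ are nonzero polynomials in the variables $x_{i,j}$, $(i,j)\in S$, if and only if $G_S$ contains an $r$-matching. The point is that distinct perfect matchings of an $r\times r$ submatrix contribute distinct monomials in these independent variables, so no cancellation can occur regardless of sign. Thus both (a) and (b) are equivalent to the purely combinatorial statement that the maximum matching of $G_S$ has size at most $r-1$.

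Finally, by König's theorem $G_S$ admits a vertex cover of size at most $r-1$, which I would write as $\tau'\sqcup B$ with $\tau'\subset[m]$ and $B\subset[n]$. Setting $\tau:=\tau'$, the set $[n]\setminus B$ has size at least $n-r+1+|\tau|$, so I can choose $\sigma\subset[n]\setminus B$ with $|\sigma|=|\tau|+n-r+1$ exactly (enlarging $\tau$ by dummy elements if necessary to ensure $0\leq |\tau|\leq r-1$). Any $(i,j)\in S$ with $j\in\sigma\subset[n]\setminus B$ must then be covered by $\tau$, placing $S$ inside $\comp(\sigma,\tau)$. The main subtlety is just the index bookkeeping to land on the precise cardinality constraint $|\sigma|-|\tau|=n-r+1$ from Definition~\ref{defn:comps}; the conceptual heart of the proof is the König duality between matchings and vertex covers.
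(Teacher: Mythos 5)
Your proposal is correct and takes essentially the same route as the paper: encode the torus-fixed coordinate subspace as a bipartite graph, observe that vanishing of all $r\times r$ determinants (equivalently permanents, since distinct matchings contribute distinct monomials and no cancellation can occur) is exactly the absence of an $r$-matching, and then use matching--cover duality to extract the zero block of a standard compression space. The only difference is cosmetic: you invoke K\"onig's theorem to get a vertex cover of size at most $r-1$ and take complements, whereas the paper uses a Hall-type deficiency statement (Lemma~\ref{lemma:hall}) to find $m-s$ rows with only $r-s-1$ neighbors; the two are equivalent and produce the same $(m-s)\times(n-r+s+1)$ zero block, and your bookkeeping with $|\sigma|-|\tau|=n-r+1$ checks out without needing the dummy-element enlargement you mention.
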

\begin{proof}
  Represent $Q$ as a matrix of $k+1$ starred entries and $mn-k-1$ zero entries, as explained above.  Now make a bipartite graph $G$ on vertex sets $V=\{1,\ldots,m\}$ and $V'=\{1,\ldots,n\}$ with an edge from $i\in V$ to $j\in V'$ whenever entry $ij$ is starred.  Now, let us prove that a series of statements are equivalent, the first one being that $Q$ is contained in $\fd$ (or $\fp$) and the last being that $Q$ is a subspace of a standard compression space. 
  
  First, $Q$ is contained in $\fd$ (respectively $\fp$) if and only if for every subset $W$ of $V$ of size $r$, $G$ fails to have a matching of $W$ into $V'$. After all, if $G$ fails to have such a matching, then every term from each $r\times r$ determinant has a zero entry in it; whereas if $G$ has such a matching, then specializing the $r$ entries corresponding to the edges in that matching to $1$, and setting all others to zero, gives a nonzero $r\times r$ determinant (or permanent).  

  Second, by Lemma \ref{lemma:hall} below,
   $G$ fails to have a matching of such a $W\subset V$ into $V'$ if and only if there exist some $m-s$ vertices in $V$ with only $r-s-1$ neighbors amongst the vertices in $V'$.  
  But the existence of some $m-s$ vertices in $V$ with only $r-s-1$ neighbors in $V'$ means precisely that $Q$ has an $(m-s)\times (n-r+s+1)$ block of zeroes, i.e.~$Q$ is a subspace of a standard $s$-compression space.
\end{proof}
\begin{lemma}\label{lemma:hall}
Let $G=(V,V',E)$ be a bipartite graph and $r$ a number with $1\leq r \leq |V|$. Then the following are equivalent:
\begin{enumerate}
	\item There is no subset $W\subset V$ with $|W|=r$ which has a matching into $V'$.\label{item:propone}
	\item There is a subset $S\subset V$ having only $r+|S|-|V|-1$ neighbors in $V'$.
\end{enumerate}
\end{lemma}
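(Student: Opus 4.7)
The plan is to recognize this as a reformulation of Hall's marriage theorem (specifically its deficiency version) and handle the two implications separately.

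For (2) $\Rightarrow$ (1), I would argue directly via Hall's condition. Given $S \subset V$ with $|N(S)| \leq r+|S|-|V|-1$, and any $W \subset V$ with $|W| = r$, inclusion-exclusion yields
$$|W\cap S| \;\geq\; |W|+|S|-|V| \;=\; r+|S|-|V|.$$
Since $N(W\cap S) \subseteq N(S)$, we obtain $|N(W\cap S)| \leq r+|S|-|V|-1 < |W\cap S|$, so Hall's condition fails for $W$ and no matching of $W$ into $V'$ exists.

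For the converse (1) $\Rightarrow$ (2), I would argue by contrapositive: assume $|N(S)| \geq r+|S|-|V|$ for every $S \subset V$, and produce an $r$-subset of $V$ that is matched into $V'$. The trick is to form an auxiliary bipartite graph $G'$ by adjoining $|V|-r$ new vertices to $V'$, each joined to every vertex of $V$. For any nonempty $S \subset V$, the neighborhood in $G'$ satisfies
$$|N_{G'}(S)| \;=\; |N_G(S)| + (|V|-r) \;\geq\; |S|,$$
so Hall's condition holds for $G'$. Hence $V$ admits a matching into $V' \cup \{\text{new vertices}\}$ in $G'$. At most $|V|-r$ of the vertices in $V$ are matched to new vertices, leaving at least $r$ vertices of $V$ matched into the original $V'$; taking any $r$ of these gives the desired subset $W$.

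The only mild subtlety is the interpretation of ``only $r+|S|-|V|-1$ neighbors'' as ``at most $r+|S|-|V|-1$ neighbors,'' which is the reading consistent with the application in the proof of Proposition~\ref{prop:fixed} (where one only needs $N(S)$ to fit inside a chosen set of columns of the matrix). With this convention fixed, neither implication presents a real obstacle: the first is a one-line pigeonhole argument, and the second reduces cleanly to a single application of ordinary Hall's theorem after the auxiliary-vertex construction.
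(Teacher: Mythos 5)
Your proof is correct and uses essentially the same idea as the paper: adjoin $|V|-r$ auxiliary vertices adjacent to all of $V$ and apply Hall's Marriage Theorem, then subtract the $|V|-r$ auxiliary neighbors. The only difference is that you handle the direction (2) $\Rightarrow$ (1) by a direct Hall-condition/pigeonhole argument instead of invoking the biconditional form of Hall's theorem as the paper does, which is a harmless (if anything, slightly more explicit) variation.
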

\begin{proof}
	Add $|V|-r$ vertices $U$ to $G$, each of which is adjacent to every vertex in $V$. Then condition~(\ref{item:propone}) above is equivalent to $V$ having no matching into $U\cup V'$. By Hall's Marriage Theorem, this is equivalent to the existence of a subset $S$ of vertices in $V$ with fewer than $|S|$ neighbors in $U\cup V'$.
Now, these $S$ vertices have $|V|-r$ neighbors in $U$, hence fewer than $|S|+r-|V|$ neighbors in $V'$.
\end{proof}

\begin{rem}\label{rem:intersect}
  Proposition \ref{prop:fixed} implies that any irreducible component $Z$ of $\bF_k(\dmn)$ or $\bF_k(\pmn)$ must intersect a subscheme of the form $\comp_k(s)$ or $\comp_k(\sigma,\tau)$, respectively. Indeed,  our Fano schemes are projective, hence $Z$ is as well, and thus must contain a torus fixed point. But any torus fixed point is contained in a subscheme of the desired form.  This simple observation will play an important role in our proofs.
  \end{rem}

Finally, we can easily prove when $\fd$ and $\fp$ are nonempty just by looking at torus fixed points.  See \cite{dieudonne} for the determinantal case.  
\begin{prop}\label{prop:nonempty}
The Fano schemes $\fd$ and $\fp$ are nonempty if and only if $k<(r\!-\!1)n$.
\end{prop}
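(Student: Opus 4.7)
The plan is to prove both directions using the torus fixed point analysis already set up, in particular Proposition~\ref{prop:fixed} and Remark~\ref{rem:intersect}. The fact that both Fano schemes have the same nonemptiness range is not a coincidence: both will turn out to be controlled by the largest standard compression space.

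For the ``if'' direction, I would exhibit an explicit linear space. Take $s = r-1$ and consider any standard $(r-1)$-compression space $\comp(\sigma,\tau)$ with $|\tau| = r-1$ and $|\sigma| = n$, i.e.\ the matrices with a fixed block of $m-r+1$ zero rows. This lies in $\dmn$ and in $\pmn$ by Proposition~\ref{prop:fixed}, and, viewed as a linear subspace of $\KK^{mn}$, it has dimension $(r-1)n = \kappa(r-1)+1$. Hence it defines a $\PP^{(r-1)n-1}\subset \PP^{mn-1}$ contained in both hypersurfaces of interest, and so contains $k$-planes for every $k\le (r-1)n-1$.

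For the ``only if'' direction, I would argue by contradiction: suppose $\fd$ (resp.\ $\fp$) is nonempty. Since it is projective and carries the $T_m\times T_n$ action, it contains a torus fixed point. By Proposition~\ref{prop:fixed}, that fixed point corresponds to a $(k+1)$-dimensional subspace of some standard $s$-compression space, which has dimension $\kappa(s)+1$. Therefore $k\le \kappa(s)$ for some $0\le s\le r-1$, so it suffices to check that $\max_{0\le s\le r-1}\kappa(s) = (r-1)n-1$.

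The remaining step is to maximize $\kappa(s) = mn-(m-s)(s+n-r+1)-1$ over $0\le s\le r-1$. Equivalently, minimize $f(s):=(m-s)(s+n-r+1)$. Since $f$ is a downward-opening quadratic in $s$, its minimum on the interval is attained at an endpoint: $f(0) = m(n-r+1)$ and $f(r-1) = (m-r+1)n$. Using the standing hypothesis $m\le n$, we have $m(n-r+1)-(m-r+1)n = (r-1)(n-m)\ge 0$, so $f(r-1)\le f(0)$ and the maximum of $\kappa$ on $[0,r-1]$ is $\kappa(r-1) = (r-1)n-1$, as desired. This is the only nontrivial calculation required; no part of the argument looks like a serious obstacle, as the heavy lifting (the combinatorial characterization of torus fixed points) has already been done in Proposition~\ref{prop:fixed}.
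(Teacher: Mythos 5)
Your proof is correct and follows essentially the same route as the paper: nonemptiness is reduced to the existence of torus fixed points, which by Proposition~\ref{prop:fixed} are subspaces of standard compression spaces, and then one checks that $\max_s \kappa(s)=\kappa(r-1)=(r-1)n-1$ (the paper phrases the endpoint argument via convexity of $\kappa$, you via concavity of $(m-s)(s+n-r+1)$, which is the same computation). Exhibiting the $(r-1)$-compression space of zero rows explicitly for the ``if'' direction is a harmless, slightly more concrete variant of the paper's statement that a fixed point exists whenever $k\le\kappa(s)$.
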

\begin{proof}
The schemes $\fd$ and $\fp$ are nonempty if and only if $k \le \kappa(s)$ for some $s\in\{0,\ldots,r-1\}$.  After all, if $k\le\kappa(s)$ for some $s$, then $\fd$ and $\fp$ each contain a torus fixed point that is a subspace of a standard $s$-compression space; but if not, then $\fd$ and $\fp$ have no torus fixed points so are empty.

Now, we have $$\kappa(0) = (r-1)m-1 \quad\textrm{ and }\quad \kappa(r-1) = (r\!-\!1)n-1.$$  
So if $k<(r\!-\!1)n$, then $k\le \kappa(r-1)$ as desired.  For the converse, if $k \ge (r\!-\!1)n$ then $k > \kappa(0)$ and $k > \kappa(r-1)$.  Then by convexity of $\kappa$, we conclude that $k> \kappa(s)$ for all $s$.
\end{proof}

\section{Tangent Space Calculations}\label{sec:tangent}

The next thing we would like to do is to calculate the dimension of the tangent space of $\fd$, respectively $\fp$, at some carefully chosen points.  The points we will look at lie in $\comp_k(s)$ or $\comp_k(\sigma,\tau)$, i.e.~they are $k$-planes inside compression spaces; and we will choose them to have a particular form that is favorable to making explicit computations using deformation theory.  

Our main theorems in this section (Theorem \ref{thm:ts} for the determinant and Theorem~\ref{thm:tsperm} for the permanent) will each have two parts.  First, the tangent space at a {\em general} point of $\comp_k(s)$ or $\comp_k(\sigma,\tau)$ can be no larger than its dimension at the points we study, since tangent space dimensions are upper semicontinuous.  Second, we will show that if certain additional inequalities hold, then every torus fixed point actually has the special form that allows our computation to go through.  We can therefore conclude that tangent space dimension at {\em every} point of $\comp_k(s)$ or $\comp_k(\sigma,\tau)$ can be no larger than the specified dimension.  

This section is the technical heart of the paper, and we will reap the rewards in Sections~\ref{sec:compression} and~\ref{sec:smooth} when we use these tangent space calculations to prove theorems on smoothness and connectedness of $\fd$ and $\fp$.

Here are the two main theorems.  

\begin{theorem}\label{thm:ts}
Fix integers $k$ and $s$ with $1\leq k \leq \kappa(s)$ and $0\leq s \leq r-1$.
\begin{enumerate}[(i)]
	\item 
  For a general point $\eta\in \comp_k(s)$, 
\begin{align*}
  \dim T_\eta\bF_k(\dmn)\leq \delta(s)+(k+1)(\kappa(s)-k).
\end{align*}
where $\kappa(s)$ and $\delta(s)$ are defined in~\eqref{eqn:kappa} and~\eqref{eqn:delta}.  

\item If furthermore both of the conditions 
\begin{align}
  k> \kappa(s)-(m-s-1)\label{eq:smootha}&& \textrm{if $s \ne r-1$}\\
  k> \kappa(s)-(n-r+s)\label{eq:smoothb}&& \textrm{if $s \ne 0$}
\end{align}
hold, then the dimension bound holds for {\em every} point $\eta\in \comp_k(s)$.  
\end{enumerate}
\end{theorem}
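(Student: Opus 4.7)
The plan is to compute the tangent space $T_\eta \bF_k(\dmn)$ via deformation theory and then exploit upper semicontinuity of tangent space dimension. At any closed point $\eta = [W] \in \bF_k(\dmn)$, with $W \subset V = \KK^{mn}$ a $(k+1)$-dimensional subspace, the tangent space sits inside $T_\eta\Gr(k+1,mn) = \Hom(W, V/W)$; a homomorphism $\phi$ lies in $T_\eta \bF_k(\dmn)$ precisely when, for every $r\times r$ minor $F$ of the generic matrix and every $w\in W$, the scalar $(dF)_w(\tilde\phi(w))$ vanishes modulo $W$. This is a system of linear equations on $\phi$ whose coefficients are $(r-1)\times(r-1)$ subdeterminants of $w$, and the task is to control the rank of this system.

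For Part (i), by upper semicontinuity it suffices to exhibit a single point $\eta \in \comp_k(s)$ where $\dim T_\eta \bF_k(\dmn) \leq \delta(s) + (k+1)(\kappa(s)-k)$. I will take $\eta$ to be a coordinate $(k+1)$-subspace $W$ of a standard compression space $\comp(\sigma,\tau)$ whose support $S$ of non-zero positions meets every row and every column of the non-zero block of $\comp(\sigma,\tau)$ as amply as possible. At such $\eta$, the ambient $\Hom(W, V/W)$ decomposes naturally into three kinds of deformations according to where each basis matrix of $W$ moves: (a) those keeping $W$ inside $\comp(\sigma,\tau)$, which span the $(k+1)(\kappa(s)-k)$-dimensional fiber of the Grassmann bundle $\Gr(k+1, \U(s)) \to \comp(s)$; (b) those deforming the enclosing compression space itself, i.e.\ varying the subspaces $V \subset \KK^n$ and $W' \subset \KK^m$ that cut it out, contributing $\delta(s)$ dimensions; and (c) all others. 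A direct cofactor calculation at our spread $W$ shows that the linearized minor conditions annihilate exactly the type-(c) deformations, since any such deformation would produce a matrix of rank $r$ to first order, detected by some nonvanishing $(r-1)\times(r-1)$ subdeterminant of a generic $w \in W$.

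For Part (ii), the inequalities \eqref{eq:smootha} and \eqref{eq:smoothb} translate into sharp bounds $|S^c| < m-s-1$ (when $s \neq r-1$) and $|S^c| < n-r+s$ (when $s \neq 0$), where $S$ is the support of a torus fixed point in $\comp_k(s)$ and $S^c$ is its complement within the non-zero block of the enclosing $\comp(\sigma,\tau)$. I will verify that these bounds force $S$ to meet every row and column of the non-zero block sufficiently many times for the spread analysis of (i) to apply verbatim at \emph{every} torus fixed point; in particular, they prevent $W$ from accidentally landing inside a smaller compression space of a different type, which would otherwise enlarge the tangent space. Since $\comp_k(s)$ is projective, every closed point specializes under the torus action to a torus fixed point, so upper semicontinuity propagates the bound from torus fixed points to all of $\comp_k(s)$.

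The main technical obstacle is the explicit cofactor calculation in Part (i): organizing the linearized minor equations so that their common kernel is precisely $\delta(s) + (k+1)(\kappa(s)-k)$-dimensional and identifying this kernel concretely with the tangent directions to the Grassmann bundle $\Gr(k+1, \U(s))$. Verifying in Part (ii) that \eqref{eq:smootha} and \eqref{eq:smoothb} are the precise conditions needed for the spread property is more combinatorial but essential, since any weakening of the hypotheses would admit torus fixed points lying in smaller compression spaces for which the bound can genuinely fail.
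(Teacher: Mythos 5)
There is a genuine gap, and it is located exactly at the heart of part (i): your choice of witness point. You propose to verify the bound at a \emph{coordinate} $(k+1)$-subspace, i.e.\ a torus fixed point of $\comp_k(s)$, chosen ``as spread out as possible,'' and then invoke semicontinuity. But for small $k$ (which part (i) must cover, e.g.\ $k=1$, the case needed for Theorem~\ref{thm:lines}) no torus fixed point satisfies the bound, no matter how its support is spread. Concretely, take $r=m=n=3$, $s=1$, $k=1$, so the claimed bound is $\delta(1)+2(\kappa(1)-1)=10$. A torus fixed point of $\comp_1(1)$ has only two nonzero coordinate entries outside the $2\times 2$ zero block, so some row or column of the nonzero block is forced to vanish; running the linearized-minor computation (the paper's Lemma~\ref{lemma:abc}) at, say, the plane with $B=(z_0\ 0)^T$, $C=(0\ z_1)$ gives the single condition $a_{21}=0$ on the $2\times 2$ block $A$, hence tangent dimension $6+6=12>10$, and the other supports are worse. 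The same pigeonhole phenomenon occurs for $s=0$, $k=1$ (every fixed point also lies in $\comp_1(r-1)$). So your ``direct cofactor calculation'' at a coordinate subspace would not annihilate the type-(c) deformations; it would exhibit extra ones, and semicontinuity then has nothing to propagate. Relatedly, your assertion that the type-(b) directions (deforming the ambient compression space) contribute $\delta(s)$ independent dimensions is precisely what fails at fixed points with a zero column of $B$ or zero row of $C$: there the differential of $\rho_k(s)$ drops rank (this is how the paper later exhibits singular points of $\comp_k(s)$).

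The missing idea is that the test points for part (i) must be \emph{non-coordinate} planes in which the same variables repeat in a structured way: the paper takes $B=B(z_0,z_1)+B'$ and $C=C(z_0,z_1)+C'$ with bidiagonal $z_0,z_1$ patterns (with separate treatment of $s=0$, $s=r-1$, and $k=1$), and the technical core is proving that the admissible blocks $A$ in Lemma~\ref{lemma:abc} are exactly the sums of matrices in the column span of $B$ and the row span of $C$ — a $\delta(s)$-dimensional space — via the auxiliary Lemma~\ref{lemma:rowdet} and careful bookkeeping of $(z_0,z_1)$-degrees in the anchored minors. None of this appears in your proposal, and without it the inequality in (i) is not established at any point. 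Your part (ii) outline (translate \eqref{eq:smootha}–\eqref{eq:smoothb} into a bound on the number of zero entries, handle all torus fixed points, then specialize arbitrary points to fixed points along torus orbits) does match the paper's strategy in spirit, but in the paper the conditions are used together with a combinatorial lemma (Lemma~\ref{lemma:stars}) and a linear change of the coordinates $z_i$ to put every fixed point into the special repeated-variable form of part (i) — not to make the computation at the coordinate subspace itself go through, which, as the example above shows, it does not.
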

We have the following analogous result for the permanent:

\begin{theorem}\label{thm:tsperm}

Fix integers $k$ and $s$ such that
\begin{align*}
s=0,~r-1\qquad &\textrm{ and }\qquad 2\leq k \leq \kappa(s),\textrm{ or}\\
1\leq s\leq r-2\qquad &\textrm{ and }\qquad 5\leq k \leq \kappa(s).
\intertext{Suppose further that}
s+1+n-r \ge 3 \quad\text{  if }\quad s\ne 0\qquad &\text{ and }\qquad m-s \ge 3\quad\text{ if }\quad s\ne r-1.
\end{align*}
Consider a standard compression space $\comp(\sigma,\tau)$ (Definition~\ref{defn:comps}) with $|\tau|=s$.
\begin{enumerate}[(i)]
	\item   For a general point $\eta\in \comp_k(\sigma,\tau)$, 
\begin{align*}
  \dim T_\eta\bF_k(\pmn)\leq (k+1)(\kappa(s)-k).
\end{align*}
\item If furthermore both of the conditions 
\begin{align}
  k> \kappa(s)-(m-s-2)\label{eq:smoothap}&& \textrm{if $s \ne r-1$}\\
  k> \kappa(s)-(n-r+s-1)\label{eq:smoothbp}&& \textrm{if $s \ne 0$}
\end{align}
hold, then the dimension bound holds for {\em every} point $\eta\in \comp_k(\sigma,\tau)$. 
\end{enumerate}
\end{theorem}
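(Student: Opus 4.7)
The tangent space at a point $[V]\in \bF_k(\pmn)$ consists of first-order perturbations $\phi \in \Hom(V, \KK^{mn}/V)$ which preserve the vanishing of every $r \times r$ permanent; concretely, lifting $\phi$ to $\tilde\phi : V \to \KK^{mn}$, the condition is the polynomial identity $\langle \nabla (\perm P_{R,C})(v),\,\tilde\phi(v)\rangle \equiv 0$ on $V$ for every $r\times r$ submatrix $P_{R,C}$ of the ambient matrix of indeterminates. Since tangent space dimension is upper-semicontinuous and $\comp_k(\sigma,\tau)$ is $(T_m\times T_n)$-invariant, both parts reduce to bounding $\dim T_\eta \bF_k(\pmn)$ at suitable torus-fixed points $\eta \in \comp_k(\sigma,\tau)$; these fixed points are in bijection with choices of $k+1$ of the $\kappa(s)+1$ starred positions of $\comp(\sigma,\tau)$.

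For part (i), I would pick $\eta$ to be a \emph{generic} torus-fixed point of $\comp_k(\sigma,\tau)$, one whose $k+1$ starred positions spread as evenly as possible over the rows in $\tau$ and $\tau^c$ and over the columns inside and outside $\sigma$. Decompose $\KK^{mn}=\comp(\sigma,\tau)\oplus Z$, where $Z$ is the coordinate span of the zero-block entries, and split the lift $\tilde\phi = \tilde\phi_{\mathrm{in}}+\tilde\phi_Z$ accordingly. Modulo $V$, the summand $\tilde\phi_{\mathrm{in}}$ contributes exactly the tangent space to the sub-Grassmannian $\Gr(k+1,\comp(\sigma,\tau))\cong \comp_k(\sigma,\tau)$, of dimension $(k+1)(\kappa(s)-k)$; the task is therefore to show $\tilde\phi_Z=0$. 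This is forced by the $r\times r$ permanents which \emph{straddle} $\tau$ and $\tau^c$, or $\sigma$ and $\sigma^c$: pairing such a gradient against $\tilde\phi$ at a generic $v\in V$ produces a linear relation whose only non-trivial input is an entry of $\tilde\phi_Z$. A combinatorial count, exploiting the spread of $\eta$, shows that these relations collectively force every entry of $\tilde\phi_Z$ to vanish. The lower bounds $k\ge 2$ for $s\in\{0,r-1\}$ and $k\ge 5$ for intermediate $s$, together with $s+1+n-r\ge 3$ and $m-s\ge 3$, are calibrated precisely to ensure that enough straddling permanents are available and behave non-degenerately.

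For part (ii), I would upgrade the argument to every torus-fixed $\eta \in \comp_k(\sigma,\tau)$ by verifying that the additional hypotheses (\ref{eq:smoothap})--(\ref{eq:smoothbp}) force every such $\eta$ to already have the ``spread'' property used in part (i). Any torus-fixed point of $\comp_k(\sigma,\tau)$ is obtained by deleting $\kappa(s)-k$ starred positions from $\comp(\sigma,\tau)$; the hypotheses cap this number of deletions by $m-s-3$ and $n-r+s-2$ respectively. A pigeonhole argument on the rows of $\tau^c$ and the columns of $\sigma$ then shows that, regardless of which positions are deleted, the key straddling permanents invoked in part (i) still produce enough independent linear relations to kill $\tilde\phi_Z$. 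The bound therefore holds at every torus-fixed point of $\comp_k(\sigma,\tau)$, and hence everywhere on it by upper-semicontinuity.

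The principal obstacle is the combinatorial rank computation in part (i): one must pinpoint exactly which straddling $r\times r$ permanents contribute at the chosen $\eta$, and verify that the induced linear system on the entries of $\tilde\phi_Z$ is of full rank. In the determinantal analog, the $\GL_m\times\GL_n$-symmetry both contributes a clean $\delta(s)$ term from the moduli of compression spaces and lets one reduce to a single normal form; here, with only the toric symmetry available, every contribution must be tracked position by position. The somewhat restrictive lower bounds on $k$ and on the block sizes $m-s$ and $s+1+n-r$ are exactly what prevent this bookkeeping from collapsing in the boundary cases.
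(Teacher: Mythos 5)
Your setup (identifying tangent vectors with perturbations $\tilde\phi$ and splitting off the part $\tilde\phi_Z$ landing in the zero block, so that the claim becomes $\tilde\phi_Z=0$) is essentially the paper's Lemma~\ref{lemma:abc}, and your part (ii) reduction to torus-fixed points via orbit closures and semicontinuity is also the route the paper takes. The genuine gap is in part (i): you propose to establish the bound at a \emph{generic torus-fixed point} of $\comp_k(\sigma,\tau)$, but in the range of $k$ allowed by the theorem this bound simply fails at torus-fixed points, so no amount of bookkeeping with ``straddling'' permanents can force $\tilde\phi_Z=0$ there. Concretely, take $r=m=n=3$, $s=2$, $k=4$ (allowed by part (i)). Every torus-fixed point of $\comp_4(\{1,2,3\},\{2,3\})\cong\PP^5$ is, up to permuting rows and columns, the point $Q$ with stars in five of the six entries of the last two rows; the paper's analysis of $\bF_4(P^3_{3,3})$ (Proposition~\ref{prop:f4p333} and the discussion preceding it) shows that through $Q$ there pass, besides the $\PP^5$, two surface components and an embedded fat point supported at $Q$, so $\dim T_Q\bF_4(P^3_{3,3})>5=(k+1)(\kappa(s)-k)$. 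Since all torus-fixed points of this compression subscheme are equivalent to $Q$, there is no ``well-spread'' fixed point at which your linear system on $\tilde\phi_Z$ can have full rank.

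This is exactly why the paper does \emph{not} evaluate at coordinate subspaces in part (i): it instead exhibits non-torus-fixed $k$-planes whose blocks have the tridiagonal forms $B(z_0,z_1,z_2)$ and $C(z_3,z_4,z_5)$ of \eqref{eq:bcperm}, with \emph{repeated} variables; the repetition makes the anchored permanent constraints far more rigid than at a coordinate subspace, and the vanishing of the perturbation block is then proved via Lemmas~\ref{lemma:rowperm}, \ref{lemma:permterms} and \ref{lemma:2case} (with the Laubenbacher--Swanson decomposition handling the $2\times n$ case), after which semicontinuity gives the bound at a general point. Torus-fixed points enter only in part (ii): there the hypotheses \eqref{eq:smoothap}--\eqref{eq:smoothbp} cap the number of deleted stars (as you note), and the pigeonhole statement Lemma~\ref{lemma:stars} with $c=3$ shows every fixed point can be permuted and then brought by a linear change of coordinates into the repeated-variable normal form of part (i). So your outline is missing the key idea of the proof---the passage to special non-coordinate planes with repeated variables---and the point at which you propose to do the rank computation is one where the desired inequality is false.
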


\begin{rem}
  For Theorem \ref{thm:ts},  the bounds on $k$ in \eqref{eq:smootha} and \eqref{eq:smoothb} are sharp. Indeed, if $k$ doesn't satisfy both bounds, then there are torus fixed points in $\comp_k(s)$ which also lie in $\comp_k(s-1)$ or $\comp_k(s+1)$. Since these schemes are not equal by Proposition \ref{prop:distinct} and the dimension of $\comp_k(s)$ is $\delta(s)+(k+1)(\kappa(s)-k)$ by Corollary \ref{cor:comp}, the tangent space dimension at such points must be higher.
\end{rem}

We will prove our two main theorems in parallel below, because much of the setup is the same.

We start by recalling a completely general characterization of tangent spaces to Fano schemes. Suppose $X\subset\PP^N_K$ is any projective scheme with homogeneous ideal $I\subset S=K[x_0,\ldots,x_N]$, and $\Lambda\subset\PP^N$ is a $k$-plane contained in $X$ defined by the homogeneous ideal $J\subset S$. Write $[\Lambda]$ for the corresponding point of $\bF_k(X)$ and let $\J$ be the ideal sheaf of $\Lambda$.   Then the tangent space $T_{[\Lambda]} \bF_k(X)$ is isomorphic to the space of first-order deformations of $\Lambda$ in $X$, which is well-known to be isomorphic to $\Hom_{\CO_{X}}(\J,\CO_{\Lambda})$, see e.g.~\cite[Theorem 2.4]{hartshorne:10a}. Now, in our case, $J$ is generated by linear forms, so $J/I$ as well as $S/J$ are saturated graded $S/I$-modules.  So $T_{[\Lambda]} \bF_k(X)\cong \Hom _{\CO_{X}}(\J,\CO_{\Lambda}) \cong \Hom_{S/I}(J/I,S/J)_0$, the space of degree-preserving maps of $S/I$-modules.

Now, in our specific situation, fix $r,m,$ and $n$, and let $S=\KK[x_{1,1},\ldots,x_{m,n}]$.  
Suppose that $J \subset S$ is a linear ideal defining a $k$-plane $\Lambda\subset\PP^{mn-1}$ of matrices lying in the standard $s$-compression space shown in \eqref{fig:compression}.  
Now, let us choose standard linear monomials $z_0,\ldots,z_k$ for $S/J$, i.e.~specify an isomorphism $S/J\cong \KK[z_0,\ldots,z_k]$.  Then we can regard $\Lambda$ as a matrix whose entries are linear forms in $z_0,\ldots,z_k$, such that the linear span of these forms has full dimension $k+1$.  
In other words, $\Lambda$ has the form
\begin{equation}
\label{eq:matrix}
\Lambda=\left(\!\begin{array}{c |c}
     &\cr
	\quad \text{\Large 0} \quad &B\cr
	&\cr
	\hline
	C&D
\end{array}\!\right)
\end{equation}
where the upper left block of zeroes has size $(m-s)\times (s+1+n-r)$, and the entries of $B,C,$ and $D$ are in $\KK[z_0,\ldots,z_k]_1$.  
See Example~\ref{ex:detform} for a matrix of this form.

The next lemma tells us concretely how to compute the dimensions of the tangent spaces of $\fd$ and $\fp$ at the point $[\Lambda]$.  Call an $r\times r$ submatrix of an $m\times n$ matrix {\em anchored} if it involves the last $s$ rows as well as the last $r-s-1$ columns.

\begin{lemma}\label{lemma:abc}
For a $k$-plane $\Lambda$ as above,
let $a_{\det}$ (respectively $a_{\perm}$) be the $\KK$-dimension of the space of $(m\!-\!s)\times(s\!+\!1\!+\!n\!-\!r)$ matrices $A$ with entries in $\KK[z_0,\ldots,z_k]_1$ such that the $r\times r$ anchored determinants (respectively permanents) of the $m\times n$ matrix
\begin{equation}\label{eq:abc}
Q=\left(\!\begin{array}{c |c}
     &\cr
	\quad A \quad &B\cr
	&\cr
	\hline
	C&0
\end{array}\!\right)
\end{equation}
all vanish.
Then 
\begin{align*}
\dim T_{[\Lambda]} \fd &= a_{\det} + (k+1)(\kappa(s)-k),\\
\dim T_{[\Lambda]} \fp &= a_{\perm} + (k+1)(\kappa(s)-k).
\end{align*}
\end{lemma}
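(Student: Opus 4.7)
The plan is to identify $T_{[\Lambda]}\fd$ and $T_{[\Lambda]}\fp$ with first-order deformations of the matrix $\Lambda$, organize them via the block structure of $\Lambda$, and then carry out a single cofactor calculation. I would write a first-order deformation as $\Lambda+\epsilon\Lambda'$ with
\[
\Lambda' \;=\; \begin{pmatrix} A & B' \\ C' & D' \end{pmatrix},
\]
a matrix of linear forms in $z_0,\ldots,z_k$ matching the shape of $\Lambda$. The tangent space $T_{[\Lambda]}\Gr(k+1,mn)$ is the quotient of this $(k+1)mn$-dimensional space of matrices by the $(k+1)^2$-dimensional reparametrization subspace, coming from substitutions $z_j\mapsto\sum_i c_{ij}z_i$ inside $\Lambda$. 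Since the top-left block of $\Lambda$ is zero, every substituted matrix also has zero top-left block; the reparametrization action therefore fixes $A$ and only moves $(B',C',D')$. This would produce a canonical splitting
\[
T_{[\Lambda]}\Gr(k+1,mn) \;\cong\; V_A \,\oplus\, T_{[\Lambda]}\Gr(k+1,\comp(s)),
\]
where $V_A$ denotes the space of $(m-s)\times(s+1+n-r)$ matrices of linear forms in $z_0,\ldots,z_k$ (the matrices $A$ of the lemma), and the right-hand summand has dimension $(k+1)(\kappa(s)-k)$.

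Next, I would observe that the right-hand summand automatically lies inside $T_{[\Lambda]}\fd$, since every deformation in the compression direction stays inside $\comp(s)$, where every $r\times r$ determinant vanishes identically; in the permanent setting the analogous statement for the standard compression space $\comp(\sigma,\tau)$ is part of Proposition~\ref{prop:fixed}. It then remains to cut $V_A$ down by the first-order condition: determine which $A$'s make every $r\times r$ minor (resp.\ permanent) of $\begin{pmatrix} \epsilon A & B \\ C & D \end{pmatrix}$ vanish modulo $\epsilon^2$.

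For this I would run the following cofactor analysis. For an $r\times r$ submatrix taking $p$ rows from the top block and $q$ columns from the left block, the block sizes force $p\geq r-s$ and $q\geq s+1$, so $p+q\geq r+1$. The first-order term in $\epsilon$ equals $\sum_{(i,j)} A_{ij}\,\gamma_{ij}$, summed over $(i,j)$ in the top-left subblock, where $\gamma_{ij}$ is the $(r-1)\times(r-1)$ determinantal or permanental cofactor of $\Lambda$ at $(i,j)$. Such a cofactor has a $(p-1)\times(q-1)$ zero subblock in its top-left, and therefore vanishes identically unless $p+q\leq r+1$. So only the anchored case $p=r-s$, $q=s+1$ contributes. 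In that case the residual zero subblock fills a full corner of side $(p-1)+(q-1)=r-1$ in an $(r-1)\times(r-1)$ matrix, forcing the unique surviving matching to send the remaining top rows bijectively into the right columns of $B$ and the remaining bottom rows bijectively into the left columns of $C$, with no entries of $D$ appearing. Thus $\gamma_{ij}$ is (up to a sign in the determinantal case) a product of an $(r-s-1)\times(r-s-1)$ minor of $B$ and an $s\times s$ minor of $C$. Finally, I would Laplace-expand the anchored $r\times r$ minor (resp.\ permanent) of $\begin{pmatrix} A & B \\ C & 0 \end{pmatrix}$ along its top-left block and see that it produces exactly the same sum $\sum_{(i,j)} A_{ij}\,\gamma_{ij}$, so the two vanishing conditions coincide. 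The allowable $A$'s then form a subspace of $V_A$ of dimension $a_{\det}$ (resp.\ $a_{\perm}$), and summing the two dimensions gives the claimed formulas.

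The hard part will be the cofactor bookkeeping in the anchored case: one must verify that the corner zero block of side $r-1$ really does force the unique matching described, and, for determinants, that the signs produced by the two Laplace expansions agree. The permanent version is slightly easier because no signs appear.
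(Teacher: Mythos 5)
Your proposal is correct, and the underlying computation is the same one the paper does, but you package it differently. The paper works ideal-theoretically: it identifies $T_{[\Lambda]}\fd$ with $\Hom_{S/I}(J/I,S/J)_0$, takes as generators of $J$ the zero-block coordinates $x_{i,j}$ together with $\kappa(s)-k$ extra linear forms $f_i$, and argues that apart from Koszul relations the only constraints come from writing the anchored $r\times r$ determinants (permanents) as combinations of these generators, which yields exactly the relation $\sum\pm b_ic_j\,\phi(x_{i,j})=0$, i.e.\ the anchored minors of $Q$; the $\phi(f_i)$ are free, giving the $(k+1)(\kappa(s)-k)$ term. You instead work parametrically on the Grassmannian: deformations $\Lambda+\epsilon\Lambda'$ modulo reparametrization, the exact sequence coming from the inclusion of the compression space $W\subset\KK^{mn}$ (your $V_A$ versus $T_{[\Lambda]}\Gr(k+1,W)$), and a cofactor count showing that only anchored submatrices impose conditions and that these conditions involve only $A$ and only $B$- and $C$-minors. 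This buys you something real: you never have to argue what the full module of relations on $J/I$ is, since on $\KK[\epsilon]$-points the condition is simply that all $r\times r$ minors of the deformed matrix vanish; the paper's route, in turn, is the one that sets up its later obstruction computations in the same language. Two small points to make explicit when you write this up: (1) to pass from your splitting to the dimension formula you should note that $T_{[\Lambda]}\fd$ is a linear subspace containing the kernel $T_{[\Lambda]}\Gr(k+1,W)$ of the projection to $V_A$, hence equals the preimage of its image, so the condition on a general $(A,B',C',D')$ reduces to the condition on $(A,0,0,0)$; (2) your worry about signs is vacuous, because the first-order term of the deformed anchored minor and the anchored minor of $Q$ are literally the same polynomial, namely the part of $\det\left(\begin{smallmatrix} A_1 & B_1\\ C_1 & D_1\end{smallmatrix}\right)$ that is linear in the entries of $A_1$ (which, as you observe, contains no $D_1$-entries), and likewise for permanents.
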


\begin{proof}[Proof of Lemma~\ref{lemma:abc}]
Let $I_{\det}\subset S$ and $I_{\perm}\subset S$ be the ideals generated by the determinants and permanents, respectively, of the $r\times r$ submatrices of the $m\times n$ matrix $(x_{i,j})$.  We will treat both cases simultaneously.  Let $I=I_{\det}$ or $I_{\perm}$, and let $J\subset S$ be the linear ideal of $\Lambda.$  Then $J$
  is generated by the coordinates $x_{i,j}$ with $i\le m-s$ and $j\le s+1+n-r$, corresponding to the condition that the upper left block is zero, along with $\kappa(s)-k$ additional independent linear forms $f_i$ on the remaining $x_{i,j}$'s that determine the linear relations on the other entries of the matrix. 
  We wish to compute the dimension of $\Hom_{S/I}(J/I,S/J)_0$.

   To give an element of $\Hom_{S/I}(J/I,S/J)_0$, we must specify an element of $(S/J)_1$, i.e.~a linear form in $z_0,\ldots,z_k$, for each generator $x_{i,j}$ and $f_i$.  This choice of elements of $(S/J)_1$ is subject to the condition that all the relations among the $x_{i,j}$'s and the $f_i$'s are preserved among their images.
Let's examine these relations.

Assume for a moment that $r=m=n$, and suppose we have a map $\phi\in \Hom_{S/I}(J/I,S/J)_0$.  
  Now, among the relations on generators of $J/I$, there are the Koszul relations of the form $a_1(a_2) - a_2(a_1)$ for $a_i \in J$, which $\phi$ obviously send to $0 \mod J$.  Apart from these, then, the only nontrivial relation among generators of $J/I$ comes from writing the determinant or permanent of the $r\times r$ matrix $(x_{i,j})$ as an $S$-linear combination of the generators of $J$.  In fact, it is easy to choose such an $S$-linear combination (uniquely up to the Koszul relations), since every term of the $r\times r$ determinant (or permanent) contains at least one position in the $\bf{0}$ block and hence one generator of $J$.  
  Then the fact that $\phi$ sends the $r\times r$ determinant or permanent to zero means that
  \begin{equation}\label{eqn:rel}
  \sum_{i,j}\pm (b_i c_j)\phi(x_{i,j}) = 0,
\end{equation}
where the sum above has one term for each entry $(i,j)$ in the upper left zero block.  Here, $b_i$ is the determinant or permanent of the submatrix of $B$ gotten by deleting the $i$th row, and $c_j$ is the determinant or permanent of the submatrix of $C$ gotten by deleting the $j$th column.  The reason that no other terms of the determinant or permanent appear is that every other term contains at least two positions in the $\mathbf{0}$ block, and hence, after factoring out some generator of $J$, will still vanish modulo~$J$.  So \eqref{eqn:rel} is the only constraint on the images of the generators of~$J$.

Now, if instead $r<n$, we simply note that 
the nontrivial relations among the images of the $x_{i,j}$'s are just the relation \eqref{eqn:rel} obtained by replacing~\eqref{eq:matrix} by each of its $r\times r$ anchored submatrices.  (The non-anchored submatrices do not add any constraints, because each term of a non-anchored $r\times r$ determinant or permanent contains at least two positions in the ${\bf 0}$ block and so is necessarily sent to $0$ in $S/J$.)  
Then condition \eqref{eqn:rel} applied to the $r\times r$ submatrices of $\Lambda$ can be rewritten as the condition that the determinants or permanents of the $r\times r$ anchored submatrices of the $m\times n$ matrix in \eqref{eq:abc} vanish, where $A=(\phi(x_{i,j}))$.

Now we can easily count dimensions for $\Hom_{S/I}(J/I,S/J)_0$.  We have just shown that the dimension of the space of choices of $\phi(x_{i,j})$, for $i\le m\!-\!s$ and $j\le s\!+\!1\!+\!n\!-\!r$, is precisely $a_{\det}$ (respectively $a_{\perm}$).  Next, we have exactly $k+1$ dimensions for choosing each $\phi(f_i)$, since after all the $f_i$ do not appear at all in \eqref{eqn:rel}, and $\dim(S/J)_1=k+1$.
So   the dimension of space of choices for the $\phi(f_i)$ is $(k+1)(\kappa(s)-k)$.  Adding, we have proved Lemma~\ref{lemma:abc}.
\end{proof}

\noindent We now prove part (i) of each theorem by applying Lemma~\ref{lemma:abc} to some carefully chosen points~$[\Lambda]$.
\begin{proof}[Proof of Theorem~\ref{thm:ts}(i)]  To prove Theorem~\ref{thm:ts}(i), we need to exhibit, for any $k$ and $s$ as in the statement of Theorem~\ref{thm:ts},  a matrix of the form 
\eqref{eq:matrix}
such that the dimension $a_{\det}$ defined in Lemma~\ref{lemma:abc} is exactly $\delta(s)$.  Then \ref{thm:ts}(i) would follow by upper semicontinuity.

First, some notation: for distinct variables $z_i$ and $z_j$, define matrices
\begin{equation}\label{eq:bc}
B(z_i,z_j)=\left(\begin{array}{c c c} 
  z_i&&\\
z_j&\ddots&\\
&\ddots&z_i\\
&&z_j\\
\hline
&&\\
&\text{\Large 0}&\\
&&
\end{array}
\right)
\quad \textrm{and}
\quad C(z_i,z_j)=\left(\begin{array}{c c  c c|}
  z_i&z_j&&\\
&\ddots&\ddots&\\
&&z_i&z_j\\
\end{array}
\ \text{\Large 0}\ 
\right)
\end{equation}
of dimensions $(m-s)\times(r-s-1)$ and $s\times (s+1+n-r)$ respectively.  (Thus, by slight abuse of notation, the dimensions will depend on $s$.)
Now let's specify the form that $\Lambda$ should take, in four cases.  

If $s=0$, so the matrices $C$ and $D$ don't appear, we let 
$B=B(z_0,z_1)+B',$ where $B'$ is any matrix whose entries are linear forms not involving $z_0$ or $z_1$, and such that the linear span of the forms appearing in $B$ has full dimension $k+1$.

Similarly, if $s=r-1$, so that the matrices $B$ and $D$ don't appear, then we let $C=C(z_0,z_1)+C'$,
where $C'$ is any matrix whose entries are linear forms not involving $z_0$ or $z_1$, and such that the linear span of the forms appearing in $C$ has full dimension $k+1$.

Next, if $s\ne 0, r-1,$ and $k=1$, we let $B=B(z_0,z_1)$ and $C = C(z_0, z_1)$.  There are no restrictions on $D$.

Finally, if $s\ne 0, r-1,$ and $k\ge2$, we let $B=B(z_0,z_1)+B'$ and $C=C(z_1,z_2)+C'$, for any matrices $B'$ and $C'$ that do not involve $z_0,~z_1,$ or $z_2$, and furthermore do not involve any variables in common.  There are again no restrictions on $D$, except that $B'$, $C'$, and $D$ must be chosen so that the linear span of the forms appearing in $\Lambda$ has full dimension $k+1$.

\begin{ex}\label{ex:detform}
For $k=4$, $s=2$, $r=m=5$, and $n=6$, the following $5\times 6$ matrix over $\KK=\mathbb{C}$, say, has the desired form.  The $*$ entries represent arbitrary linear forms in $z_0,z_1,z_2,z_3,$ and $z_4$.
$$\left(
\begin{array}{c c c c c c}
0 &0& 0& 0 &z_0 &z_3\\
0 &0& 0& 0 &z_1\!-\!z_3 &z_0\!+\!3z_3\\
0 &0& 0& 0 &5z_3 &z_1\!+\!11z_3\\
z_1& z_2 &0& 0 &* &*\\
0& z_1&z_2& z_4 &* &*
\end{array}
\right)
$$
\end{ex}
\noindent Now in each case, we are done if we can show that the space of matrices $A$ satisfying that the $r\times r$ anchored minors of the matrix $Q$ in \eqref{eq:abc} vanish has dimension exactly $\delta(s)$.  In fact, we can identify a space of dimension $\delta(s)$: take $A$ to be any sum of a matrix in the $\KK$-column span of $B$ and a matrix in the $\KK$-row span of $C$.  
The resulting matrix $Q$ still has rank $<r$. Furthermore, $B$ and $C$ have full rank, but on the other hand no nonzero matrix is both in the $\KK$-column span of $B$ and the $\KK$-row span of $C$.  So the space of matrices $A$ obtained in this way forms a $\KK$-vector space of dimension exactly $(r-s-1)(s+1+n-r)+s(m-s)=\delta(s)$.  We have thus reduced to proving the converse: if the anchored $r\times r$ minors of $Q$ vanish, then $A$ is the sum of a matrix in the $\KK$-column span of $B$ and a matrix in the $\KK$-row span of~$C$.

The degenerate cases $s=0,~r-1$ are thus an immediate consequence of the lemma below, whose proof we postpone to the end of the section.

\begin{lemma}\label{lemma:rowdet}
Let $2\leq m\leq n$.
Consider an $m\times n$ matrix 
$$P=\left(\begin{array}{c c c}
  &\vec a&\\
  \hline
  &&\\
  &C&\\
  &&
  \end{array}
\right)$$
whose entries are linear forms $z_0,\ldots,z_k$, where $\vec a=(a_i)$ is a $1\times n$ matrix and $C$ is an $(m-1)\times n$ matrix.
Suppose $C=C(z_0,z_1)+C'$, where $C(z_0,z_1)$ is an $(m-1)\times n$ matrix as shown in \eqref{eq:bc}, and $C'$ only involves $z_2,\ldots, z_k$.  

If the maximal minors of $P$ vanish, then $\vec a$ must lie in the $\KK$-linear rowspan of~$C$.
\end{lemma}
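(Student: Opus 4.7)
My plan is a two-phase argument.

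\textbf{Phase 1 (reduction).} I decompose $\vec a = \vec a^{(01)} + \vec a^{(2+)}$, where $\vec a^{(01)}$ has entries in $\KK z_0 \oplus \KK z_1$ and $\vec a^{(2+)}$ has entries in $\KK z_2 \oplus \cdots \oplus \KK z_k$. Substituting $z_2 = \cdots = z_k = 0$ preserves vanishing of the maximal minors and kills $C'$, so the hypothesis reduces to vanishing of the maximal minors of the matrix with first row $\vec a^{(01)}$ and remaining rows given by $C(z_0, z_1)$. Using the computation $\det C(z_0, z_1)[\{1, \ldots, m\} \setminus \{\lambda\}] = z_0^{\lambda - 1} z_1^{m - \lambda}$ together with the fact that columns $j > m$ of $C(z_0, z_1)$ are zero, a short inspection of the resulting minors shows $\vec a^{(01)}$ lies in the $\KK$-rowspan of $C(z_0, z_1)$. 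Subtracting the corresponding $\KK$-combination of rows of the full $C$ from $\vec a$ replaces the top row with one involving only $z_2, \ldots, z_k$, still satisfying the hypothesis.

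\textbf{Phase 2.} Now assume $\vec a$ involves only $z_2, \ldots, z_k$; I aim to show $\vec a = 0$. For each $j > m$, I expand the minor on columns $\{1, \ldots, m - 1, j\}$ along the top row. Since column $j$ of $C$ sits in the zero block of $C(z_0, z_1)$ and so has no $z_0$, each cofactor $M_l$ with $l < m$ has $z_0$-degree at most $m - 2$, while $M_m = \det C[\{1, \ldots, m - 1\}] = z_0^{m - 1} + (\text{lower order in } z_0)$. Because $\vec a$ is free of $z_0$, the coefficient of $z_0^{m-1}$ in the minor equals $(-1)^{m+1} a_j$, forcing $a_j = 0$. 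For $l = 1, \ldots, m$ I then consider the single minor on columns $\{1, \ldots, m\}$ and extract the coefficient of the degree-$(m - 1)$ monomial $z_0^{l - 1} z_1^{m - l}$. The key lemma needed is that in each cofactor $M_\lambda = \det C[\{1, \ldots, m\} \setminus \{\lambda\}]$, this coefficient equals $\delta_{l, \lambda}$; granting this, the coefficient of $z_0^{l-1} z_1^{m-l}$ in the minor equals $(-1)^{l+1} a_l$, forcing $a_l = 0$.

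Proving this lemma is the main obstacle. Since each entry of $C$ is linear and introducing any $C'$ factor lowers the $(z_0, z_1)$-degree of a product, the only terms of $M_\lambda$ contributing to $z_0^{l - 1} z_1^{m - l}$ come from expansions $\mathrm{sgn}(\sigma)\prod_i \tilde C_{i, \sigma(i)}$ in which every factor picks up a pure $z_0$ or $z_1$ from $C(z_0, z_1)$. A case analysis of the $z_0$/$z_1$ positions in $C(z_0, z_1)[\{1, \ldots, m\} \setminus \{\lambda\}]$ shows that rows $\lambda - 1$ and $\lambda$ each have exactly one eligible column (namely the diagonal one), and a short inductive argument then fixes the remaining rows to their diagonals as well. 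Hence the identity is the unique contributing permutation, yielding the single monomial $z_0^{\lambda - 1} z_1^{m - \lambda}$. This establishes the lemma and completes the proof.
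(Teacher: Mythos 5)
Your proposal is correct and follows essentially the same route as the paper's proof: both extract coefficients of the monomials $z_0^{i}z_1^{m-i}$ and $z_0^{i-1}z_1^{m-i}z_j$ from the maximal minors, using the bidiagonal structure of $C(z_0,z_1)$ (whose cofactors are $z_0^{\lambda-1}z_1^{m-\lambda}$) to first remove the $z_0,z_1$-part of $\vec a$ by row operations and then force the remaining coefficients to vanish. Your packaging differs only cosmetically (specializing $z_2=\cdots=z_k=0$ in Phase 1, and treating columns $j>m$ via a $z_0$-degree bound on the minor with columns $\{1,\ldots,m-1,j\}$ rather than the paper's expansion after $a_1=\cdots=a_m=0$ is known), but the substance is the same.
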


\noindent Thus, we may now assume $1\leq s \leq r-2$.  We have two remaining cases: $k=1$ and $k\ge 2$.  Let us first assume $k\ge 2$.  

By subtracting columns of $B$ from $A$, we may assume that in the first $r-s-1$ rows of $A$, no $z_0$ appears.  Also, let us add a copy of the $(r-s)$ row to each row $r-s+1,\ldots, m-s$.  Then we may assume that each row $r-s+1,\ldots, m-s$ in the matrix $B$ has no $z_0,z_1$'s appearing except for a single $z_1$ in the last column.  We now claim that each row of $A$ is in the $\KK$-linear span of the rows of $C$. 

First, let $Q'$ be any $r\times n$ submatrix of $Q$ obtained by choosing the first $r-s-1$ rows, one middle row, and the last $s$ rows of $Q$.  Now consider any maximal submatrix $\tilde Q$ of $Q'$ that involves the last $r-s-1$ columns of $Q'$.  Write
\begin{equation}\label{eqn:Q'}
  \tilde Q=\left(\!\begin{array}{c c}
	\tilde A &\tilde B\\
	\tilde C&\mathbf{0}
\end{array}\!\right),
\end{equation}
where $\tilde A$, $\tilde B$, and $\tilde C$ are submatrices of $A,B,$ and $C$, respectively.  
For $i=1,\ldots,r-s$, let $b_i$ be the determinant of the matrix gotten from $\tilde B$ by deleting its $i$th row, and let $\tilde c_i$ be the determinant of the matrix gotten by stacking the $i$th row of $\tilde A$ to $\tilde C$. Let us show that each $\tilde c_i=0$.

Since $Q$ had rank at most $r-1$ by assumption, we have $\det(\tilde Q)=0.$  Note that
$$0 = \det(\tilde Q)=b_1\tilde c_1 - b_2\tilde c_2 +  \cdots \pm b_{r-s}{\tilde c}_{r-s}.$$
Now each $b_i$ is of the form $z_0^{i-1}z_1^{r-s-i}$ plus terms that have lower total degree in $z_0$ and $z_1$.  Also, each ${\tilde c}_i$, except possibly ${\tilde c}_{r-s}$, has no $z_0$ term appearing, by assumption.   
Then by inspecting the degree in $z_0$, we conclude that $\tilde c_{r-s}=0$.  

Next, we claim that no other ${\tilde c}_i$ has $z_1$ appearing: for if $z_1$ appears with highest degree $d$ in ${\tilde c}_i$, 
 then the factor $z_0^{i-1}z_1^{r-s-i+d}$ would appear in the summand $b_i {\tilde c}_i$ but nowhere else.  So $z_0$ and $z_1$ do not appear in any ${\tilde c}_i$.  Finally, by inspecting the $(z_0,z_1)$-degree of each summand, we conclude that each ${\tilde c}_i=0.$  

Now, by ranging over all possible choices of $Q'$ and $\tilde Q$, we have that 
$$
\rank \left(
\begin{array}{c}
A\\
C
\end{array}\right)=\rank C.
  $$
  Then we may apply 
Lemma \ref{lemma:rowdet} to conclude that
 the rows of $A'$ are actually $\KK$-linear combinations of the rows of $C$. 

Finally, assume $1\le s\le r-2$ and $k=1$.  Once again, we want to show that any matrix $A$ satisfying that the $r\times r$ anchored minors of \eqref{eq:abc} vanish must be a sum of a matrix in the $\KK$-column span of $B=B(z_0,z_1)$ and a matrix in the $\KK$-row span of~$C=C(z_0,z_1)$.

First, note that rows $r-s+1,\ldots, m-s$ of $B$ are zero.  So we will start by showing that for any $i$ with  $r-s+1 \le i \le m-s$, the $i^{th}$ row of $A$ is in the $\KK$-rowspan of $C$.  Indeed,  
let $Q'$ be the $r\times n$ submatrix of $Q$ obtained by choosing the first $r-s-1$ rows, the $i^{th}$ row, and the last $s$ rows of $Q$.  So the lower right $(s+1)\times(r-s-1)$ block of $Q'$ is zero.  Now the first $r-s-1$ rows of $Q'$ are $\KK(z_0,z_1)$-linearly independent and their span intersects the span of the last $s+1$ rows trivially.  Thus the last $s+1$ rows have rank at most $s$. Applying Lemma~\ref{lemma:rowdet} to these last $s+1$ rows, we are done.  Furthermore, the same argument shows that columns $s+2,\ldots,s+1+n-r$ of $A$ are $\KK$-combinations of the columns of $B$.

Now after deleting rows $r-s+1,\ldots,m-s$ and columns $s+2,\ldots,s+1+n-r$, we are left with a singular $r\times r$ matrix $Q'$.  Let $A'$ denote the 
upper left $(r-s)\times(s+1)$-submatrix of $Q'$.  After adding multiples of the columns of $B$ and the rows of $C$, we may assume that $z_0$ appears only in the lower right entry of $A'$.  But $z_0$ cannot appear there either, for otherwise the monomial $z_0^r$ would appear in $\det(Q')$ with nonzero coefficient, contradicting that $\det(Q') = 0.$  So $A'$ has only $z_1$'s.  

Next, we claim we may reduce to the case
that $z_1$'s only appear in the first row and the last column of $A'$.  This is because if $a_{i,j} = \lambda z_1$, say, is a nonzero entry elsewhere in $A'$, then we can alternately add $\lambda$-multiples of rows of $C$ and subtract $\lambda$-multiples of columns of $B$ to move $\lambda z_1$ to any other entry in the same antidiagonal.

But in this case, $z_1$'s don't appear at all, since they would each contribute a unique nonzero monomial in $z_0^i,z_1^{r-i}$ to $\det(Q')$, but $Q'$ is singular.  Thus, after adding rows of $C$ and columns of $B$, we have arrived at an equality $A'=0$, so we are done.  This finishes the proof of Theorem~\ref{thm:ts}(i).
\end{proof}

\begin{proof}[Proof of Theorem~\ref{thm:tsperm}(i)]  Just as in the determinantal case, we need to exhibit, for any $k$ and $s$ as in Theorem~\ref{thm:tsperm},  a matrix of the form 
\eqref{eq:matrix}
such that $a_{\perm}=0$ in Lemma~\ref{lemma:abc}.  Then \ref{thm:tsperm}(i) would follow by upper semicontinuity: we need only invoke the natural $(S_m\times S_n)$-action on $\fp$ gotten by permuting rows and columns that sends any standard compression component $\comp_k(\sigma,\tau)$ to the particular one shown in (\ref{fig:compression}).

First, for distinct variables $z_i,z_j,$ and $z_k$, define matrices $B(z_i,z_j,z_k)$ and $C(z_i,z_j,z_k)$ respectively as
\begin{equation}\label{eq:bcperm}
\left(\begin{array}{c c c c} 
  z_i&&&z_k\\
z_j&\ddots&\\
z_k&\ddots&z_i\\
&\ddots&z_j&z_i\\
&&z_k&z_j\\
\hline
&&&\\
\multicolumn{4}{c}{\text{\Large 0}}\\
&&&
\end{array}
\right)
\quad \textrm{and}\quad 
\left(\begin{array}{c c c  c c|c c c}
  z_i&z_j&z_k&& &&&\\
&\ddots&\ddots&\ddots&&&\text{\Large 0}&\\
&&z_i&z_j&z_k&&&\\
z_k&&&z_i&z_j&&&\\
\end{array}
\right)
\end{equation}
of dimensions $(m-s)\times(r-s-1)$ and $s\times (s+1+n-r)$ respectively.  

Let us clarify the degenerate cases of this definition: if $s=1$, then $C(z_i,z_j,z_k)$ = $(z_i~z_j~z_k~0~\cdots~0)$, and similarly if $s=r-2$, then $B(z_i,z_j,z_k)$ = $(z_i~z_j~z_k~0~\cdots~0)^T$.  If $s=2$ then $$C(z_i,z_j,z_k)=\left(\begin{array}{cccc}
z_i&z_j&z_k&0~\cdots~0\\
z_k&z_i&z_j&0~\cdots~0
\end{array}\right);$$ and similarly for $B(z_i,z_j,z_k)$ when $r-s-1=2$.

  In particular, in each case, $C(z_i,z_j,z_k)$ has to have at least $3$ columns whenever it is defined: in other words, 
if $s\ne 0$, we will always assume that $s+1+n-r\ge 3$.  Similarly, $B(z_i,z_j,z_k)$ has to have at least $3$ rows whenever it is defined: in other words, 
if $s\ne r-1$, we will always assume that $m-s\ge 3$.  This accounts for the conditions in the statement of Theorem~\ref{thm:tsperm}.
Now we treat each case in turn.

First,
suppose $s=0$, so $m\ge 3$ and the matrices $C$ and $D$ don't appear.  
Then let $B=B(z_0,z_1,z_2)+B',$ where $B'$ is any matrix whose entries are linear forms not involving $z_0,z_1,$ or $z_2$.  
Similarly, 
if $s=r-1$, so $n\ge 3$ and $B$ and $D$ don't appear,  
then we let $C=C(z_0,z_1,z_2)+C'$,
where $C'$ is any matrix whose entries are linear forms not involving $z_0,z_1,$ or $z_2$.
In both of these cases, Theorem~\ref{thm:tsperm}(i) is an immediate consequence of the following lemma, whose proof we postpone to the end of the section.

\begin{lemma}\label{lemma:rowperm}
Let $2\leq m\leq n$ but not both $m=n=2$.
Consider an $m\times n$ matrix 
$$P=\left(\begin{array}{c c c}
  &\vec a&\\
  \hline
  &&\\
  &C&\\
  &&
  \end{array}
\right)$$
whose entries are linear forms $z_0,\ldots,z_k$, where $\vec a=(a_i)$ is a $1\times n$ matrix and $C=C(z_0,z_1,z_2)+C'$ is an $(m-1)\times n$ matrix,
where $C(z_0,z_1,z_2)$ is as shown in \eqref{eq:bcperm}, and $C'$ does not involve $z_0,z_1,$ or $z_2$.  

If the maximal permanents of $P$ vanish, then $\vec a = 0$.
\end{lemma}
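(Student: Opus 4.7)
The strategy is to Laplace-expand each maximal permanent of $P$ along its top row and then use the band/cyclic structure of $C(z_0,z_1,z_2)$ to show that the resulting vanishings force all coefficients of each $a_j$ to be zero. For every $m$-subset $J \subseteq [n]$ we obtain the polynomial identity
\[
\perm(P_{[m], J}) = \sum_{j \in J} a_j \cdot \perm(C_{[m-1],\, J \setminus \{j\}}) = 0
\]
in $\KK[z_0, \ldots, z_k]$, and these are what we will exploit.

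I would first show that $[z_i]a_j = 0$ for every $i \in \{0,1,2\}$ and every $j \in [n]$. Because $C'$ involves only $z_\ell$ for $\ell \ge 3$, the pure $z_0, z_1, z_2$-part of $\perm(C_{[m-1], J \setminus \{j\}})$ equals $\perm(C(z_0,z_1,z_2)_{[m-1], J \setminus \{j\}})$, so extracting the pure $z_0, z_1, z_2$-part of the identity above gives
\[
\sum_{j \in J} a_j^{(012)} \cdot \perm(C(z_0,z_1,z_2)_{[m-1], J \setminus \{j\}}) = 0,
\]
where $a_j^{(012)} := [z_0]a_j \cdot z_0 + [z_1]a_j \cdot z_1 + [z_2]a_j \cdot z_2$. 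Columns of $C(z_0, z_1, z_2)$ beyond the first $m$ are zero, so two kinds of $J$ give nontrivial information. For $J = [m-1] \cup \{j_0\}$ with $j_0 > m$, only the $j = j_0$ summand survives, and a direct check (via the matching $i \mapsto i$) shows $\perm(C(z_0,z_1,z_2)_{[m-1], [m-1]})$ contains $z_0^{m-1}$ and is hence a nonzero polynomial, forcing $a_{j_0}^{(012)} = 0$. For $J = [m]$ we are left with $\sum_{j=1}^m a_j^{(012)} M_j = 0$, where $M_j := \perm(C(z_0,z_1,z_2)_{[m-1], [m]\setminus\{j\}})$.

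The heart of the proof, and its main obstacle, is showing that this last equation forces $a_j^{(012)} = 0$ for all $j \in [m]$. Expanded out, it is a linear system in the $3m$ unknowns $[z_i]a_j$ ($i \in \{0,1,2\}$, $j \in [m]$), which I would solve by cascading elimination: pure monomials like $z_i^{m-1}$ typically appear in only one $M_j$, immediately killing several of the unknowns, after which residual pairs of relations combine to produce equations of the form $2[z_i]a_j = 0$, vanishing under $\chara \KK \neq 2$. The $m = n = 3$ case, where $M_1 = z_1^2 + z_0 z_2$, $M_2 = z_0 z_1 + z_2^2$, and $M_3 = z_0^2 + z_1 z_2$, is illustrative: the monomials $z_0^3, z_1^3, z_2^3$ immediately kill six of the nine unknowns, and the remaining three pairs combine into relations $2[z_i]a_j = 0$. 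The hypothesis ``not both $m = n = 2$'' ensures $C(z_0,z_1,z_2)$ is defined so that this step is meaningful.

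Having established $[z_i]a_j = 0$ for $i \in \{0,1,2\}$ and all $j$, we recover the $z_\ell$-coefficients ($\ell \geq 3$) as follows. Returning to the unspecialized identity for a well-chosen $J \ni j_0$, extract the coefficient of $z_\ell \cdot \mu$ for some degree-$(m-1)$ monomial $\mu$ in $z_0, z_1, z_2$. Contributions of the form $[z_i]a_j \cdot (\cdots)$ with $i \in \{0,1,2\}$ vanish by the previous step, leaving
\[
\sum_{j \in J} [z_\ell]a_j \cdot [\mu] \perm(C(z_0,z_1,z_2)_{[m-1], J\setminus\{j\}}) = 0.
\]
If $\mu$ is a ``signature'' monomial for $j_0$ relative to $J$, this yields $[z_\ell] a_{j_0} = 0$. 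For $j_0 > m$ any monomial of $\perm(C(z_0,z_1,z_2)_{[m-1],[m-1]})$ serves (taking $J = [m-1]\cup\{j_0\}$), since the other $J\setminus\{j\}$ contain the zero column $j_0$; for $j_0 \in [m]$ (with $J = [m]$), the existence of such signature monomials for each $M_{j_0}$ follows from the same monomial analysis of $M_j$ carried out in the previous paragraph.
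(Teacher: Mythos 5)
Your outline for $m\ge 3$ follows the same basic strategy as the paper (expand each maximal permanent along the top row, kill the $z_0,z_1,z_2$-coefficients of the $a_j$ by inspecting monomials pure in $z_0,z_1,z_2$, then recover the remaining coefficients), and your final stage via signature monomials such as $z_0^{j-1}z_1^{m-j}$ for $M_j$ is sound and mirrors the determinantal Lemma~\ref{lemma:rowdet}. But the step you yourself call the heart --- that $\sum_{j=1}^m a_j^{(012)}M_j=0$ forces $a_j^{(012)}=0$ --- is only verified for $m=3$ and otherwise asserted as ``cascading elimination.'' For general $m$ the pure powers $z_0^m,z_1^m,z_2^m$ kill only three of the $3m$ unknowns (in your $m=3$ illustration they kill three, not six, of the nine), and the relations that finish the job come from the mixed monomials $z_0^iz_1^{m-1-i}z_2$ and $z_0z_1^{m-3}z_2^2$, whose coefficients in the cyclic tridiagonal permanents $M_j$ are the residues $\overline{m-1-i}$ computed in the paper's Lemma~\ref{lemma:permterms}. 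In characteristic $p>2$ some of these integer multiplicities vanish, so ``residual pairs combine to give $2[z_i]a_j=0$'' is not automatic: one must check that the elimination still closes (it does, because consecutive multiplicities differ by $1$, which is exactly how the paper combines them with the two-variable relations $\lambda_{i,0}+\lambda_{i+1,1}=0$, $\lambda_{i,1}+\lambda_{i+1,2}=0$ to reach $2\lambda=0$). Your $m=3$ computation, where all multiplicities equal $1$, does not see this issue, so the core of the lemma is not yet proved for general $m$.

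Second, the case $m=2$, $n\ge 3$ is within the lemma's scope, but your argument breaks there: $C(z_0,z_1,z_2)$ is the single row $(z_0~z_1~z_2~0\cdots0)$, so the claim that its columns beyond the first $m$ are zero is false, the claim that for $J=[m-1]\cup\{j_0\}$ with $j_0>m$ only the $j=j_0$ summand survives fails for $j_0=3$, and the lone equation from $J=[m]$, namely $a_1^{(012)}z_1+a_2^{(012)}z_0=0$, has nonzero solutions. The paper disposes of $m=2$ by a different route entirely, invoking the Laubenbacher--Swanson primary decomposition of the $2\times 2$ permanental ideal \cite{laubenbacher}; within your framework one could instead combine the specialized equations from all three $2$-subsets of $\{1,2,3\}$, which again yields $2\lambda=0$, but as written this case is unaddressed (you treat the hypothesis excluding $m=n=2$ only as ensuring $C(z_0,z_1,z_2)$ is defined).
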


\noindent So we are done with 
the cases $s=0$ and $s=r-1$,
modulo the lemma.

So we may assume for the rest of the proof that $1\le s \le r-2$.  
We let $B=B(z_0,z_1,z_2)+B'$ and $C=C(z_3,z_4,z_5)+C'$, for any matrices $B'$ and $C'$ that do not involve $z_0,\ldots,z_5$, and furthermore do not involve any variables in common.  
Let $D$ be any $s\times(r-s-1)$ matrix of linear forms.  Again, we want to show $A=0$ if the $r\times r$ anchored permanents of $Q$ in~\eqref{eq:abc} are zero.

See \eqref{eq:gencase} for a picture of the matrix $Q$. In this picture, $A$ has been broken into four submatrices. 
If $B$ or $C$ have only one or two columns (respectively rows), then they should be interpreted as having the degenerate forms described above. 
(We have also suppressed $B'$ and $C'$ in this picture for clarity.)

\begin{equation}\label{eq:gencase}
\begin{blockarray}{ccccccccc}
     & &s\!+\!1&&&n\!-\!r&&r\!-\!s\!-\!1&       \\
\begin{block}{c(cccc|c|ccc)}
                 &&&&&&z_0&&z_2\\
              r\!-\!s&&&&&&z_1&z_0& \\
              &&&&&&z_2&z_1&z_0\\
              &&&&&&&z_2&z_1\\ \cline{2-9}
              m\!-\!r&&&&&&&&\\ \cline{2-9}
              &z_3&z_4&z_5&&&&&\\
              s&&z_3&z_4&z_5&&&\text{\Large 0}&\\
              &z_5&&z_3&z_4&&&&\\
\end{block}
\end{blockarray}
\end{equation}

\noindent We break into the following two cases:
\begin{itemize}
	\item $s=1$ or $s=r-1$, and
	\item $1<s<r-2$.
\end{itemize}

First, suppose $s=1$.  So $C$ is a row matrix.
For this case, we need the following lemma, which we will prove at the end of the section.

\begin{lemma}\label{lemma:2case}
Let $r \ge 3$, and suppose $Q'$ is an $m\times r$ matrix with entries in $\KK[z_0,\ldots,z_k]_1$ of the form
\begin{equation*}
Q'=\left(\!\begin{array}{c |c}
     &\cr
	A'  &\quad B\quad\cr
	&\cr
	\hline
	\ell_1~ \ell_2&0
\end{array}\!\right).
\end{equation*}
Here $\ell_1$ and $\ell_2$ are nonzero linear forms in $z_3,\ldots,z_k$ that are not scalar multiples of each other, and $B=B(z_0,z_1,z_2) + B'$ where $B(z_0,z_1,z_2)$ is as in~\eqref{eq:bcperm} and 
$B'$ does not contain $z_0,z_1$, or $z_2$.  

If the $r\times r$ permanents of $Q'$ vanish, then for each $i=1,\ldots,m-1$, the $i^{th}$ row of $A'$ is of the form $(a_i\ell_1~-\!a_i\ell_2)$ for some $a_i\in \KK$.
\end{lemma}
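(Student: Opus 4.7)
The plan is to fix an index $i\in\{1,\ldots,m-1\}$ and extract the desired form of the $i$-th row of $A'$ by selecting specific $r\times r$ permanental minors of $Q'$ and tracking the coefficients of carefully chosen monomials in $z_0$.  Any $r\times r$ submatrix $Q''$ of $Q'$ containing the bottom row satisfies
\[
\perm(Q'') \;=\; \ell_1\,P_1(Q'')+\ell_2\,P_2(Q''),
\]
where $P_1$ and $P_2$ are $(r-1)\times(r-1)$ permanents of the submatrix of $Q''$ obtained by deleting the bottom row together with column $1$ or column $2$, respectively.  Since each $\perm(Q'')$ vanishes by hypothesis, we obtain a family of identities $\ell_1\,P_1+\ell_2\,P_2=0$ to exploit.

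The first step would be to show that every entry $a_{i,j}$ of $A'$ is supported on $z_3,\ldots,z_k$.  I would choose $Q''$ to consist of row $i$, the bottom row, and the first $r-2$ ``pattern'' rows of $B$ (if $i$ itself lies in this pattern block, swap $i$ with one of the pattern rows and make the obvious adjustment), and expand $P_1$ along its first column as $P_1=\sum_j a_{j,2}\,\perm(B^{(j)})$, where $B^{(j)}$ is the $(r-2)\times(r-2)$ submatrix of $B$ obtained by deleting row $j$.  For $j=i$ outside the pattern block, $\perm(B^{(i)})$ contains the diagonal monomial $z_0^{r-2}$ with coefficient $1$ and no higher-degree terms in $z_0$; for all other $j$, $\perm(B^{(j)})$ has $z_0$-degree at most $r-3$, because one of its rows comes from outside the $B(z_0,z_1,z_2)$ pattern.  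The coefficient of $z_0^{r-1}$ in the vanishing $\ell_1\,P_1+\ell_2\,P_2=0$ then reads $\alpha\,\ell_1+\beta\,\ell_2=0$ in $\KK[z_3,\ldots,z_k]$, where $\alpha,\beta\in\KK$ are the $z_0$-coefficients of $a_{i,2}$ and $a_{i,1}$; linear independence of $\ell_1,\ell_2$ forces $\alpha=\beta=0$.  Permuting the roles of $z_0,z_1,z_2$ via different row selections from the circulant pattern of $B(z_0,z_1,z_2)$ eliminates the remaining two variables.

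With $a_{i,1},a_{i,2}\in\KK[z_3,\ldots,z_k]$ established, the second step is to return to the same $Q''$ and read off the coefficient of $z_0^{r-2}$ in $\ell_1\,P_1+\ell_2\,P_2=0$.  Every summand other than the $j=i$ term contributes in $z_0$-degree strictly less than $r-2$, since the relevant $a_{j,\cdot}$'s no longer carry any $z_0$ and $\perm(B^{(j)})$ already has $z_0$-degree at most $r-3$ for $j\ne i$.  The surviving contribution is therefore $a_{i,2}$ in $P_1$ and $a_{i,1}$ in $P_2$, yielding
\[
a_{i,2}\,\ell_1 + a_{i,1}\,\ell_2 \;=\; 0 \quad\text{in } \KK[z_3,\ldots,z_k].
\]
Since $\ell_1$ and $\ell_2$ are linearly independent, hence coprime, linear forms in $\KK[z_3,\ldots,z_k]$, this forces $a_{i,1}=a_i\,\ell_1$ and $a_{i,2}=-a_i\,\ell_2$ for some $a_i\in\KK$, which is the conclusion of the lemma.

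The main obstacle I anticipate is careful bookkeeping.  Both $A'$ and $B'$ are essentially arbitrary, apart from the requirement that $B'$ avoid $z_0,z_1,z_2$, so one must confirm that every cross-term coming from $B'$ or from $A'$-rows $j\ne i$ really does land in strictly lower total $z_0$-degree than the isolated terms used above.  A secondary subtlety is the degenerate shape of $B(z_0,z_1,z_2)$ when $r-s-1\in\{1,2\}$, in which cases the circulant is truncated as described just after~\eqref{eq:bcperm}; one must verify by inspection that enough of the leading $z_0^{r-2}$ structure survives in these small-$r$ cases for the monomial-extraction argument to still go through.
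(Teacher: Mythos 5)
Your skeleton (expand each $r\times r$ permanent through the bottom row as $\ell_1P_1+\ell_2P_2$, extract top-degree coefficients, finish by coprimality of $\ell_1,\ell_2$) is fine for the rows of $A'$ lying below the tridiagonal block, but there is a genuine gap exactly at the hard part of the lemma: the rows that meet the pattern of $B(z_0,z_1,z_2)$. Your extraction isolates row $i$ only if the other selected rows contain a full transversal of the chosen variable. In the pattern \eqref{eq:bcperm} the $z_0$'s occupy exactly one position in each of rows $1,\ldots,r-2$, so the unique selection whose complement carries $z_0^{r-2}$ is $\{1,\ldots,r-2\}$, and the coefficient-of-$z_0^{r-1}$ argument only applies to rows $i\notin\{1,\ldots,r-2\}$; likewise the unique $z_1$-transversal uses rows $2,\ldots,r-1$ and the unique $z_2$-transversal uses rows $1,3,\ldots,r-1$. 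Hence ``permuting the roles of $z_0,z_1,z_2$'' never reaches rows $3,\ldots,r-2$ at all, and for rows $1$, $2$, $r-1$ it eliminates only one of the three variables each, so Step 1 (that $A'$ is supported on $z_3,\ldots,z_k$) is not proved for the pattern rows. This is not bookkeeping: when $m=r$ (which does occur in the application of the lemma inside the proof of Theorem~\ref{thm:tsperm}) every row of $A'$ is a pattern row and your argument yields essentially nothing. Step 2 then inherits the gap twice: the $z_0^{r-2}$-coefficient computation needs all $a_{j,\cdot}$ with $j\le r-2$ to be $z_0$-free (otherwise $a_{j,2}\perm(B^{(j)})$ also contributes in $z_0$-degree $r-2$), and even granting Step 1 in full, the relation $a_{i,2}\ell_1+a_{i,1}\ell_2=0$ is only obtained for rows that can be isolated by a single-variable transversal, i.e.\ again not for $3\le i\le r-2$.

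For comparison, the paper sidesteps this by a different first move: reduce modulo $\ell_2$ (resp.\ $\ell_1$), so that the bottom row becomes $(\overline{\ell_1},0,\ldots,0)$ and each permanent through it factors as $\overline{\ell_1}\cdot\perm(P)$ with $P$ a maximal submatrix of $Q'$ with the last row and first column deleted; then Lemma~\ref{lemma:rowperm}, applied over the smaller polynomial ring, shows every entry of the second (resp.\ first) column of $A'$ is a scalar multiple of $\ell_2$ (resp.\ $\ell_1$). Lemma~\ref{lemma:rowperm} is precisely the tool that controls the pattern rows, and it does so via the mixed monomials of Lemma~\ref{lemma:permterms} (e.g.\ $z_0^iz_1^{m-1-i}z_2$) together with $\chara\KK\neq 2$ --- exactly the kind of analysis your single-variable leading-coefficient extraction avoids. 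After that reduction, a single linear-independence statement for the maximal subpermanents of $B$, now with scalar coefficients, gives the sign relation for all rows simultaneously. To salvage your route you would need an analogue of that mixed-monomial argument for the rows carrying $z_0,z_1,z_2$; as written, the proposal does not contain it.
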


\noindent Assuming the lemma holds, let us prove that $A=0$.  By applying Lemma~\ref{lemma:2case} to each $m\times r$ submatrix of~$Q$ gotten by choosing any two of the first three columns and the last $r-2$ columns, we see that each pair of the first three columns has entries that satisfy the sign relation as in Lemma~\ref{lemma:2case}.  But that is impossible unless each entry in the first three columns of $A$ is zero.  

Let $Q_{m,1}$ denote the matrix gotten by deleting the $m^{th}$ row and first column of $Q$.  Now, since the first column, say, of~$Q$ is zero except for the last entry which is nonzero, and $Q$ has vanishing $r\times r$ anchored permanents, if follows that the $(r-1)\times (r-1)$ anchored permanents of the matrix $Q_{m,1}$ vanish.  Now applying Lemma~\ref{lemma:rowperm} to (the transpose of) each $(m-1)\times (r-1)$ submatrix of $Q_{m,1}$ that involves the last $r-2$ columns shows that each column of $A$ is zero.

The case 
$s=r-2$
is exactly analogous.  In this case, the roles of $B$ and $C$ are simply reversed: now $B$ is a column matrix with at least three nonzero entries. 
Then we apply Lemma~\ref{lemma:2case} again to conclude that the first three rows of $A$ must be zero, and then apply Lemma~\ref{lemma:rowperm} to each $(m-1)\times (r-1)$ submatrix of $Q_{1,n}$ to show that every row of $A$ is zero.

For the final case, let $1<s<r-2$.  In this case, our assumptions imply that $B$ (respectively $C$) has at least two columns (respectively rows), see~\eqref{eq:gencase}.   We will call the $m-r$ rows and $n-r$ columns in~\eqref{eq:gencase}, if they are present, the {\em middle} rows and columns.  In the diagram, $A$ has been broken into four blocks, and we will show one by one that they are $0$. 

First suppose $r=m=n$, i.e.~the middle rows and columns don't appear.
As in the determinantal case, 
for $i=1,\ldots,m-s$, let $b_i$ be the permanent of the matrix gotten by deleting the $i^{th}$ row of $B$, and let  $\tilde c_i$ be the permanent of the matrix gotten by stacking the $i^{th}$ row of $A$ to $C$.  Then $$0 = \perm(Q)=b_1\tilde c_1 + b_2\tilde c_2 +  \cdots + b_{m-s}{\tilde c}_{m-s}.$$
Considering the terms of this equation of the form $$z_0^{i-1}z_1^{r-s-i}\cdot\textrm{(any monomial not involving the variables appearing in }B)$$ shows that,
after zeroing out the variables in $A$ that appear in $B$,
$\tilde c_i=0$. 
Then by Lemma \ref{lemma:rowperm}, $A$ only contains variables from $B$. 

But a symmetric argument shows that $A$ only contains variables from $C$.  Since no variables appear in both $B$ and $C$ by assumption, we must have $A=0$.

Now let us drop the assumption that $r=m=n$.  Let $Q'$ be the $r\times r$ submatrix of $Q$ gotten by removing the middle $m-r$ rows and $n-r$ columns.  Then the $r=m=n$ argument applied to $Q'$ immediately shows that the upper left $(r-s)\times(s+1)$ block of $A$ is zero.  Next, we claim that every entry in the first $r-s$ rows of $A$ is zero. Indeed, 
if in $Q'$ we replace the $(s+1)^{st}$ column with any middle column, we get an $r\times r$ matrix whose upper left $(r-s)\times s$ block is zero and whose lower left $s \times s$ block has nonzero permanent.  Then the permanent of the upper right $(r-s)\times (r-s)$ block is zero, so by Lemma~\ref{lemma:rowperm} again, the chosen middle column of $A$ is zero, proving the claim.  A symmetric argument shows that every entry in the first $s+1$ columns of $A$ is zero.

Finally, for any middle row $i$ and middle column $j$, we want to show that $a_{ij}=0$. But this is immediate from considering the matrix obtained from $Q'$ by replacing the $r-s$ row by $i$ and the $s+1$ column by $j$, and noting that the permanent of this matrix is zero yet is a nonzero multiple of $a_{ij}$.
This finishes the proof of Theorem~\ref{thm:tsperm}(i). 
\end{proof}

\begin{proof}[Proof of Theorems~\ref{thm:ts}(ii) and~\ref{thm:tsperm}(ii)]  Let us now prove part (ii) of each Theorem by showing that if $k$ is sufficiently large, the proofs of Theorems~\ref{thm:ts}(i) and~\ref{thm:tsperm}(i) can be applied to any torus fixed point of $\comp_k(s)$ or $\comp_k(\sigma,\tau)$.  When we say $k$ is sufficiently large, we mean that conditions \eqref{eq:smootha} and \eqref{eq:smoothb} are fulfilled in the determinantal case, or that conditions \eqref{eq:smoothap} and \eqref{eq:smoothbp} are fulfilled in the permanental case.

Part (ii) of each of Theorems~\ref{thm:ts} and~\ref{thm:tsperm} would then follow, because if the conditions \eqref{eq:smootha} and \eqref{eq:smoothb} or the conditions \eqref{eq:smoothap} and \eqref{eq:smoothbp} hold, then 
the bounds in \ref{thm:ts}(i) and~\ref{thm:tsperm}(i) would apply to any torus fixed point of $\comp_k(s)$, respectively $\comp_k(\sigma,\tau)$.
But the Zariski closure of the torus orbit of any point of $\comp_k(s)$ or $\comp_k(\sigma,\tau)$ certainly contains a torus fixed point.  So the bounds would hold for arbitrary points of $\comp_k(s)$, respectively $\comp_k(\sigma,\tau)$.

  Now, by the discussion in Section \ref{sec:torus}, we know that any torus fixed point $P$ of $\comp_k(s)$ or $\comp_k(\sigma,\tau)$ comes from zeroing out entries in the matrix of forms representing a standard compression space. Thus, after permuting rows and columns, we may assume that $P$ has the form \eqref{eq:matrix},
   and the entries of $B$, $C$, and $D$ are either zeroes or $*$s representing distinct variables $z_i$. Furthermore, the number of zero entries among $B,C,$ and $D$ is exactly $\kappa(s)-k$.
  
   The proofs in both the determinant and permanent cases reduce to the following combinatorics:
\begin{lemma}\label{lemma:stars}
Let $c \le p$ and $p\ge q$, and suppose we have an $p\times q$ array $B=(b_{ij})$ filled with at most $p-c$ zeroes and the rest $*$s.  Then we may permute rows and columns so that every entry $b_{i,j}$ with $i-j<c$ is a $*$.
\end{lemma}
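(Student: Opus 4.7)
The plan is to produce the row permutation first---sorting rows in non-decreasing order of zero count---and then to produce the column permutation via Hall's marriage theorem. Let $r_i$ denote the number of zeros in row $i$, so $\sum_i r_i \le p - c$, and after reindexing assume $r_1 \le r_2 \le \cdots \le r_p$.

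The crucial estimate is
\[
\sum_{i=1}^{M} r_i \le M - c \quad\text{for every } c \le M \le p.
\]
To see this, first observe that $r_i = 0$ for all $i \le c$: otherwise monotonicity would give $r_c, r_{c+1}, \ldots, r_p \ge 1$, totaling at least $p - c + 1$ and contradicting the hypothesis. Now set $S = \sum_{i=1}^{M} r_i$ and $x = r_M$. Monotonicity together with the vanishing on $[1, c]$ yields $S \le (M - c) x$, while summing the last $p - M$ terms (each $\ge x$) gives $(p - M) x \le (p - c) - S$. Eliminating $x$ between these two inequalities produces $S(p - c) \le (M - c)(p - c)$, hence $S \le M - c$.

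For each column $j$ that contains at least one zero (after the row permutation), let $m(j)$ be the smallest row index in which column $j$ has a zero. A column permutation $\sigma$ of $\{1, \ldots, q\}$ satisfying $\sigma(j) \le m(j) - c$ for every such $j$ is exactly what we want: any zero at $(i, j)$ then lands at $(\pi(i), \sigma(j))$ with $\sigma(j) \le m(j) - c \le \pi(i) - c$, as required. Such a $\sigma$ exists by Hall's theorem, whose hypothesis---namely $|\{j : m(j) \le M\}| \le M - c$ for every $M$---follows immediately from the estimate above, since the left side is bounded by the total number of zeros in the first $M$ rows. One extends $\sigma$ arbitrarily to the columns with no zeros.

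The main technical obstacle is the combinatorial estimate itself: it relies essentially on both the monotonicity induced by the row sort and on the global bound $\sum r_i \le p - c$. The naive pointwise bound $r_i \le i - c$ by itself would only yield $\sum_{i=1}^M r_i \le \binom{M-c+1}{2}$, which is too weak.
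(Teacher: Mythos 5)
Your proof is correct, but it takes a genuinely different route from the paper's. The paper argues by induction on $p+q$: if $c>p-q$, pigeonhole gives a zero-free column, which is placed last; otherwise a column with the minimal number $z$ of zeroes satisfies $z\le p-c-q+1$, so it is placed last with its zeroes in the bottom $z$ rows, and the remaining $(p-z)\times(q-1)$ array, having at most $(p-z)-c$ zeroes, is handled by induction. You instead give a direct, one-pass argument: sort rows by zero count, prove the prefix estimate $\sum_{i\le M} r_i\le M-c$ for $c\le M\le p$, and then use Hall's theorem to assign each zero-containing column $j$ a position $\sigma(j)\le m(j)-c$; the verification is sound, since each column counted in $\{j: m(j)\le M\}$ contributes a distinct zero among the first $M$ rows, and the vanishing $r_i=0$ for $i\le c$ guarantees $m(j)-c\ge 1$. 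Two minor points to tidy: the division by $p-c$ in the elimination step assumes $p>c$ (if $p=c$ there are no zeroes and the lemma is vacuous), and when $m(j)-c>q$ the constraint is vacuous, so the Hall neighborhoods should be the capped initial segments $\{1,\dots,\min(m(j)-c,q)\}$ -- your threshold condition does imply Hall for these as well. As for what each approach buys: the paper's induction is short and entirely elementary; yours is non-inductive and makes explicit where the zeroes can sit (the prefix inequality), it fits the spirit of the paper's earlier use of Hall's theorem in Lemma~\ref{lemma:hall}, and since the admissible positions form nested initial segments the matching could even be built greedily, so the full strength of Hall is not needed; notably, your argument never uses the hypothesis $p\ge q$, so it is slightly more general than the paper's.
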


\noindent Assuming the lemma, let us prove Theorem~\ref{thm:ts}(ii).  Suppose $s\ne r-1$, i.e.~$B$ is nonempty.  Then  $\kappa(s)-k < m-s-1$ by assumption, so $B$ is an $(m-s)\times (r-s-1)$ matrix with at most $m-s-2$ zeroes.  Then by Lemma~\ref{lemma:stars} applied with $c=2$, after permuting rows and columns of $B=(b_{i,j})$, the entries $b_{i,i}$ and $b_{i+1,i}$ are non-zero for all $i$.  
  Then a linear change of coordinates brings $B$ in the form required for Theorem \ref{thm:ts}(i). 
(The lemma shows that all entries above the diagonal $b_{i,i}$ are nonzero as well, but we don't need this.)

 If instead $s\ne 0$, so that the matrix $C$ is nonempty, and $k>\kappa(s)-(s+n-r)$, the same argument shows that there is a permutation of the rows and columns of $C$ so that each of the entries $c_{i,i}$, $c_{i,i+1}$ are non-zero for $i=1,\ldots,s$. 
   Again, a linear change of coordinates then brings $C$ into the form required for Theorem \ref{thm:ts}.
   
The proof of Theorem~\ref{thm:tsperm}(ii) is almost identical, except that we apply Lemma~\ref{lemma:stars} with $c=3$ instead of $c=2$, since the matrices in our special permanental form were tridiagonal instead of bidiagonal.  Namely: if $s\ne r-1$, so that the matrix $B$ appears, the assumption that $\kappa(s)-k < m-s-2$ says that $B$ has at most $m-s-3$ zeroes.  Then Lemma~\ref{lemma:stars} shows that after permuting, every entry $b_{i,j}$ with $i-j< 3$ is nonzero. Similarly, if $s\ne 0$, so that the matrix $C$ appears, the assumption that $\kappa(s)-k < n-r+s-1$ and Lemma~\ref{lemma:stars} again imply that after permuting, every entry $c_{i,j}$ with $j-i<3$ is nonzero.  Then we are again done after a linear change of coordinates.
So we have proved part (ii) of each theorem, and we are done, apart from the proofs of the supporting lemmas below.
  \end{proof}

\begin{proof}[Proof of Lemma~\ref{lemma:rowdet}]
 Let us write $a_i=\sum \lambda_{i,j} z_j$ for $\lambda\in\KK$.
Consider the determinant of the submatrix of $P$ consisting of its first $m$ columns. Inspection of the coefficient of the monomial
$z_0^{i}z_1^{m-i}$ for $1\leq i \leq m-1$ gives the equation
$$
\lambda_{i,0}=\lambda_{i+1,1}.
$$
Likewise, inspection of the monomials $z_0^m$ and $z_1^m$ give equations $\lambda_{1,1}=\lambda_{m,0}=0$.
Hence, after applying $\KK$-linear row operations to the first row of $P$, we may assume that $\lambda_{i,j}=0$ for $i\leq m$ and $j=0,1$. 

We now show that under this assumption, $\vec a=0$. Indeed, consider again the determinant of the submatrix of $P$ consisting of its first $m$ rows. Inspection of the coefficient of the monomial $z_0^{i-1}z_1^{m-i}z_j$ shows that $\lambda_{i,j}=0$ for $i \leq m$ and $j\ge 2$. So we have proved $a_1=\cdots=a_m=0$.  Finally, for any $i>m$, consider the determinant of the submatrix of $P$ consisting of columns $2,\ldots,m,i$. Expanding by the first row, this determinant is $\pm \, a_i\cdot(z_1^{m-1} + \textrm{ terms of lower degree in }z_1)$.  The product is zero and the second factor is nonzero, so $a_i=0.$  
\end{proof}

Before proving Lemma~\ref{lemma:rowperm}, we need the following calculation of the coefficients of the maximal permanents of a tridiagonal matrix.

\begin{lemma}\label{lemma:permterms}
Let $p\geq 2$ and  consider the $p\times(p+1)$ matrix 
$$
Q=\left(\begin{array}{c c c c c}
  u&v&w&&\\
&\ddots&\ddots&\ddots&\\
&&u&v&w\\
w&&&u&v\\
\end{array}
\right)
$$
for independent linear forms $u,v,w$, with $0$ at every other position. Denote the permanent of the submatrix obtained by deletion of the $i$th column by $q_i$. For $1\leq j\leq p-1$, the coefficient of $u^jv^{p-(j+1)}w$ in $q_i$ is equal to 
$$
\coeff_{u^jv^{p-(j+1)}w}(q_i)=\begin{cases}
  \overline{p-j} & i=j\\
  0 & i\neq j.
\end{cases}
$$
\end{lemma}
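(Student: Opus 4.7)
The plan is to expand the permanent $q_i$ as a sum over choices of one non-zero entry per row of the submatrix obtained by deleting column $i$ of $Q$, subject to the constraint that the chosen columns are distinct, and to count such choices whose product equals $u^j v^{p-j-1}w$. The key structural observation is: row $r \le p-1$ has non-zero entries $u, v, w$ at columns $r, r+1, r+2$, while row $p$ has non-zero entries $w, u, v$ at columns $1, p, p+1$. Since the target monomial has exactly one $w$, contributions split according to which row $r_0$ supplies it.

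The workhorse is a simple \emph{propagation} observation: if row $r \le p-1$ uses its $v$-entry at column $r+1$, then row $r+1$ cannot use its $u$-entry (column $r+1$ would be used twice), so row $r+1$ must also contribute $v$ at column $r+2$. I would first apply this in the case $r_0 = p$, where $w$ comes from the wrap-around entry at column $1$: row $1$ is forced to contribute $v$ (since column $1$ is taken), and propagation then forces every row to contribute $v$, giving $j = 0$. So this case contributes nothing when $j \ge 1$.

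Next I would handle the generic case $1 \le r_0 \le p-1$, where the $w$ comes from column $r_0+2$. Rows $1, \ldots, r_0-1$ split freely into an initial segment $\{1, \ldots, k\}$ of $u$-contributors and a terminal segment of $v$-contributors (this is exactly the free parameter), and the column missing from $\{1, \ldots, r_0\}$ is then column $k+1$. For rows $r_0+1, \ldots, p$, the analysis is forced: since column $r_0+2$ is occupied, row $r_0+1$ must contribute $u$ at column $r_0+1$, and then the propagation argument forces each of rows $r_0+2, \ldots, p$ to contribute $v$. Counting $u$'s gives $k+1$, so the monomial contributes to the coefficient of $u^{k+1} v^{p-k-2} w$ inside $q_i$ with deleted column $i = k+1$. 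Enforcing $j = k+1$ and $0 \le k \le r_0 - 1$ restricts to $r_0 \in \{j, j+1, \ldots, p-1\}$, yielding exactly $p-j$ contributions when $i = j$ and none otherwise, which gives the stated formula.

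The main point requiring care is the propagation argument and its application in the boundary case $r_0 = p$; otherwise the argument is a direct inspection of the tridiagonal structure of $Q$, with no genuine obstacle.
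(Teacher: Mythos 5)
Your proposal is correct and is essentially the paper's argument: both directly enumerate the terms of the permanent using the tridiagonal structure, observe that the single $w$ forces the selection up to one free parameter, and obtain the coefficient $\overline{p-j}$ by counting the $p-j$ admissible placements of the $w$. The only difference is bookkeeping: you case on the row carrying $w$ and read off the deleted column ($i=k+1$) at the end, while the paper fixes the deleted column $i$ first and forces $u$'s in every column to its left before pairing the remaining $u$ with the $w$.
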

\noindent Here, $\overline{p-j}$ just means the integer $p-j$ reduced modulo the characteristic of $\KK$.

\begin{proof}
  Fix $i$, and let $y$ be any monomial appearing in $q_i$ of the form $u^jv^{p-(j+1)}w$ for some $1\leq j \leq p-1$. We claim that $y$ must select $u$ from each column to the left of the $i$th column. If $i=1$, there is nothing to prove. For $i>1$, first note that $y$ selects $u$ from the first column. Indeed, if $y$ selects $w$ in the first column, then $y$ must be at least quadratic in $w$ if $i\neq p+1$, or would have the form $v^{p-1}w$ if $i=p+1$. Now, since $y$ selects $u$ from the first column, it cannot select anything else from the first row of $P$, forcing it to select $u$ from column $2$ if $i>2$. Continuing in this fashion, we see that $y$ must select $u$ in all columns to the left of the $i$th column.

  Now, inspection of $Q$ shows that for each $u$ which $y$ selects to the right of the $i$th column, it must select a $w$ lying diagonally to the upper right. Likewise, for $y$ to select a $w$ entry, it must also select a $u$ lying diagonally to the lower left and to the right of the $i$th column. Hence, the only monomial of the desired form which appears in $q_i$ is $u^iv^{p-(i+1)}w$, and it may be created by $p-j$ different choices of the position of $w$.
\end{proof}

\begin{proof}[Proof of Lemma~\ref{lemma:rowperm}]
Write $a_i=\sum \lambda_{i,j} z_j$ for $\lambda\in\KK$.
First suppose $m=2$, so $n>2$ by assumption.  In this case, we appeal to the explicit  primary decomposition of the $2\times 2$ permanents of a $2\times n$ matrix over any field proved by Laubenbacher and Swanson \cite{laubenbacher}.  Each component is isolated in this case, and the components are as follows: there are two components that correspond to zeroing out each row; and there are ${n\choose 2}$ components that correspond to zeroing out a given $2\times 2$ permanent as well as all of the other $2(n-2)$ entries.  
It immediately follows from this classification that if at least three entries of the $1\times n$ matrix $C$ are nonzero, then $\vec a$ must be 0.

Now suppose $m>2$.  We begin by considering the permanent of the submatrix of $P$ consisting of its first $m$ columns. Inspection of monomials of the form $z_0^{i}z_1^{m-i}$ and $z_1^{i}z_2^{m-i}$
gives equations 
\begin{align*}
  \lambda_{i,0}+\lambda_{i+1,1}=0&\qquad &\textrm{for } 1\leq i \leq m-1;\\
  \lambda_{i,1}+\lambda_{i+1,2}=0&\qquad &\textrm{for } 2\leq i \leq m-1;\\
  \lambda_{m,1}+\lambda_{1,2}=0&
\end{align*}
along with   $\lambda_{2,2}=\lambda_{1,1}=\lambda_{m,0}=0$.
On the other hand, inspection of the monomials of the form $z_0^iz_1^{m-1-i}z_2$ give equations
\begin{align*}
  \overline{(m-1-i)}\lambda_{i,0}+\overline{(m-2-i)}\lambda_{i+1,1}+\lambda_{i+2,2}=0&\qquad &\textrm{for } 1\leq i \leq m-2
\end{align*}
by Lemma \ref{lemma:permterms}. Likewise, inspection of the monomial 
$z_0z_1^{m-3}z_2^2$ gives the equation
$\overline{m-2}\lambda_{1,2}+\overline{m-3}\lambda_{m,1}+\lambda_{m-1,0}$.
Since $\chara(\KK)\neq 2$, this system of equations implies that $\lambda_{i,j}=0$ for $1\leq i \leq m$ and $j={1,2,3}$.
Now, for $k=m+1,\ldots,n$, consider the first $m-1$ columns and the $k^{th}$ column of $P$.  The permanent of this $m\times m$ matrix is zero, and all of the entries in the first row are zero except possibly $a_k$.  Expanding the permanent by the first row, we have $a_k=0$.
\end{proof}

\begin{proof}[Proof of Lemma~\ref{lemma:2case}]
 First, let's prove that for $i=1,2$, each form in column $i$ is a scalar multiple of $\ell_i$.  Let $Q'_{m1}$ be the matrix obtained from $Q'$ by deleting the last row and first column.  Let $z_j$ be any form in the support of $\ell_2$, and consider the image in $\KK[z_0,\ldots,z_k]/\ell_2 \cong S=\KK[z_0,\ldots,\hat{z_j},\ldots,z_k]$ of each $r\times r$ subpermanent of $Q'$ that involves the last row of $Q'$.  By expanding along the last row, this image is $\overline{\ell_1}\cdot \perm(P)$, where $P$ 
is an $(r-1)\times (r-1)$ submatrix of $Q'_{m1}$.
        
        Since $\overline{\ell_1} \ne 0$ by assumption, $\perm(P) = 0$ as an element of $S$.  Then by Lemma~\ref{lemma:rowperm} applied over the ring $S$ to $Q'_{m1}$, the first column of $Q'_{m1}$ is zero.  In other words, every entry in the second column of $Q'$ is a scalar multiple of $\ell_2$.  Similarly, every entry in the first column of $Q'$ is a scalar multiple of $\ell_1$.

	Let ${\tilde Q}$ be the $r\times r$ submatrix consisting of the first $r-1$ rows along with the last row of ${\tilde Q}$. Factoring out $\ell_1$ and $\ell_2$ from the first two columns of $\tilde Q$, we see
$$0 = \perm({\tilde Q}) = \ell_1\ell_2\cdot(\text{a $\KK$-linear combination of the maximal subpermanents of $B$}),$$  
and in fact these maximal subpermanents are actually $\KK$-linearly independent, since, for example, they each contain a distinct term of highest total $(z_0,z_1)$-degree.  So the coefficients are zero. In other words, for $i=1,\ldots,r-1$, the $2\times 2$ permanent consisting of the $i^{th}$ row of $A'$ along with $(\ell_1~\ell_2)$ is zero.  

Similarly, for $i=r,\ldots,m-1$, let ${\tilde Q}$ be the $r\times r$ submatrix consisting of the first $r-2$ rows, the $i^{th}$ row, and the last row of ${\tilde Q}$.  Then since the uppermost $(r-2)\times(r-2)$ subpermanent of $B$ is the unique maximal subpermanent of $B$ with a nonzero monomial $z_0^{r-2}$, but $\perm({\tilde Q}) = 0$, it follows again that 
the $2\times 2$ permanent consisting of the $i^{th}$ row of $A'$ along with $(\ell_1~\ell_2)$ is zero.
We conclude that each row of $A'$ is of the form $(a\ell_1~-\!a\ell_2)$ for some $a\in \KK$.

\end{proof}

\begin{proof}[Proof of Lemma~\ref{lemma:stars}]
The proof is by induction on $p+q$ with trivial base case.  First, if $c > p-q$, then by the pigeonhole principle, there is a column of all $*$s.  Moving that column to be the rightmost one and inducting on the remaining $p\times(q-1)$ matrix, we are done.  So we may assume $c\le p-q$.  Let $z$ be the smallest number of zeroes in any column of $B$.  We claim that $z \le p-c-q+1$.  If $z\le 1$ this is clear because $p-c-q+1 \ge 1$.  Otherwise, since there are at least $q z$ zeroes in $B$, we have
$p-c \ge q z \ge q+z-1$, implying the claim.

Then we can permute rows and columns so that the last column of $B$ has $*$s for its first $p-z$ entries and zeroes for its last $z$ entries.  The claim we just proved above says precisely that the last column satisfies the hypotheses of the lemma.  Furthermore, the upper left $(p-z)\times(q-1)$ submatrix of $B$ has at most $p-z-c$ zeroes, so we are done by induction.
\end{proof}

\section{Results on compression spaces}\label{sec:compression}
In this section, we will prove a number of results on compression spaces that will be used in Section~\ref{sec:smooth} to prove results on smoothness and connectedness of our Fano schemes.  We will start by proving that the subschemes $\comp_k(s)$ of $\fd$ are actually irreducible components.  This result relies on the tangent space dimension at a general point of $\comp_k(s)$, computed in Theorem~\ref{thm:ts}.  Similarly, we will prove that when the conditions on $k$ and $s$ in Theorem~\ref{thm:tsperm} hold, then $\comp_k(\sigma,\tau)$ is a component, where $|\tau|=s$.   Finally, we will prove a theorem describing precisely the embedding in projective space of the components $\comp(s)$ of $\bF_{\kappa(s)}(\dmn)$, including a computation of their degrees.

As discussed in Section~\ref{sec:torus}, for each $k \le \kappa(s)$ there is a natural map $$\rho_k(s):\Gr(k+1,\U(s))\to\bF_k(\dmn),$$ where $\U(s)$ is the pullback to $\comp(s)$ of the universal bundle on $\bF_{\kappa(s)}(\dmn)$; the image of this map is $\comp_k(s)$.  The next theorem says that $\rho_k(s)$ is generically finite.  Since the dimension of the space $\Gr(k+1,\U(s))$ is exactly the general tangent space dimension calculated in Theorem~\ref{thm:ts}, it will follow that $\comp_k(s)$ is a component (Corollary~\ref{cor:comp}).

\begin{theorem}\label{thm:rho}
  The map $\rho_k(s)$ is generically finite.  Furthermore, it is a closed embedding if and only if 
  \begin{align*}
    k&>\kappa(s)-(m-s)\qquad&\textrm{if}\ s\neq r-1;\\
   \textrm{and } k&>\kappa(s)-(n-r+s+1)\qquad&\textrm{if}\ s\neq 0.
  \end{align*}
Furthermore, the image of $\rho_k(s)$ is smooth if and only if the above bounds on $k$ hold. 
\end{theorem}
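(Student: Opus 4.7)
I would organize the proof around two main claims (generic finiteness and the closed-embedding characterization) and deduce the smoothness characterization from them. By Zariski's Main Theorem, a finite birational morphism from a smooth variety onto a normal variety is an isomorphism; combined with smooth implying normal, this gives the equivalence ``image of $\rho_k(s)$ is smooth $\iff$ $\rho_k(s)$ is an isomorphism onto its image $\iff$ $\rho_k(s)$ is a closed embedding.''

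For generic finiteness, the source has dimension $\delta(s)+(k+1)(\kappa(s)-k)$, and Theorem~\ref{thm:ts}(i) shows this same number bounds $\dim T_\eta\fd$ from above for a general $\eta\in\comp_k(s)$. Hence $\dim\comp_k(s)$ equals this number and the generic fiber is zero-dimensional. It then suffices to exhibit a single $\Lambda\in\comp_k(s)$ with finite preimage; taking $\Lambda$ of the form in Example~\ref{ex:detform} with generic entries in the blocks $B$ and $C$, one recovers $V$ as the common kernel of an appropriate slice of $\Lambda$ and $W$ as the image of $\Lambda$ on a complement of $V$.

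For the ``only if'' direction, suppose $k\le\kappa(s)-(m-s)$ with $s\ne r-1$. Pick $V_0\subseteq\KK^n$ of dimension $s+2+n-r$ and $W\subseteq\KK^m$ of dimension $s$; the compression space of matrices sending $V_0$ into $W$ has dimension $\kappa(s)+1-(m-s)\ge k+1$, so it contains some $(k+1)$-plane $\Lambda$. This $\Lambda$ lies in the compression space of every $(V,W)$ with $V\subseteq V_0$ of dimension $s+1+n-r$, so $\rho_k(s)^{-1}(\Lambda)$ contains a $\PP^{s+1+n-r}$, destroying injectivity and hence the closed-embedding property. The symmetric case $k\le\kappa(s)-(n-r+s+1)$ with $s\ne 0$ is handled analogously, dropping $\dim W$ by one instead of increasing $\dim V$.

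For the ``if'' direction, assume both bounds hold. Since $\rho_k(s)$ is proper, it suffices to show it is injective on closed points and unramified: together these imply $\rho_k(s)$ is a closed immersion. Point-injectivity is a dimension count: if $\Lambda$ lies in two compression spaces for distinct pairs $(V_0,W_0)$ and $(V_1,W_1)$, the intersection of these spaces has dimension $mn-2(s+1+n-r)(m-s)+p(m-2s+q)$ where $p=\dim(V_0\cap V_1)$ and $q=\dim(W_0\cap W_1)$, and case analysis (pure $V$-difference, pure $W$-difference, and mixed) shows this is strictly less than $k+1$ under the bounds, with the mixed case being the tightest. Unramifiedness is the infinitesimal analogue: a nonzero $(\dot V,\dot W)\in T_{(V,W)}\comp(s)$ in $\ker d\rho_k(s)$ satisfies the compatibility equations $A\,\dot V(v)\equiv\dot W(Av)\pmod W$ for all $A\in\Lambda$, $v\in V$, encoding that $\Lambda$ still sits in the infinitesimally perturbed compression space; the same dimensional obstruction rules this out. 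The main technical obstacle I anticipate is the mixed case of the intersection-dimension computation together with its infinitesimal analogue, both requiring careful bookkeeping of how the linear-algebraic constraints combine; the boundary cases $s=0$ and $s=r-1$, where only one of the two bounds is nonvacuous, also require a separate check.
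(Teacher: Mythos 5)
Your argument for the closed-embedding characterization and for generic finiteness is close in spirit to the paper's proof: your point-injectivity count is essentially the paper's Lemma~\ref{lemma:intersect} (though note that the maximum of your expression $mn-2(s+1+n-r)(m-s)+p(m-2s+q)$ over distinct pairs is attained in the \emph{pure} cases $V_0=V_1$ or $W_0=W_1$, not the mixed case --- it is precisely these pure cases that produce the thresholds $\kappa(s)-(m-s)$ and $\kappa(s)-(s+1+n-r)$ in the statement), your unramifiedness step corresponds to the paper's explicit description of $d\rho_k(s)$, and your observation that a $(k+1)$-plane sending a $(s+2+n-r)$-dimensional $V_0$ into $W$ lies over a whole $\PP^{s+1+n-r}$ of points of $\comp(s)$ is a correct proof that the bounds are necessary for $\rho_k(s)$ to be a closed embedding. (Your opening dimension count for generic finiteness is circular --- Theorem~\ref{thm:ts}(i) only bounds $\dim\comp_k(s)$ from above --- but your fallback of exhibiting a single finite fiber is fine once the recovery of $(V,W)$ from a suitable $\Lambda$ is actually verified; the paper instead checks that $d\rho_k(s)$ is injective at a generic point of a fiber.)

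The genuine gap is in the smoothness assertion. You deduce ``image smooth $\iff$ closed embedding'' from Zariski's Main Theorem, but ZMT says a \emph{finite} birational morphism onto a normal variety is an isomorphism, and exactly in the case you need --- when the bounds fail --- your own construction shows $\rho_k(s)$ has positive-dimensional fibers, so it is not finite and ZMT gives nothing. A proper birational morphism with a positive-dimensional fiber can have smooth image (a blow-down $\mathrm{Bl}_p\PP^2\to\PP^2$ is the standard example), so ``bounds fail $\Rightarrow$ $\comp_k(s)$ singular'' does not follow from non-injectivity; moreover your route would also need $\rho_k(s)$ to have degree one, which is more than the generic finiteness you establish. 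The paper closes this direction by a direct tangent-space computation: at a point of $\comp_k(s)$ represented by a matrix as in \eqref{eq:matrix} with a zero column of $B$ (these are exactly your non-injectivity points), the images of $d\rho_k(s)$ coming from the several fibers over that point together span a space of dimension strictly larger than $\dim\Gr(k+1,\U(s))=\dim\comp_k(s)$, so the image is singular there; a zero row of $C$ handles the other bound. You need to supply such a computation (or some other argument that the image is singular at these points) to obtain the ``only if'' half of the smoothness statement.
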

Before proving the theorem, we will need the following lemma:

\begin{lemma}\label{lemma:intersect}
Consider compression spaces $P_1$ and $P_2\in\comp(s)$ with $P_1\neq P_2$. Write $P_1\cap P_2$ for their intersection as $\kappa(s)$-planes in $\PP^{mn-1}$. Then
\begin{align*}
    \dim (P_1\cap P_2)&\leq \kappa(s)-\min\{m-s,~s+1+n-r\}.\\
\intertext{Furthermore, if $s=0$ or $s=r-1$ then we can say which term in the minimum to take: we have}
	\dim (P_1\cap P_2)&\leq \kappa(s)-(m-s) \qquad &\textrm{if}\ s=0, \textrm{ and}\\
	\dim (P_1\cap P_2)&\leq \kappa(s)-(s+1+n-r) \qquad &\textrm{if}\ s=r-1.\\
\end{align*}
\end{lemma}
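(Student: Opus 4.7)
\textbf{Proof plan for Lemma~\ref{lemma:intersect}.}

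The plan is to parameterize each compression space by its defining pair $(V_i,W_i)$ with $\dim V_i = v := s+1+n-r$ and $\dim W_i = s$, choose a basis of $\KK^n$ adapted to the flag $V_1\cap V_2 \subset V_1,V_2 \subset V_1+V_2\subset \KK^n$ and a basis of $\KK^m$ adapted to $W_1\cap W_2 \subset W_1,W_2\subset W_1+W_2\subset \KK^m$, and then simply count the dimension of $P_1\cap P_2$ as a subspace of $\Hom(\KK^n,\KK^m)$. Setting $a := \dim(V_1\cap V_2)$ and $\alpha := \dim(W_1\cap W_2)$, a linear map $f\in P_1\cap P_2$ is determined by sending each basis vector of $\KK^n$ into a prescribed subspace of $\KK^m$: the $a$ basis vectors of $V_1\cap V_2$ go into $W_1\cap W_2$, the $v-a$ vectors in $V_1\setminus V_2$ into $W_1$ and the $v-a$ vectors in $V_2\setminus V_1$ into $W_2$, while the remaining $n-2v+a$ vectors are free. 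Adding up gives an explicit formula for $\dim(P_1\cap P_2)$ as a vector subspace, and comparing to $\kappa(s)+1=mn-(m-s)v$ I expect to obtain
$$\codim_{P_1}(P_1\cap P_2)=(m-s)(v-a)+a(s-\alpha).$$

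With this identity in hand, the lemma is a short case analysis. Since $P_1\neq P_2$, we have either $a<v$ or $\alpha<s$. If $W_1=W_2$ (forced when $s=0$), then $\alpha=s$ and the codimension is $(m-s)(v-a)\ge m-s$; dually, if $V_1=V_2$ (forced when $s=r-1$, since then $v=n$), the codimension is $v(s-\alpha)\ge v$. In the generic case when both $a<v$ and $\alpha<s$, the codimension is a sum of two positive contributions, and a direct estimate of $x(v-a)+ay$ with $x=m-s\ge 1$ and $y=s-\alpha\ge 1$ shows it is at least $\min\{m-s,v\}$ regardless of $a\in\{0,\ldots,v-1\}$. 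This yields the main inequality and simultaneously the sharper inequalities in the endpoint cases $s=0$ and $s=r-1$.

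I do not expect a significant obstacle: the argument is essentially bookkeeping once the adapted bases are fixed. The only mild subtlety is being careful with the degenerate cases ($s=0$ forces $W_i=0$ and $s=r-1$ forces $V_i=\KK^n$, so the corresponding index is automatically pinned), which is precisely what produces the stronger one-sided bounds in those cases.
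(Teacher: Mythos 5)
Your proposal is correct, and your codimension formula checks out: with $v=s+1+n-r$, $a=\dim(V_1\cap V_2)$, $\alpha=\dim(W_1\cap W_2)$, the adapted-basis count gives $\codim_{P_1}(P_1\cap P_2)=(m-s)(v-a)+a(s-\alpha)=v(m-s)-a(m-2s+\alpha)$, which agrees with the condition count implicit in the paper, and your three-case analysis ($\alpha=s$; $a=v$; both strict) then yields the main bound and the endpoint refinements. The underlying strategy is the same as the paper's — parameterize each $P_i$ by the pair $(V_i,W_i)$ and count linear conditions — but the execution differs: the paper argues that $\dim(P_1\cap P_2)$ is maximized when the intersections $V_1\cap V_2$ and $W_1\cap W_2$ are as large as possible subject to $P_1\neq P_2$, and then evaluates only the two extremal configurations, whereas you derive a closed formula valid for all $(a,\alpha)$ and read the bounds off directly. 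Your route buys two things: the independence of the linear conditions is manifest from the adapted basis (the paper's "counting linear constraints" is a bit more informal), and the special cases $s=0$ and $s=r-1$ follow immediately since those values pin $\alpha=s$ respectively $a=v$; it also sidesteps a harmless slip in the paper, where the two displayed dimension counts are attached to the wrong extremal cases (the configuration $V_1=V_2$, $\dim(W_1\cap W_2)=s-1$ gives $\kappa(s)-(s+1+n-r)$, and the configuration $W_1=W_2$, $\dim(V_1\cap V_2)=v-1$ gives $\kappa(s)-(m-s)$), though the final statement of the lemma is unaffected. The only point to make explicit when writing this up is the trivial observation that $P_1\neq P_2$ forces $(V_1,W_1)\neq(V_2,W_2)$, so indeed $a<v$ or $\alpha<s$, and that $a=0$ is allowed in your mixed case (the first term alone already gives at least $m-s$ there).
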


\begin{proof}
Write $a = s+1+n-r$ for convenience.  For $i=1,2$, $P_i$ can be characterized as the space of matrices that sends an $a$-dimensional subspace $V_i \subseteq \KK^n$ into an $s$-dimensional subspace $W_i \subseteq \KK^m$.  Then $P_1\cap P_2$ is the space of matrices that 
\begin{enumerate}[(i)]
  \item send $V_1$ to $W_1$;\label{cond:(i)} 
  \item send $V_2$ to $W_2;$.\label{cond:(ii)}
\end{enumerate}
As a consequence of  (\ref{cond:(i)}) and (\ref{cond:(ii)}) we also have that elements of $P_1\cap P_2$
\begin{enumerate}[(i)]
\setcounter{enumi}{2}
  \item send $V_1\cap V_2$ to $W_1\cap W_2$.\label{cond:(iii)}  
\end{enumerate}
We will show that $\dim (P_1\cap P_2)$ is maximized when $V_1 \cap V_2$ and $W_1\cap W_2$ are as large as possible.

Let $b = \dim V_1\cap V_2$.  Then (\ref{cond:(iii)}) gives $b\cdot \codim (W_1\cap W_2)$ linear conditions on the matrices in $P_i$, and (\ref{cond:(i)}) and (\ref{cond:(ii)}) each give $(a-b)(m-s)$ additional linear conditions.  So because of (\ref{cond:(iii)}), if we fix $V_1$ and $V_2,$ then choosing $W_1$ and $W_2$ to have as large intersection as possible gives $\dim (P_1\cap P_2)$ as large as possible.  Similarly, suppose we fix $W_1$ and $W_2$ and increase the dimension of $V_1\cap V_2$ by one.  This adds only $\codim (W_1\cap W_2) \le 2(m-s)$ conditions in (\ref{cond:(iii)}), but removes $2(m-s)$ conditions arising from (\ref{cond:(i)}) and (\ref{cond:(ii)}).  Thus, $\dim(P_1\cap P_2)$ attains its maximum when $V_1\cap V_2$ and $W_1\cap W_2$ are as large as possible.

The only constraint, of course, is that $P_1\ne P_2$, i.e.~we don't have both $V_1=V_2$ and $W_1=W_2$.  Therefore, $\dim(P_1\cap P_2)$ is largest if either $V_1=V_2$ and $\dim (W_1\cap W_2) = s-1$, or if $\dim (V_1\cap V_2) = a-1$ and $W_1=W_2$.  In the first case, by counting linear constraints on the points of $\PP^{mn-1}$, we have $$\dim(P_1\cap P_2) = (mn - 1) - 2(m-s) - (a-1)(m-s)= \kappa(s)-(m-s)$$
and in the second case we have
$$\dim(P_1\cap P_2) = (mn - 1) - a(m-s+1) = \kappa(s)-(s+1+n-r).$$
We conclude that for any $P_1\ne P_2$, $\dim (P_1\cap P_2)\leq \kappa(s)-\min\{m-s,~s+1+n-r\}.$

Furthermore, if $s=0$ then $\dim (W_1\cap W_2) = s-1$ is not possible, and similarly if $s=r-1$ then $\dim(V_1\cap V_2) = a-1 = n-1$ is not possible (since if $s=r-1$ then $V_1$ and $V_2$ are both $n$-dimensional, so necessarily $\dim(V_1\cap V_2)=n$ as well.)  This accounts for the stronger bounds in the second part of the Lemma in these special cases.

\end{proof}

\begin{proof}[Proof of Theorem \ref{thm:rho}]
We first show that if the lower bounds on $k$ hold, the map $\rho_k(s)$ is injective. 
Indeed, consider any two distinct points $x,y\in\Gr(k+1,\U(s))$ whose image under $\rho_k(s)$ is the same. For any value of $k$, it is clear that these points must project to distinct points $P,Q\in\comp(s)$. Furthermore, the fact that the images of $x$ and $y$ are equal implies that $P$ and $Q$ contain a common $(k+1)$-dimensional subspace. 
If we assume that the lower bounds on $k$ hold, then this is impossible by Lemma \ref{lemma:intersect}. Hence, if the lower bounds on $k$ hold, $\rho_k(s)$ is injective.

We now consider the differential $d\rho_k(s)$ of $\rho_k(s)$. We will show that this is injective everywhere if the bounds on $k$ hold.
Hence, since $\rho_k(s)$ is injective with injective differential, it is a closed embedding.
In particular, if the bounds on $k$ hold, $\comp_k(s)$ is smooth since it is isomorphic to a Grassmann bundle over a product of Grassmannians.
Furthermore, we will show that for arbitrary $k$, $d\rho_k(s)$ is injective at certain points. Hence $\rho_k(s)$ must be generically finite.

To start, we describe the map $d\rho_k(s)$. Let $Q$ be the standard compression space compressing the first $s+n-r+1$ standard basis vectors into the subspace generated by the last $s$ standard basis vectors. See (\ref{fig:compression}).  In a neighborhood of the fiber over $Q$, $\Gr(k+1,\U(s))$ trivializes as $\A^{\delta(s)}\times 
\Gr(k+1,\kappa(s)+1)$, and the tangent space of any point $q$ in the fiber decomposes as a direct sum of $$T_Q\comp(s)\cong\KK^{\delta(s)}\cong \left(\KK^{s+1+n-r}\otimes\KK^{r-s-1}\right)\oplus\left(\KK^{m-s}\otimes \KK^s\right)$$ (tangent directions on the base) and $\KK^{(k+1)(\kappa(s)-k)}$ (tangent directions in the fiber). Recall the definition of $\delta(s)$ in Equation \eqref{eqn:delta}.

Let $p\in \comp_k(s)$ be the image of $q\in\Gr(k+1,\mathcal{U}(s))$. We may represent $p$ by a matrix $P$ as in Equation \eqref{eq:matrix}. The ideal $I$ of this point in $\dmn$ is thus given by $x_{i,j}$ for $i\leq m-s$ and $j\leq s+1+n-r$, along with $\kappa(s)-k$ additional independent linear forms $f_i$ as in Theorem \ref{thm:ts}.
The tangent space $T_p\comp_k(s)$ is contained in the tangent space $T_p\bF_k(\dmn)$. As in the proof of Lemma~\ref{lemma:abc}, tangent vectors to $\bF_k(\dmn)$ at $p$ may be described by mapping the $x_{i,j}$ and $f_i$ to linear forms modulo $J$. There are no restrictions on the choice of the images of $f_i$, but the images of the $x_{i,j}$ may be constrained; denote these images by $y_{i,j}$.

Now, the summand $\KK^{(k+1)(\kappa(s)-k)}\subset T_q \Gr(k+1,\U(s))$ maps isomorphically to the subspace of $T_p\comp_k(s)$ with all $y_{i,j}=0$. Indeed, deforming $q$ within the fiber over $Q$ amounts to perturbing the $f_i$.
On the other hand, a tangent vector of the form 
$$(t\otimes u,v\otimes w)\in \left(\KK^{s+1+n-r}\otimes\KK^{r-s-1}\right)\oplus\left(\KK^{m-s}\otimes \KK^s\right)$$
maps to a tangent vector with the image $y_{i,j}$ of $x_{i,j}$ given by 
$$
y_{i,j}=(Bu)_it_j+v_i(wC)_j.
$$
Here, we have chosen a basis for $T_Q \comp(s)$ exactly as used in the proof of Theorem~\ref{thm:ts}(i): we may deform $Q$ within $\comp(s)$ by performing $(m-s)m$ independent row and $(s+1+n-r)(r-s-1)$  independent column operations.
It follows from our description of the differential $d\rho_k(s)$ that $d\rho_k(s)$ is injective at $q$ if the rows of $B$, as well as the columns of $C$, are linearly independent. This latter condition is clearly the case if either the requisite bounds on $k$ hold or if $q$ is a generic point in the fiber over $Q$. Furthermore, if the bounds on $k$ hold, then applying the $(\GL_m\times \GL_n)$-action on $\Gr(k+1,\mathcal{U}(s))$ induced by the natural action on $\comp(s)$ shows that the differential is injective everywhere.
We conclude that $\rho_k(s)$ is generically finite, and is furthermore a closed embedding if the bounds on $k$ hold.

Finally, if the bounds on $k$ fail to hold, $\comp_k(s)$ must be singular. Indeed,
we will exhibit points $P$ in $\comp_k(s)$ with tangent space dimension equal to 
\begin{equation}\label{eqn:tsdim}
\dim T_P \comp_k(s)=(k+1)(\kappa(s)-k)+(k+1)(m-s)(s+n-r+1).
\end{equation}
This exceeds the dimension of $\Gr(k+1,\U(s))$ which equals the dimension of $\comp_k(s)$. Thus, $\comp_k(s)$ is singular at these points.

Suppose that $k\leq \kappa(s)-(m-s)$ and $s\neq r-1$. Consider any point $p\in \comp_k(s)$ represented by a matrix $P$ as in Equation \eqref{eq:matrix} with the first column of $B$ identically $0$. 
We may describe tangent vectors of $\comp_k(s)$ at $p$ as above.
Now, there is a point $q\in\Gr(k+1,\U(s))$ mapping to $p$ which lies in the fiber over the standard compression space in $\comp(s)$ which compresses the subspace generated by the  standard basis vectors in positions $2,\ldots,s+n-r+2$ into the subspace generated by the last $s$ standard basis vectors.
Inspecting the differential of $\rho_k(s)$ at $q$ shows that its image contains all tangent vectors with $y_{i,j}=0$ for $j\neq 1$, that is, the $y_{i,1}$ may be arbitrary. Considering preimages of $p$ in other fibers over $\comp_k(s)$ show that all $y_{i,j}$ may be arbitrary, and hence, the tangent space dimension at $p$ is as in Equation \eqref{eqn:tsdim}.

If instead $k\leq \kappa(s)-(n-r+s+1)$ and $s\neq 0$, a similar argument involving zeroing out the first row of $C$ completes the proof.
\end{proof}

\begin{cor}\label{cor:comp}
  For each $k \le \kappa(s)$, the variety $\comp_k(s)$ is an irreducible component of $\bF_k(\dmn)$, of dimension $\delta(s)+(k+1)(\kappa(s)-k)$. If conditions \eqref{eq:smootha} and \eqref{eq:smoothb} in Theorem \ref{thm:ts} are satisfied, then every point of $\comp_k(s)$ is a smooth point of $\bF_k(\dmn)$.
\end{cor}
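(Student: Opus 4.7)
The plan is to deduce this corollary directly from Theorems~\ref{thm:rho} and~\ref{thm:ts}. First, the base $\comp(s)\cong \Gr(s+n-r+1,n)\times \Gr(s,m)$ is irreducible of dimension $\delta(s)$, and the Grassmann bundle $\Gr(k+1,\U(s))\to \comp(s)$ has fibers of dimension $(k+1)(\kappa(s)-k)$. Hence $\Gr(k+1,\U(s))$ is irreducible of dimension $\delta(s)+(k+1)(\kappa(s)-k)$. Theorem~\ref{thm:rho} says $\rho_k(s)$ is generically finite, so its image $\comp_k(s)$ is irreducible of this same dimension.

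Next, I would show $\comp_k(s)$ is in fact an irreducible component of $\fd$. Let $\eta$ be a general point of $\comp_k(s)$. Theorem~\ref{thm:ts}(i) gives
\[
\dim T_\eta \bF_k(\dmn)\;\le\; \delta(s)+(k+1)(\kappa(s)-k)\;=\;\dim \comp_k(s).
\]
Since $\comp_k(s)\subseteq \fd$ is an irreducible closed subvariety through $\eta$, we have $\dim \comp_k(s)\le \dim_\eta \fd \le \dim T_\eta \fd$. Combining the two chains forces equality throughout, and in particular $\dim_\eta \fd=\dim \comp_k(s)$, so $\comp_k(s)$ exhausts a full-dimensional component of $\fd$ at $\eta$.

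For the smoothness statement, I would first note that conditions \eqref{eq:smootha} and \eqref{eq:smoothb} are strictly stronger than the bounds appearing in Theorem~\ref{thm:rho}, so under these hypotheses $\rho_k(s)$ is a closed embedding and $\comp_k(s)$ is itself smooth of dimension $\delta(s)+(k+1)(\kappa(s)-k)$. By Theorem~\ref{thm:ts}(ii), the tangent-space bound now holds at \emph{every} point $\eta\in \comp_k(s)$, and the dimension-chasing argument from the previous paragraph applied pointwise gives $\dim T_\eta \fd = \dim \comp_k(s) = \dim_\eta \fd$, so $\eta$ is a smooth point of $\fd$. There is no serious obstacle here since the substantive work has been carried out in Theorems~\ref{thm:rho} and~\ref{thm:ts}; the only item requiring care is matching up the two families of inequalities to confirm that the hypotheses of Theorem~\ref{thm:ts}(ii) imply those of Theorem~\ref{thm:rho}.
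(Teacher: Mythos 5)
Your proposal is correct and follows essentially the same route as the paper: generic finiteness of $\rho_k(s)$ (Theorem~\ref{thm:rho}) gives $\dim \comp_k(s)=\delta(s)+(k+1)(\kappa(s)-k)$, Theorem~\ref{thm:ts}(i) gives the matching tangent-space bound at a general point to conclude $\comp_k(s)$ is a component, and Theorem~\ref{thm:ts}(ii) gives the bound at every point for smoothness. The detour through the closed-embedding criterion of Theorem~\ref{thm:rho} to show $\comp_k(s)$ itself is smooth is harmless but unnecessary, since the pointwise equality $\dim T_\eta \fd=\dim_\eta \fd$ already yields smoothness of $\fd$ along $\comp_k(s)$.
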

\begin{proof}
  Since the map $\rho_k(s)$ is generically finite by Theorem \ref{thm:rho}, the variety $\comp_k(s)$ has the same dimension as the Grassmann bundle $\Gr(k+1,\U(s))$ over $\comp(s)$. The dimension of $\comp(s)$ is $\delta(s)$ (see Equation \eqref{eqn:delta}), and the rank of $\U(s)$ is $\kappa(s)+1$, hence the dimension of $\comp_k(s)$ is $\delta(s)+(k+1)(\kappa(s)-k)$. But by Theorem \ref{thm:ts}(i), this is an upper bound on the tangent space dimension of $\bF_k(\dmn)$ at a general point of $\comp_k(s)$. Hence, $\comp_k(s)$ must be an irreducible component of $\bF_k(\dmn)$.

  Furthermore, if the bounds \eqref{eq:smootha} and \eqref{eq:smoothb} on $k$ are satisfied, it follows from Theorem \ref{thm:ts}(ii) that the tangent space dimension of any point of $\bF_k(\dmn)$ equals the dimension of $\comp_k(s)$.
\end{proof}

By the same token, for numbers $k$ and $s$ satisfying the conditions in Theorem~\ref{thm:tsperm}, the subscheme $\comp_k(\sigma,\tau)$ is actually a component of $\fp$.  

\begin{prop}\label{prop:perm}
Fix integers $k$ and $s$ with $0\leq s\leq r-1$, $k\leq \kappa(s)$.
  Let $\sigma$ and $\tau$ be subsets of $\{1,\ldots,n\}$ and $\{1,\ldots,m\}$ of sizes $ s+1+n-r$ and $s$ respectively.  
    Suppose that
\begin{align*}
s=0,~r-1\qquad &\textrm{ and }\qquad 2\leq k \leq \kappa(s),\textrm{ or}\\
1\leq s\leq r-2\qquad &\textrm{ and }\qquad 5\leq k \leq \kappa(s),
\intertext{and that}
s+1+n-r \ge 3 \quad\text{  if }\quad s\ne 0\qquad &\text{ and }\qquad m-s \ge 3\quad\text{ if }\quad s\ne r-1.
\end{align*}
Then $\comp_k(\sigma,\tau)$ is an irreducible component of $\fp$.
\end{prop}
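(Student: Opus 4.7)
The plan is to mirror the argument used in the determinantal case in Corollary \ref{cor:comp}, with Theorem \ref{thm:tsperm}(i) playing the role of Theorem \ref{thm:ts}(i). The strategy has two ingredients: first, realize $\comp_k(\sigma,\tau)$ as a closed irreducible subscheme of $\fp$ whose dimension can be read off directly, and second, invoke Theorem \ref{thm:tsperm}(i) to bound the tangent space of $\fp$ at a general point of $\comp_k(\sigma,\tau)$ by exactly this same dimension. Together these will force $\comp_k(\sigma,\tau)$ to be an entire component.

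Concretely, I would first recall from Definition \ref{defn:comps} and the discussion following it that $\comp_k(\sigma,\tau)$ is the image of the closed embedding
\begin{equation*}
\Gr(k+1,\comp(\sigma,\tau)) \;\cong\; \Gr(k+1,\kappa(s)+1)\;\hookrightarrow\;\Gr(k+1,mn),
\end{equation*}
and that this image lies in $\fp$. Consequently $\comp_k(\sigma,\tau)$ is a closed irreducible subscheme of $\fp$ of dimension $(k+1)(\kappa(s)-k)$, where $s=|\tau|$. Next, I would note that the hypotheses imposed in Proposition \ref{prop:perm} on $k$, $s$, $m$, $n$, $r$ are exactly those required to invoke Theorem \ref{thm:tsperm}(i), so that theorem gives
\begin{equation*}
\dim T_\eta \fp \;\leq\; (k+1)(\kappa(s)-k)
\end{equation*}
for a general point $\eta\in\comp_k(\sigma,\tau)$.

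To conclude, let $Z$ be any irreducible component of $\fp$ containing $\comp_k(\sigma,\tau)$; such a $Z$ exists because $\fp$ is Noetherian. On the one hand, $\dim Z \geq \dim \comp_k(\sigma,\tau) = (k+1)(\kappa(s)-k)$. On the other hand, for a general $\eta\in\comp_k(\sigma,\tau)\subset Z$ we have $\dim Z \leq \dim T_\eta Z \leq \dim T_\eta \fp \leq (k+1)(\kappa(s)-k)$. These inequalities force equality of dimensions, and since both $Z$ and $\comp_k(\sigma,\tau)$ are irreducible with $Z\supseteq\comp_k(\sigma,\tau)$, we must have $Z=\comp_k(\sigma,\tau)$.

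All the genuine difficulty is bundled into Theorem \ref{thm:tsperm}(i); the present proof is, by design, a routine dimension count once that theorem is in hand. The only subtle point to verify is that the numerical hypotheses of Proposition \ref{prop:perm} are literally the ones needed for Theorem \ref{thm:tsperm}, and that the dimension of $\comp_k(\sigma,\tau)$ (as a Grassmannian) exactly matches the bound produced there — so there is no slack. No separate treatment of the standard versus non-standard $(\sigma,\tau)$ is necessary, since Theorem \ref{thm:tsperm} is stated for arbitrary standard compression spaces $\comp(\sigma,\tau)$ with $|\tau|=s$.
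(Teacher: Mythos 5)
Your proposal is correct and follows essentially the same route as the paper: identify $\comp_k(\sigma,\tau)\cong\Gr(k+1,\kappa(s)+1)$ as a closed irreducible subscheme of $\fp$ of dimension $(k+1)(\kappa(s)-k)$, and note that the hypotheses are exactly those of Theorem~\ref{thm:tsperm}(i), whose tangent-space bound at a general point then forces it to be a whole component. The only difference is that you spell out the final dimension comparison with an ambient component $Z$, which the paper leaves implicit.
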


\begin{proof}
By definition (see the discussion in Section~\ref{sec:torus}), $\comp_k(\sigma,\tau)$ is isomorphic to $\Gr(k+1,\kappa(s)+1)$ and thus has dimension $(k+1)(\kappa(s)-k)$.
Now, if the assumptions hold,  we can apply the first part of Theorem \ref{thm:tsperm}
to conclude that the tangent space dimension of $\bF_k(\pmn)$ at a general point of $\comp_k(\sigma,\tau)$ is at most $(k+1)(\kappa(s)-k)$. But this is the dimension of $\comp_k(\sigma,\tau)$, hence it must be an irreducible component of $\bF_k(\pmn)$.
 \end{proof}

The next result ensures that the compression components $\comp_k(s)$ (respectively $\comp_k(\sigma,\tau)$) of $\fd$ and $\fp$ that we have now identified are distinct.

\begin{prop}\label{prop:distinct}  Let $k\ge 1$.
\begin{enumerate}[(i)]
  \item  We have $\comp_k(s)=\comp_k(s')$ if and only if $s=s'$. \label{part:(i)}
  \item For any \label{part:(ii)}
   $\sigma,\sigma'\subset \{1,\ldots,n\}$ and $\tau,\tau'\subset\{1,\ldots,m\}$ with $$|\sigma|-|\tau|=|\sigma'|-|\tau'|=n-r+1,$$ 
   we have
   $\comp_k(\sigma,\tau)=\comp_k(\sigma',\tau')$ if and only if  $\sigma=\sigma'$ and $\tau=\tau'$.
\end{enumerate}

\end{prop}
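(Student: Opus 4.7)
The plan is to handle (ii) by a direct argument about coordinate subspaces, and to handle (i) by combining a key lemma about inclusions of compression spaces with a recovery of the compression data from a generic $\Lambda$.

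For (ii), I note that $\comp(\sigma, \tau)$ is the coordinate linear subspace of $\KK^{mn}$ obtained by setting to zero all entries in positions $(i,j)$ with $i \notin \tau$ and $j \in \sigma$. Under our hypotheses $|\sigma| \geq 1$ and $|\tau^c| \geq 1$, this zero pattern uniquely determines $(\sigma, \tau)$, so distinct data yield distinct linear subspaces. An equality $\comp_k(\sigma, \tau) = \comp_k(\sigma', \tau')$ as subsets of $\Gr(k+1, mn)$ forces every $(k+1)$-dimensional subspace of $\comp(\sigma, \tau)$ to lie in $\comp(\sigma', \tau')$; for $k \geq 1$ this yields $\comp(\sigma, \tau) \subseteq \comp(\sigma', \tau')$, and symmetry gives equality, so $(\sigma, \tau) = (\sigma', \tau')$.

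For (i), the core technical input is the lemma: \emph{if $P_{V, W} \subseteq P_{V', W'}$ as linear subspaces of $\KK^{mn}$, then $V' \subseteq V$ and $W \subseteq W'$; taking dimensions then forces $s = s'$ and $(V, W) = (V', W')$.} To prove it, I test the inclusion on rank-one matrices $\varphi \otimes w$: such a matrix lies in $P_{V, W}$ iff $\varphi(V) = 0$ or $w \in W$. Choosing $\varphi \in V^{\perp}$ and allowing $w$ to range outside the proper subspace $W'$ forces $\varphi(V') = 0$, so $V^{\perp} \subseteq (V')^{\perp}$ and hence $V' \subseteq V$; a symmetric test with $w \in W$ and $\varphi$ arbitrary gives $W \subseteq W'$. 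The lemma immediately handles the case $k = \kappa(s)$: $[\Lambda] = [P_{V, W}] \in \comp_{\kappa(s)}(s)$ cannot lie in $\comp_{\kappa(s)}(s')$ for $s' \neq s$.

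For general $k \geq 1$, the plan is to exhibit a generic $\Lambda \in \comp_k(s)$ not contained in any $s'$-compression space. Given $\Lambda$ inside a fixed $P_{V, W}$, define $V_{\Lambda} := \mathrm{span}\{\ker M : M \in \Lambda,\ \rank M = r-1\}$ and $W_{\Lambda} := \bigcap\{\operatorname{im}(M) : M \in \Lambda,\ \rank(M|_V) = s\}$. Any $s'$-compression space $P_{V', W'}$ containing $\Lambda$ automatically satisfies $V_{\Lambda} \subseteq V'$ and $W' \subseteq W_{\Lambda}$. A standard general position argument shows that for generic $\Lambda$, we have $V_{\Lambda} = V$ whenever $k(n-r+1) \geq s$, and $W_{\Lambda} = W$ whenever $(k+1)(m-r+1) \geq m - s$; combining, $V \subseteq V'$ and $W' \subseteq W$, forcing $s = s'$ by dimension.

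The main obstacle is the small-$k$ regime where neither recovery bound applies directly. For those cases I will bound $\dim(\comp_k(s) \cap \comp_k(s'))$ via the incidence variety $Z = \{(\Lambda, (V', W')) : (V', W') \in \comp(s'),\ \Lambda \subseteq P_{V, W} \cap P_{V', W'}\}$, stratifying $\comp(s')$ by $\dim(P_{V, W} \cap P_{V', W'})$. The key lemma guarantees the strict inequality $\dim(P_{V, W} \cap P_{V', W'}) < \dim P_{V, W}$ whenever $(V', W') \neq (V, W)$; a careful Schubert codimension count on the strata attaining each maximal intersection dimension should show that the projection of $Z$ to $\Gr(k+1, P_{V, W})$ has dimension strictly less than $(k+1)(\kappa(s)-k) = \dim \Gr(k+1, P_{V, W})$, so the generic $\Lambda$ lies in no such intersection, completing the proof.
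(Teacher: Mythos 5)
Your part (ii) is correct, and in fact a bit slicker than the paper's argument (the paper exhibits a $k$-plane in $\comp(\sigma,\tau)$ with nonzero entries in every non-forced position; you instead deduce $\comp(\sigma,\tau)=\comp(\sigma',\tau')$ directly from the equality of Fano subschemes and read off $(\sigma,\tau)$ from the zero pattern — this works since $\sigma\neq\emptyset$ and $\tau^c\neq\emptyset$ follow from $r\leq m\leq n$). Your key lemma on inclusions $P_{V,W}\subseteq P_{V',W'}$, proved with rank-one matrices $\varphi\otimes w$, is also correct, and it does settle the case $k=\kappa(s)$.

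For part (i) in general, however, there are two problems. First, a repairable one: your definition $W_\Lambda=\bigcap\{\operatorname{im}(M): M\in\Lambda,\ \rank(M|_V)=s\}$ does not support the claim $W'\subseteq W_\Lambda$; a matrix can have $\rank(M|_V)=s$, lie in $P_{V',W'}$, and have image not containing $W'$ (e.g.\ if $M(V')=0$). The correct condition is $\rank M=r-1$, for then $M(V')=W'$ by a dimension count, and dually $\ker M\subseteq V'$. Second, and this is the genuine gap: in the small-$k$ regime where your numerical conditions $k(n-r+1)\geq s$ and $(k+1)(m-r+1)\geq m-s$ fail — which includes $k=1$ with $r=m=n$ and $s$ in the middle range, precisely the case the paper needs for its theorem on $\bF_1(\dmn)$ — your argument is only a plan. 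The incidence-variety stratification and the inequality $\dim S_d<(k+1)(\kappa(s)+1-d)$ for every stratum $S_d\subseteq\comp(s')$ on which $\dim(P_{V,W}\cap P_{V',W'})\geq d$ is asserted with ``should show'' and never carried out; strict properness of each intersection (your key lemma) is far from enough, so as written the proposal does not prove (i) for all $k\geq 1$.

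Note also that the fallback is heavier than necessary, and this points to the fix (which is the paper's route). For distinctness you may assume $s'<s$, and then only the kernel-side recovery is needed: if the kernels of the rank-$(r-1)$ members of $\Lambda$ span $V$, then any $P_{V',W'}\supseteq\Lambda$ has $V'\supseteq V$, forcing $s'\geq s$. Moreover you are free to use the kernels of \emph{all} rank-$(r-1)$ members of $\Lambda$, not just $k+1$ of them, so even a single well-chosen pencil suffices and your bound $k(n-r+1)\geq s$ is an artifact of undercounting. The paper exhibits exactly such a pencil: the line with $B=B(z_0,z_1)$, $C=C(z_0,z_1)$, $D=0$, whose kernels contain a rational normal curve sweeping out an $(s+1)$-dimensional space together with $n-r$ fixed vectors, hence span $V$; any $k$-plane in $P_{V,W}$ containing this line then lies in no $s'$-compression space with $s'<s$. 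Replacing your generic-$\Lambda$-plus-dimension-count plan by this single explicit witness closes the gap.
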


\begin{proof}
  Part (\ref{part:(ii)}) is more or less immediate, since we can certainly find a $k$-plane in the standard compression space $\comp(\sigma,\tau)$ with points having nonzero entries in all but the $(|\tau|+1+n-m)(m-|\tau|)$ required entries.  Such a $k$-plane clearly cannot lie inside some other standard compression space $\comp(\sigma',\tau')$, since the matrices in $\comp(\sigma',\tau')$ must have some new entries that are required to be zero.

  Let us prove (\ref{part:(i)}).  For every pair of numbers $s'$ and $s$ with $0\le s'<s < r$, we will show that there is a line in an $s$-compression space that doesn't lie in any $s'$-compression space.  That will show that $\comp_1(s')$ and $\comp_1(s)$ are distinct.  Furthermore, it would follow that any $k$-plane inside that $s$-compression space that contains that line doesn't lie in any $s'$-compression space, thus showing that $\comp_k(s)$ and $\comp_k(s')$ are distinct for all $k>1$ too.  

Indeed, for any $1\leq s \leq r-1$, consider the line in $\PP^{mn-1}$ of matrices $P(z_0,z_1)$ of the form given in Equation \eqref{eq:matrix}, setting $D=0$ and setting $B = B(z_0,z_1)$ and $C = C(z_0,z_1)$.  These matrices are defined in~\eqref{eq:bc}.

We now show that this point of $\comp_1(s)$ does not lie in $\comp_1(s')$ for any $s'<s$.  In other words, we wish to show that for all subspaces $A$ of $\KK^n$ of dimension $s'+1+n-r$, there is a choice of $z_0$ and $z_1$ such that $A$ meets the kernel of the matrix $P(z_0,z_1)$ in dimension at most $n-r$.  

Indeed, the kernel of $P(z_0,z_1)$ is the $(n-r+1)$-dimensional space spanned by the rows of the matrix
$$
\left.\left(\begin{array}{c c  c c c}
  z_1^s & -z_0z_1^{s-1}&\ldots & \pm z_0^s&\\
  &&&& I_{n-r}
\end{array}
\right |
\ \mathbf{0}\ 
\right).
$$
So if $A$ does not already contain the span of the last $n-r$ vectors, we are done.  If it does, then since the rational normal curve of the first row sweeps out an $(s+1)$-dimensional space but $A$ is only $(s'+1+n-r)$-dimensional, a general choice of $z_0$ and $z_1$ will have the property that $A$ meets the kernel of $P(z_0,z_1)$ in dimension at most $n-r$, as desired.
\end{proof}

\begin{prop}\label{prop:irredperm}
If $\fp$ is nonempty, then it is is reducible.
\end{prop}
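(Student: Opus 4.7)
The strategy is to exhibit at least two distinct irreducible components of $\fp$. Since $\fp$ is nonempty, Proposition~\ref{prop:nonempty} gives $k \le \kappa(r-1) = (r-1)n-1$; in particular, for every $(r-1)$-subset $\tau \subset \{1,\ldots,m\}$, the standard compression subscheme $\comp_k(\{1,\ldots,n\},\tau)$ is a subscheme of $\fp$. Since $m \ge r \ge 2$, there are $\binom{m}{r-1} \ge r \ge 2$ such $\tau$, and the corresponding subschemes are pairwise distinct by Proposition~\ref{prop:distinct}(ii).

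The bulk of cases is handled by Proposition~\ref{prop:perm}: when $k \ge 2$ and $n \ge 3$, its hypotheses are satisfied for $s = r-1$, so each $\comp_k(\{1,\ldots,n\},\tau)$ is itself an irreducible component of $\fp$. The existence of at least $r \ge 2$ distinct such components directly implies reducibility.

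The remaining cases are $n = 2$ and $k = 1$ with $n \ge 3$. The case $n = 2$ forces $r = m = n = 2$ and $k = 1$, in which $P_{2,2}^2$ is the smooth quadric surface $x_{11}x_{22} + x_{12}x_{21} = 0$ in $\PP^3$, whose Fano variety of lines is classically the union of two disjoint $\PP^1$'s (the two rulings). This leaves $k = 1$ with $n \ge 3$ as the main obstacle, since Proposition~\ref{prop:perm} does not apply here: the tangent space of $\fp$ at a general point of $\comp_1(\{1,\ldots,n\},\tau)$ can strictly exceed the dimension of the compression subscheme, so these subschemes need not themselves be components.

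To handle this last subcase, I would compute directly, via Lemma~\ref{lemma:abc}, the tangent space of $\fp$ at carefully chosen torus fixed points lying inside $\comp_1(\{1,\ldots,n\},\tau)$ for distinct row subsets $\tau \ne \tau'$. Adapting the strategy of Section~\ref{sec:tangent} to the $k=1$ setting, one shows that every irreducible component of $\fp$ containing such a fixed point is ``anchored'' to the specific row-zero pattern prescribed by $\tau$---so that fixed points associated to distinct row subsets cannot lie on a common irreducible component. The delicate step is that the anchoring argument must be made without appealing to the dimension match exploited in Proposition~\ref{prop:perm}; instead one must track explicitly, via the vanishing of the anchored permanents in Lemma~\ref{lemma:abc}, that the first-order deformations of $\Lambda \in \comp_1(\{1,\ldots,n\},\tau)$ inside $\fp$ cannot produce matrices with new nonzero entries in rows outside $\tau$ that can be continued to any family extending to a $k$-plane in $\comp_1(\{1,\ldots,n\},\tau')$ for $\tau' \ne \tau$.
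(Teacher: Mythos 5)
Your handling of $k\ge 2$, $n\ge 3$ (two distinct components $\comp_k(\{1,\ldots,n\},\tau)$ via Propositions~\ref{prop:perm} and~\ref{prop:distinct}) and of $n=2$ (the two rulings of the quadric) coincides with the paper. The genuine gap is the case $k=1$, $n\ge 3$, which you correctly isolate but only sketch, and the sketch hinges on a claim that is false. You propose to show that first-order deformations of a line $\Lambda\in\comp_1(\{1,\ldots,n\},\tau)$ inside $\bF_1(\pmn)$ cannot produce nonzero entries in rows outside $\tau$. But Lemma~\ref{lemma:abc} says the opposite: for $s=r-1$ the permissible matrices $A$ form a linear space of dimension $a_{\perm}$, and at the torus-invariant point whose nonzero rows form $C(z_0,z_1)$ one computes $a_{\perm}=\delta(r-1)=(r-1)(m-r+1)\ge 1$ (the rows of $A$ are exactly the combinations of the rows of $C(z_0,z_1)$ with the sign of $z_1$ flipped). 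So the tangent space to $\bF_1(\pmn)$ along $\comp_1(\{1,\ldots,n\},\tau)$ strictly exceeds $\dim\comp_1(\{1,\ldots,n\},\tau)=2(\kappa(r-1)-1)$; there genuinely are first-order deformations leaving the row pattern, you cannot conclude that $\comp_1(\{1,\ldots,n\},\tau)$ is a component, and "fixed points with different row subsets lie on different components" is not available (the subschemes for $\tau\ne\tau'$ even intersect whenever $\tau\cap\tau'\ne\emptyset$). As written, the anchoring argument has no route to completion.

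The paper circumvents exactly this difficulty by a global dimension count rather than a local rigidity statement. For $k=1$, $r>2$ it takes $Z=\comp_1(\{1,\ldots,n\},\tau)$ and $Z'=\comp_1(\{1,\ldots,n\},\tau')$ with $\tau,\tau'$ of smallest nonempty intersection, uses the tangent-space bound $2(\kappa(r-1)-1)+\delta(r-1)$ at general points of $Z,Z'$ to deduce that if $\bF_1(\pmn)$ were irreducible then $\codim_{\bF_1(\pmn)}Z'\le\delta(r-1)$, and contrasts this with $\codim_Z(Z\cap Z')$, which equals $2(r-2)n$ or $2(m-r+1)n$; the resulting inequalities fail for $r>2$. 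For $k=1$, $r=2$, $n\ge 3$ — a subcase your sketch does not address at all — it argues separately: by the Laubenbacher--Swanson primary decomposition \cite{laubenbacher}, $P^2_{m,n}$ is reducible and every point of it lies on a line in $P^2_{m,n}$, so the universal bundle over $\bF_1(P^2_{m,n})$ surjects onto $P^2_{m,n}$ and irreducibility of the Fano scheme would force irreducibility of $P^2_{m,n}$. You would need to supply arguments of this kind (or some other complete treatment) for both $k=1$ subcases before your proof is complete.
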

\begin{proof}
We may assume that $n>3$, since the case of $n=2$ is just {$\bF_1(P_{2,2}^2)\cong\bF_1(D_{2,2}^2)$}, which has two components, each a copy of $\PP^1$.

Suppose first that $k>1$. Then the reducibility  follows from Propositions \ref{prop:perm} and \ref{prop:distinct}. Indeed, $$\comp_k(\{1,
\ldots,n\},\{1,\ldots,r-1\})\ \textrm{and}\ \comp_k(\{1,
\ldots,n\},\{2,\ldots,r\})$$ are distinct irreducible components.

Suppose next that  $k=1$ and $r>2$. 
We will  consider subschemes $Z=\comp_1(\{1,
\ldots,n\},\tau)$ and $Z'=\comp_1(\{1,
\ldots,n\},\tau')$, where $|\tau|=|\tau'|=r-1$ and we take $\tau,\tau'$ to have smallest possible nonempty intersection, i.e.~
$|\tau\cap\tau'|=\max\{1,2(r-1)-m)\}$.
 Each of these schemes has dimension $2(\kappa(r-1)-1)$, and their intersection has codimension $2(r-2)n$ or $2(m-(r-1))n$ in $Z,Z'$. 
{Now, applying Lemma~\ref{lemma:abc} to a point in $Z$ or $Z'$ whose $r-1$ nonzero rows form the matrix $C(z_0,z_1)$ in \eqref{eq:bc} }
 shows that the tangent space dimension of $\fp$ at general points of $Z$ and $Z'$ is bounded above by $2(\kappa(r-1)-1)+\delta(r-1)$.  {Indeed, a straightforward calculation shows that the permissible matrices $A$ in Lemma~\ref{lemma:abc} are precisely those matrices whose $m-r+1$ rows are linear combinations of the $r-1$ vectors obtained from the rows of $C(z_0,z_1)$ by flipping the sign of $z_1$, yielding a total dimension of $\delta(r-1)$.}

 Now, suppose that $Y=\bF_1(P^r_{m,n})$ is irreducible. Then the codimension of $Z,Z'$ in $\fp$ is bounded above by $\delta(r-1)=(r-1)(m-(r-1))$.
Since we must have
$$\codim_{Z} Z\cap Z'\leq \codim_Y Z',$$
we get  
\begin{align*}
2(r-2)n \leq  (r-1)(m-(r-1))&\qquad\textrm{if}\qquad |\tau\cap\tau'|=1,\\
2(m-(r-1))n \leq (r-1)(m-(r-1))&\qquad\textrm{if}\qquad |\tau\cap\tau'|=2(r-1)-m.
\end{align*}
But both of these are impossible if $r>2$, so $\fp$ cannot be irreducible.

Finally, suppose that $k=1$ and $r=2$. Then $P_{m,n}^2$ is reducible by \cite{laubenbacher}, so $\bF_1(P^2_{m,n})$ must be as well. Indeed, it is immediate from the classification of minimal primary components of $P_{m,n}^2$ \cite[Theorem 4.1]{laubenbacher} that every point of $P^2_{m,n}$ is contained in a line in $P^2_{m,n}$.  So there is a surjective map from the universal bundle $\U$ over $\bF_1(P^2_{m,n})$ to $P_{m,n}^2$.  Hence if $\bF_1(P^2_{m,n})$ were irreducible, then $\U$ and $P_{m,n}^2$ would be too.
\end{proof}

The last result in this section studies the degrees of the components $\comp(s)$ of $\bF_{\kappa(s)}(\dmn)$ for $s=0,\ldots,r-1$.  

\begin{theorem}\label{thm:comp}
  The $s$-compression spaces form a subscheme $\comp(s)$ of the Fano scheme $\bF_{\kappa(s)}(\dmn)$ isomorphic to $\Gr(s+n-r+1, n)\times \Gr(m-s,m)$.  The embedding is via the Segre product of the $(m-s)$-fold Veronese of $\Gr(s+n-r+1, n)$ in its Pl\"ucker embedding, and the $(s+n-r+1)$-fold Veronese of $\Gr(m-s, m)$ in its Pl\"ucker embedding, possibly up to a linear projection.  Thus $\comp(s)$ has degree 
$$
{\delta(s)\choose{s(m-s)}}d_1 \cdot(m-s)^{(s+n-r+1)(r-s-1)}\cdot d_2 \cdot (s+n-r+1)^{s(m-s)}
$$
where  
$$d_1 ={((s+n-r+1)(r-s-1))!\over \prod_{1\le i \le r-s-1 <j\le n}(j-i)}\quad\textrm{ and }\quad
d_2 = {(s(m-s))!\over \prod_{1\le i \le s <j\le m}(j-i)}.$$
\end{theorem}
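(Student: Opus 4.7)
The plan is to reduce the theorem to a Chern-class computation on the product of Grassmannians. The first step is the identification of $\comp(s)$ with $\Gr(a, n) \times \Gr(s, m)$, where $a = s + 1 + n - r$; this was essentially done in the discussion following Definition~\ref{defn:compression}, sending $(V, W)$ to the linear span $E_{V,W}$ in $\PP^{mn-1}$ of all $\phi \in \Hom(\KK^n, \KK^m)$ carrying $V$ into $W$.  Since $\Gr(s, m) \cong \Gr(m-s, m)$ canonically via $W \mapsto \KK^m/W$, this also explains the appearance of the second factor in the theorem's statement.

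Next I will identify the restriction of the Plücker line bundle $\mathcal{O}_{\Gr(\kappa(s)+1, mn)}(1)$ to $\comp(s)$. Let $\mathcal{V}$ and $\mathcal{W}$ denote the universal subbundles on $\Gr(a,n)$ and $\Gr(s,m)$, of ranks $a$ and $s$, and let $\mathcal{Q}_m = \KK^m/\mathcal{W}$ be the rank-$(m-s)$ quotient bundle. The natural ``restrict-to-$V$-then-project-mod-$W$'' map yields a short exact sequence of vector bundles on $\Gr(a, n) \times \Gr(s, m)$:
\[
0 \to \mathcal{E} \to \Hom(\KK^n, \KK^m) \otimes \CO \to \mathcal{V}^* \otimes \mathcal{Q}_m \to 0,
\]
where $\mathcal{E}$ is the pullback to $\comp(s)$ of the universal subbundle of $\Gr(\kappa(s)+1, mn)$. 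Taking top exterior powers, using $\det(A \otimes B) = (\det A)^{\rank B} \otimes (\det B)^{\rank A}$ together with $\det \mathcal{Q}_m = \det \mathcal{W}^*$, and recalling that the Plücker $\mathcal{O}(1)$ on a Grassmannian is the determinant of the dual universal subbundle, I obtain
\[
\mathcal{O}(1)\big|_{\comp(s)} \;\cong\; \det \mathcal{E}^* \;\cong\; (\det \mathcal{V}^*)^{m-s} \otimes (\det \mathcal{W}^*)^{a}.
\]
This identifies the embedding as the Segre product of the $(m-s)$-fold Veronese of $\Gr(a, n)$ and the $a$-fold Veronese of $\Gr(s, m) \cong \Gr(m-s, m)$, possibly followed by a linear projection, accounting for the possibility that the restriction map $H^0(\mathcal{O}_{\Gr(\kappa(s)+1,mn)}(1)) \to H^0(\mathcal{O}_{\comp(s)}(1))$ is not surjective.

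Finally, the degree is a one-line intersection calculation. Writing $h_1, h_2$ for the pullbacks of the Plücker hyperplane classes from the two factors, the line bundle identification gives $c_1(\mathcal{O}(1)\vert_{\comp(s)}) = (m-s)\, h_1 + a\, h_2$, and hence
\[
\deg \comp(s) \;=\; \int_{\comp(s)} \bigl((m-s) h_1 + a h_2\bigr)^{\delta(s)}.
\]
As $h_1^i h_2^j$ is nonzero only when $i = a(r-s-1) = \dim \Gr(a, n)$ and $j = s(m-s) = \dim \Gr(s, m)$, only one term of the binomial expansion survives, yielding $\binom{\delta(s)}{s(m-s)} (m-s)^{a(r-s-1)} a^{s(m-s)} \cdot \deg \Gr(a, n) \cdot \deg \Gr(s, m)$. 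Substituting the standard hook-length formula $\deg \Gr(k, N) = (k(N-k))! / \prod_{1\le i \le k < j \le N}(j-i)$ for each Grassmannian factor yields the stated closed form. The main technical step is the second one --- correctly pinning down the line bundle via the exact sequence and the tensor-product determinant identity; after that, the degree follows from a routine binomial expansion and a known formula.
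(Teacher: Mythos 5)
Your proposal is correct, and it reaches the theorem by a genuinely different route than the paper. You work entirely with vector bundles: the tautological sequence $0\to\mathcal{E}\to\Hom(\KK^n,\KK^m)\otimes\CO\to\mathcal{V}^*\otimes\mathcal{Q}_m\to 0$ identifies the pullback of the Pl\"ucker line bundle as $\det\mathcal{E}^*\cong(\det\mathcal{V}^*)^{m-s}\otimes(\det\mathcal{W}^*)^{s+1+n-r}$, and the degree then falls out of the self-intersection $\int\bigl((m-s)h_1+(s+1+n-r)h_2\bigr)^{\delta(s)}$, in which only the single balanced term survives. The paper instead argues coordinate-by-coordinate: it writes a compression space as the row space of the $t(m-s)\times mn$ matrix $C$ with entries $b_{i,u}a_{j,v}$, uses $\GL$-invariance to see that the maximal minors of $C$ are polynomials of bidegree $(m-s,t)$ in the Pl\"ucker coordinates of $A$ and $B$, and then computes the ``pure'' minors explicitly (Lemma~\ref{lemma:pure}) to show every pure monomial occurs in the linear system; this both proves that the map determined by that system is an embedding and identifies concretely which Segre--Veronese coordinates survive the linear projection, after which the degree is obtained from multiplicativity of Hilbert polynomials and invariance of degree under a projection that is still an embedding. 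Your argument is shorter and avoids the combinatorial minor computation, but note that it leans on the assertion from Section~\ref{sec:torus} that $(V,W)\mapsto E_{V,W}$ is an isomorphism onto $\comp(s)$ (so that $\deg\comp(s)$ really equals the top self-intersection of the restricted $\CO(1)$, with no multiplicity from a non-birational map); the paper's proof of this theorem is self-contained on that point, since the pure-minor argument itself certifies the embedding. Conversely, your bundle-theoretic identification of $\CO(1)|_{\comp(s)}$ gives the bidegree $(m-s,\,s+1+n-r)$ more transparently than the invariant-theory step, and the two degree computations are equivalent routine calculations. Finally, your use of the hook-length formula for $\Gr(a,n)$ rather than the dual $\Gr(r-s-1,n)$ appearing in the statement is harmless, as dual Grassmannians have equal Pl\"ucker degrees.
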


\begin{proof}
  Let $t=s+n-r+1$.  An $s$-compression space consists of the matrices that send the rowspan of some full-rank $t\times n$ matrix $A=(a_{i,j})$ to the orthogonal complement of the rowspan of a full-rank $(m-s)\times m$ matrix $B=(b_{i,j})$.   
  We would like to express the Pl\"ucker coordinates of this point of $\comp(s)$ in terms of the Pl\"ucker coordinates of $A$ and of $B$.
  
  Now, the condition that a linear map $M=(x_{i,j})$ sends $A$ to the orthogonal complement of $B$ is precisely the condition $BMA^T = 0$, that is,
 $$(BMA^T)_{i,j}=\sum_{u=1}^m\sum_{v=1}^n b_{i,u}a_{j,v}x_{u,v}=0$$
  for $1\leq i \leq m-s$ and $1\leq j \leq t$.
Thus the compression space defined by $A$ and $B$ can be represented by the $t(m-s)\times mn$ matrix $C$ whose rows are indexed by $(i,j)$ and columns by $(u, v)$, and whose $(i,j), (u,v)$ entry is $b_{i,u}a_{j,v}$.

Write $\{p_I\}_{|I|= t}$ and $\{q_J\}_{|J|=m-s}$ for the maximal minors of $A$ and $B$ respectively.  Now the maximal minors of $C$ are polynomials of bidegree $(t(m-s),t(m-s))$ in the entries of $A$ and in the entries of $B$, and by construction, they are invariant under the action of $\GL_{t}$ and $\GL_{m-s}$ on $A$ and $B$ respectively.  It follows that they are polynomials of bidegree $m-s$ and $t$ in the Pl\"ucker coordinates of $A$ and of $B$, respectively (whenever they are nonzero), since the Pl\"ucker coordinates generate the rings of $\GL$-invariants.

We do not have a general formula for the maximal minors of $C$, but can describe the minors in special cases, and this will be enough to prove the theorem. Recall that a maximal minor of $C$ corresponds to choosing exactly $t(m-s)$ entries $x_{u,v}$ of $M$. Call a choice of exactly $m-s$ entries in each of $t$ columns of $M$ {\em $A$-pure}; call a choice of exactly $t$ entries in each of $m-s$ rows of $M$ {\em $B$-pure}.
In the following, we will give a formula expressing $A$-pure and $B$-pure minors as monomials in the $p_I$ and $q_J$.

Suppose we have an $B$-pure choice of entries, which after permuting columns can be described as follows.  Let $I_1, \ldots, I_{m-s}$ be order $t$ subsets of $\{1,\ldots, n\}$.  Then the $t(m-s)$ entries of $M$ 
\begin{equation}
\{(u, v): u = 1,\ldots, m-s, v \in I_i\} \label{eqn:pure-entries}
\end{equation}
are $B$-pure.  In other words, we choose $t$ entries in each of the first $m-s$ rows of $M$ as given by the sets $I_1,\ldots,I_{m-s}$.

We claim  in Lemma \ref{lemma:pure} below that the maximal minor of $C$ that corresponds to the choice of sets $I_1,\ldots,I_{m-s}$ in the way we just described is exactly
$$\pm p_{I_1}\cdots p_{I_{m-s}}{q_{\{1,\ldots,m-s\}}}^{t}.$$
Of course, the argument applies, after permuting columns, to any $B$-pure maximal minor; and a symmetric argument, which we omit, applies to any $A$-pure maximal minor.  Call a monomial in the $p_I$'s and $q_J$'s {\em pure} if it involves a unique $p_I$ or if it involves a unique $q_J$.  Then it would follow immediately from the claim that {\em every pure monomial appears as a maximal minor of $C$.}

Now, by the above discussion, we see that the map $X=\Gr(t,n)\times \Gr(m-s,m)\to \bF_{\kappa(s)}(\dmn)$ taking a pair of subspaces $A,B$ to the compression space mapping $A$ into the orthogonal complement of $B$ is given by a linear system $L$ which is a subspace of the complete linear system $|\CO_X(m-s,t)|$. Since every pure monomial appears in $L$, it immediately follows that the map determined by $L$ is an embedding.

The degree of $\Gr(a, b)$ in its Pl\"ucker embedding is known classically as $$(a(b-a))! /\prod_{1\le i\le a<j\le b} (j-i),$$ see e.g.~\cite{mukai}. Thus, $d_1$ and $d_2$ are just the degrees of $\Gr(s+n-r+1,n)$ and $ \Gr(m-s,m)$ in their Pl\"ucker embeddings. It follows that the degree of $X$ embedded by $|\CO_X(m-s,t)|$ is given by the formula in the Theorem, since the Hilbert polynomial of a Segre product is the product of the Hilbert polynomials. 
But since $L$ is a subspace of $|\CO_X(m-s,t)|$ that also gives an embedding, the degree of $X$ embedded via $L$ is the same.
\end{proof}

\begin{lemma}\label{lemma:pure}
  The maximal minor of $C$ corresponding to the sets $I_1,\ldots,I_{m-s}$ as in \eqref{eqn:pure-entries} is
$$\pm p_{I_1}\cdots p_{I_{m-s}}{q_{\{1,\ldots,m-s\}}}^{t}.$$
\end{lemma}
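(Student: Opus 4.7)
The plan is to recognize that $C$ is essentially a Kronecker product. Writing $A$ and $B$ for the $t\times n$ and $(m-s)\times m$ matrices (with $t = s+n-r+1$), the matrix $C$ is precisely the Kronecker product $B\otimes A$: its rows are indexed by pairs $(i,j)$, its columns by pairs $(u,v)$, and the entry at $((i,j),(u,v))$ is $b_{i,u}a_{j,v}$.

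Next, I would group the selected columns into blocks according to $u\in\{1,\ldots,m-s\}$. With this block structure, the submatrix $M$ we want the determinant of becomes an $(m-s)\times(m-s)$ block matrix of $t\times t$ blocks, and the $(i,u)$-block is exactly $b_{i,u}A_{I_u}$, where $A_{I_u}$ denotes the $t\times t$ submatrix of $A$ on the columns indexed by $I_u$. The key observation is the factorization
\[
M \;=\; (B'\otimes I_t)\cdot\operatorname{diag}(A_{I_1},\ldots,A_{I_{m-s}}),
\]
where $B'$ is the $(m-s)\times(m-s)$ submatrix of $B$ on the first $m-s$ columns. One simply verifies this by multiplying out the block product.

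Taking determinants and using the standard identity $\det(B'\otimes I_t)=\det(B')^{\,t}$ gives
\[
\det(M) \;=\; \det(B')^{\,t}\,\prod_{u=1}^{m-s}\det(A_{I_u}) \;=\; q_{\{1,\ldots,m-s\}}^{\,t}\,\prod_{u=1}^{m-s} p_{I_u},
\]
up to a sign coming from the chosen orderings of the row/column indices in $C$ versus the sign conventions for the Pl\"ucker coordinates. This is exactly the claimed formula.

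There is essentially no obstacle here: the only thing to be careful about is the bookkeeping of indices to make sure the block factorization is set up correctly, and the fact that the sign is absorbed into the $\pm$ in the statement means no sign tracking is required.
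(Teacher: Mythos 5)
Your proof is correct and is essentially the paper's own argument: the paper also orders the rows and selected columns so that the chosen submatrix becomes the block matrix with $(i,u)$-block $b_{i,u}A_{I_u}$, factors it as $(B'\otimes \mathbf{Id}_t)\cdot\operatorname{diag}(A_{I_1},\ldots,A_{I_{m-s}})$, and reads off the determinant as $q_{\{1,\ldots,m-s\}}^{\,t}\,p_{I_1}\cdots p_{I_{m-s}}$ up to sign. No substantive difference.
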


\noindent We give a running illustration of the proof that follows in Example~\ref{ex:pure}.

\begin{proof}[Proof of Lemma~\ref{lemma:pure}]
The rows of the $t(m-s)\times t(m-s)$ submatrix $C'$ of $C$ in question are indexed by pairs $(i,j)$ with $i=1,\ldots,m-s$ and $j=1,\ldots,t$.  Order them lexicographically.  The columns of $C'$ are indexed by entries $(i, j)$ with $i = 1,\ldots, t, j \in I_i$.  Order these lexicographically as well; the determinant is of course preserved up to sign.  Then $C'$ can be regarded as a block matrix with $(m-s)^2$ blocks $G_{i,j}$ of size $t\times t$ 
$$C'=\left(
\begin{matrix}
G_{1,1}&\cdots&G_{1,m-s}\\
\vdots&\ddots&\vdots\\
G_{m-s,1}&\cdots&G_{m-s,m-s}
\end{matrix}
\right)
,$$
where $G_{i,j}$ is the $t\times t$ matrix
$$G_{i,j}=\left(
\begin{matrix}
b_{i,j}a_{1,I_{j,1}} &\cdots & b_{i,j}a_{1,I_{j,t}}\\
\vdots&\ddots&\vdots\\
b_{i,j}a_{t,I_{j,1}} &\cdots & b_{i,j}a_{t,I_{j,t}}
\end{matrix}
\right)
.$$
Here, $I_{j,l}$ denotes the $l^{\rm{th}}$ element of $I_j$ in order.

But then $C'$ evidently factors into the product of two matrices which can again be regarded as having $(m-s)^2$ blocks of size $t\times t$.  
The first matrix has $(i,j)$-block equal to $b_{i,j}\cdot {\mathbf{Id}}_{t}$.  In the second matrix, each diagonal block $(i,i)$ equal to the $I_i$-submatrix of $A$, and every off-diagonal block is zero.  Thus the determinant of $C'$ is
$$p_{I_1}\cdots p_{I_{m-s}}{q_{\{1,\ldots,m-s\}}}^{t}.$$
\end{proof}

\begin{ex}\label{ex:pure}
Let $r=m=n=4$ and $s=1$, so $t=2$.  A maximal submatrix $C'$ of $C$ corresponds to a choice of $6$ entries of the $4\times 4$ matrix $X$.  Suppose we pick the $B$-pure submatrix given by
$$\left(\begin{matrix}
* & * & \cdot & \cdot \\
* & \cdot & * & \cdot \\
\cdot & * & * & \cdot \\
\cdot & \cdot & \cdot & \cdot 
\end{matrix}
\right)$$
i.e.~the entries $(1,1),(1,2),(2,1),(2,3),(3,2),(3,3)$.  Then $C'$ is 
$$\bordermatrix{     &(1,1)        &(1,2)       &(2,1)       &(2,3)       &(3,2)       &(3,3)       \cr
                (1,1)&b_{11}a_{11} &b_{11}a_{12}&b_{12}a_{11}&b_{12}a_{13}&b_{13}a_{12}&b_{13}a_{13}\cr
                (1,2)&b_{11}a_{21} &b_{11}a_{22}&b_{12}a_{21}&b_{12}a_{23}&b_{13}a_{22}&b_{13}a_{23}\cr
                (2,1)&b_{21}a_{11} &b_{21}a_{12}&b_{22}a_{11}&b_{22}a_{13}&b_{23}a_{12}&b_{23}a_{13}\cr
                (2,2)&b_{21}a_{21} &b_{21}a_{22}&b_{22}a_{21}&b_{22}a_{23}&b_{23}a_{22}&b_{23}a_{23}\cr
                (3,1)&b_{31}a_{11} &b_{31}a_{12}&b_{32}a_{11}&b_{32}a_{13}&b_{33}a_{12}&b_{33}a_{13}\cr
                (3,2)&b_{31}a_{21} &b_{31}a_{22}&b_{32}a_{21}&b_{32}a_{23}&b_{33}a_{22}&b_{33}a_{23}
                }$$
and it factors as the product
$$\left(\begin{matrix}
b_{11} &  &b_{12} &  &b_{13} & \\ 
& b_{11} &  &b_{12} &  &b_{13}  \\
b_{21} &  &b_{22} &  &b_{23} & \\
&b_{21} &  &b_{22} &  &b_{23}  \\
b_{31} &  &b_{32} &  &b_{33} & \\
&b_{31} &  &b_{32} &  &b_{33} 
\end{matrix}\right)
\left(\begin{matrix}
a_{11} & a_{12} &&&&\\
a_{21} & a_{22} &&&&\\
&& a_{11} & a_{13} &&\\
&& a_{21} & a_{23} &&\\
&&&&a_{12} & a_{13} \\
&&&&a_{22} & a_{23}
\end{matrix}\right).
$$
This shows that $\det C' = p_{12}p_{13}p_{23}q_{123}^2,$
as claimed in Lemma~\ref{lemma:pure}.
\end{ex}

\section{Smoothness and Connectedness}\label{sec:smooth}
\subsection{Smoothness and connectedness for $\bF_k(\dmn)$}\label{subsec:smooth1}
Now we are equipped to describe, in the theorem restated below, which Fano schemes $\fd$ are smooth and irreducible.
{
\renewcommand{\thetheorem}{\ref{thm:smooth}}
\begin{theorem}
 Let $1\leq k <(r-1)n$. 
  \begin{enumerate}[(i)]
	  \item The Fano scheme $\bF_k(\dmn)$ is smooth if and only if $k> (r-2)n.$ \label{partone}
	  \item The Fano scheme $\bF_k(\dmn)$ is irreducible if and only if $m\neq n$ and \newline $k>(r-2)n+m-r+1$. \label{parttwo}
\end{enumerate}
\end{theorem}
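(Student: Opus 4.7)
The plan is to prove both parts of Theorem~\ref{thm:smooth} using the compression components $\comp_k(s)$ together with the tangent-space bounds of Theorem~\ref{thm:ts}(ii). The key tool is Remark~\ref{rem:intersect}: every irreducible component of $\bF_k(\dmn)$ contains a torus fixed point, which by Proposition~\ref{prop:fixed} must lie in some $\comp_k(s)$ with $k\le \kappa(s)$.

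For the forward implication of (i), suppose $k>(r-2)n$. I would first show that whenever $k\le\kappa(s)$, both inequalities \eqref{eq:smootha} and \eqref{eq:smoothb} of Theorem~\ref{thm:ts}(ii) hold. This reduces to the inequalities $(m-s)(s+n-r+2)\ge (m-r+2)n$ on $s\in\{0,\ldots,r-2\}$ and $(s+n-r+1)(m-s+1)\ge (m-r+2)n$ on $s\in\{1,\ldots,r-1\}$; both quadratics are concave in $s$, so it suffices to check the endpoints, where the required inequalities follow from $m\le n$. Theorem~\ref{thm:ts}(ii) then gives $\dim T_\eta\bF_k(\dmn)\le\dim\comp_k(s)$ at every $\eta\in\comp_k(s)$. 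Since $\comp_k(s)$ is already an irreducible component through $\eta$ by Corollary~\ref{cor:comp}, equality must hold, and $\bF_k(\dmn)$ agrees locally near $\eta$ with the smooth variety $\comp_k(s)$. Hence every such $\eta$ is smooth and lies in no other component, and $\bF_k(\dmn)$ is a disjoint union of smooth compression components.

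For the converse of (i), suppose $k\le(r-2)n$. Since $m\ge r$ one readily checks $k\le\kappa(r-1)$ and $k\le\kappa(r-2)$, so Corollary~\ref{cor:comp} produces two irreducible components $\comp_k(r-1)$ and $\comp_k(r-2)$, which are distinct by Proposition~\ref{prop:distinct}. I would then exhibit a torus fixed point $P$ in their intersection by zeroing rows $1,\ldots,m-r+1$ entirely (producing the $(m-r+1)\times n$ block required by the $(r-1)$-compression) and zeroing row $m-r+2$ outside the last column (producing the $(m-r+2)\times(n-1)$ block required by the $(r-2)$-compression); the remaining $(r-2)n+1$ admissible positions easily accommodate the $k+1\le(r-2)n+1$ stars. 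Since $P$ lies on two distinct irreducible components of $\bF_k(\dmn)$, its local ring has at least two minimal primes and $P$ is singular. This intersection argument is essential for the borderline case $k=(r-2)n$, where the direct singular-point construction in the proof of Theorem~\ref{thm:rho} no longer applies.

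Part (ii) proceeds along the same lines. For the forward direction, assume $m\ne n$ and $k>\kappa(r-2)=(r-2)n+m-r+1$. The hypothesis $m<n$ yields $\kappa(0)\le\kappa(r-2)$, and convexity of $\kappa$ then forces $\kappa(s)\le\kappa(r-2)<k$ for every $s\le r-2$, leaving $\comp_k(r-1)$ as the sole compression component. Since $k>\kappa(r-2)\ge(r-2)n$, part (i) applies and combines with Remark~\ref{rem:intersect} to give $\bF_k(\dmn)=\comp_k(r-1)$, which is irreducible. Conversely, if $m=n$ then $\kappa(0)=\kappa(r-1)$, so both $\comp_k(0)$ and $\comp_k(r-1)$ exist; and if $k\le\kappa(r-2)$ then both $\comp_k(r-2)$ and $\comp_k(r-1)$ exist. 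In either case Proposition~\ref{prop:distinct} produces two distinct irreducible components, so $\bF_k(\dmn)$ is reducible.
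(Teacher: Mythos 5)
Your proposal is correct and follows essentially the same route as the paper: smoothness for $k>(r-2)n$ by verifying conditions \eqref{eq:smootha}--\eqref{eq:smoothb} for all relevant $s$ (your direct concavity/endpoint check is equivalent to the paper's convexity-plus-identity computation) and then invoking Corollary~\ref{cor:comp} together with Remark~\ref{rem:intersect}/Proposition~\ref{prop:fixed}; singularity for $k\le(r-2)n$ via a torus fixed point lying on the distinct components $\comp_k(r-1)$ and $\comp_k(r-2)$; and part (ii) by convexity of $\kappa$ to eliminate all compression components except $\comp_k(r-1)$ in one direction and by exhibiting two distinct compression components ($\comp_k(0)$, $\comp_k(r-1)$ when $m=n$, or $\comp_k(r-2)$, $\comp_k(r-1)$ when $k\le\kappa(r-2)$) in the other. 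No gaps.
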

\addtocounter{theorem}{-1}
}

\begin{proof}
For part (\ref{partone}), suppose that $k> (r-2)n$. Then for $0\leq s \leq r-2$, $k> \kappa(s)-(m-s-1)$. Indeed, the function $\kappa(s)-(m-s-1)$ is convex as a function of $s$, and yields $(r-2)n$ and $(r-2)m$ when evaluated at $s=r-2$ and $s=0$, respectively. Now, since 
$$\kappa(s)-(n-r+s)=\kappa(s-1)-(m-(s-1)-1)$$
it follows that conditions \eqref{eq:smootha} and \eqref{eq:smoothb} of Theorem \ref{thm:ts} hold for all $0\leq s \leq r-1$. By Corollary  \ref{cor:comp}, we may conclude that all torus fixed points of $\fd$ are smooth, so $\fd$ must be smooth.

If on the other hand $k\leq(r-2)n$, consider any point of $\bF_k(\dmn)$ corresponding to an $m\times n$ matrix of linear forms whose first $(m-r)+2$ rows contain at most one nonzero entry. This is a point of both $\comp_k(r-1)$ and $\comp_k(r-2)$, which are distinct irreducible components by Corollary \ref{cor:comp} and Proposition \ref{prop:distinct}.  Hence $\bF_k(\dmn)$ is not smooth there.

For part (\ref{parttwo}), assume $k>(r-2)n+(m-r)+1$ and $m\neq n$. Then we claim that the only compression component appearing in $\fd$ is $\comp_k(r-1).$
Indeed, we have that $\kappa(r-2)=(r-2)n+m-r+1$ and $\kappa(r-2)-\kappa(0)=(r-2)(n-m-1)\geq 0$. Since $\kappa(s)$ is a convex function, we have $k>\kappa(s)$ for $s=0,\ldots,r-2$.  Now Proposition \ref{prop:fixed} shows that all torus fixed points of $\fd$ must lie in $\comp_k(r-1)$.
Since these points are smooth by part (\ref{partone}), $\comp_k(r-1)$ is the only irreducible component of $\fd$. 
 
 On the other hand, if $k\leq (r-2)n+(m-r)+1$, then $k\le \kappa(r-2)$ so $\comp_k(r-2)$ is also a component. Likewise, if $n=m$ then both $\comp_k(0)$ and $\comp_k(r-1)$ are components since $k\le(r-1)n-1$ by assumption, and $\kappa(0)=\kappa(r-1) = (r-1)n-1$.
\end{proof}

\begin{cor}\label{cor:disjoint}
  If $k> (r-2)n$, then $\bF_k(\dmn)$ is the disjoint union of compression space components $\comp_k(s)$.
\end{cor}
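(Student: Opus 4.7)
The plan is to combine the smoothness result from Theorem~\ref{thm:smooth}(i) with Remark~\ref{rem:intersect} and the fact that the $\comp_k(s)$ are already known to be irreducible components. Under the hypothesis $k > (r-2)n$, Theorem~\ref{thm:smooth}(i) tells us that $\fd$ is smooth. In particular, $\fd$ is a disjoint union of its irreducible components: at any closed point the local ring is a regular (hence integral) domain, so exactly one irreducible component passes through it. Therefore it suffices to identify the irreducible components.

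The next step is to recall from Corollary~\ref{cor:comp} that for each $s$ with $0 \le s \le r-1$ and $k \le \kappa(s)$, the subscheme $\comp_k(s)$ is an irreducible component of $\fd$, and by Proposition~\ref{prop:distinct} these components are all distinct. Conversely, by Remark~\ref{rem:intersect}, every irreducible component $Z$ of $\fd$ must contain a torus fixed point, and any torus fixed point lies in some $\comp_k(s)$ by Proposition~\ref{prop:fixed}. Thus $Z$ meets $\comp_k(s)$ for some $s$; but by the smoothness/disjointness observation above, components either coincide or are disjoint, so $Z = \comp_k(s)$.

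Putting the two directions together, the irreducible components of $\fd$ are precisely the $\comp_k(s)$ (for the values of $s$ with $k \le \kappa(s)$), and since $\fd$ is smooth these components are pairwise disjoint. This gives the claimed decomposition. There is no real obstacle here: the result is essentially a bookkeeping consequence of the already established smoothness (Theorem~\ref{thm:smooth}(i)), identification of compression components (Corollary~\ref{cor:comp}), and the torus-fixed-point argument (Remark~\ref{rem:intersect}).
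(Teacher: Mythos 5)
Your proof is correct and follows essentially the same route as the paper: smoothness from Theorem~\ref{thm:smooth}(a), the torus-fixed-point argument of Remark~\ref{rem:intersect} to force every irreducible component to meet some $\comp_k(s)$, and smoothness (one component through each point) to conclude equality and disjointness. The extra details you supply (regular local rings are domains, Corollary~\ref{cor:comp} and Proposition~\ref{prop:distinct} for the components being distinct) are just explicit versions of what the paper leaves implicit.
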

\begin{proof}
  The Fano scheme $\fd$ is smooth by Theorem \ref{thm:smooth}\eqref{partone}. Let $Z$ be any irreducible component of $\fd$. By Remark \ref{rem:intersect}, $Z$ intersects a compression space component $\comp_k(s)$. But since $\fd$ is smooth, we must have $Z=\comp_k(s)$. 
\end{proof}
\begin{rem}\label{rem:pazzis}
	It also follows from the results of \cite{beasley:87a} that, under the assumption  $k\geq \kappa(r-2)$, the only components of $\bF_k(\dmn)$ are $\comp_k(s)$ for $s\in\{r-1,r-2,1,0\}$. Furthermore, \cite[Theorem 10]{pazzis} implies that if $k\geq \kappa(r-3)$, then the only components of $\bF_k(\dmn)$ are $\comp_k(s)$ for $s\in \{r-1,r-2,r-3,2,1,0\}$.
\end{rem}

We can also give a near-complete description of when $\fd$ is connected.  

\begin{theorem}\label{thm:connecteddet}
  Suppose that $1\leq k <(r-1)n$. 
\begin{enumerate}
  \item  \label{item:conone} If there is some $s=0,\ldots,r-2$ such that $k\le \kappa(s)$ and such that the conditions (\ref{eq:smootha}) and (\ref{eq:smoothb}) in Theorem~\ref{thm:ts} hold, then 
  $\bF_k(\dmn)$ is disconnected.
  \item \label{item:contwo} 
  If not, then $\fd$ is either connected or has exactly two connected components.  
Furthermore, $\fd$ is connected if $$k>\kappa(0)\quad\textrm{ or }\quad k\le \kappa(0)-(m-r+1)(r-1).$$
In particular, if $r=m$  and either condition \eqref{eq:smootha} or condition \eqref{eq:smoothb} fails to hold for every $s=0,\ldots,r-2$ such that $k\le \kappa(s)$, then $\fd$ is  connected.

\end{enumerate}
\end{theorem}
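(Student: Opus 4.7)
The plan is to deduce part (1) directly from Corollary~\ref{cor:comp}, and to establish part (2) via explicit staircase constructions that produce intersections of compression components. For part (1): if $s\in\{0,\ldots,r-2\}$ satisfies $k\le\kappa(s)$ and both of \eqref{eq:smootha} and \eqref{eq:smoothb} hold, then Corollary~\ref{cor:comp} says every point of $\comp_k(s)$ is smooth in $\fd$. Since any point lying on two distinct irreducible components would be singular, $\comp_k(s)$ is an entire connected component of $\fd$. Since $k<(r-1)n$ implies $k\le\kappa(r-1)$, the component $\comp_k(r-1)$ is also present and distinct from $\comp_k(s)$ by Proposition~\ref{prop:distinct}, giving at least two connected components.

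The core of part (2) is a staircase argument. I claim that when $\comp_k(s)$ and $\comp_k(s+1)$ are both components and \eqref{eq:smootha} fails at $s$, then $\comp_k(s)\cap\comp_k(s+1)\ne\emptyset$. Take the standard $s$-compression space with upper-left $(m-s)\times(s\!+\!1\!+\!n\!-\!r)$ zero block, together with the standard $(s+1)$-compression space with upper-left $(m\!-\!s\!-\!1)\times(s\!+\!2\!+\!n\!-\!r)$ zero block. Their intersection (as linear subspaces of matrices in $\PP^{mn-1}$) has projective dimension $\kappa(s)-(m-s-1)$; when \eqref{eq:smootha} fails at $s$, this accommodates a $k$-plane, producing a common point of $\comp_k(s)$ and $\comp_k(s+1)$. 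A short computation from the quadratic expression for $\kappa$ yields the identity $\kappa(s)-(n-r+s)=\kappa(s-1)-(m-s)$, so failure of \eqref{eq:smoothb} at $s$ is the same as failure of \eqref{eq:smootha} at $s-1$, and similarly yields $\comp_k(s-1)\cap\comp_k(s)\ne\emptyset$. I also record that failure of \eqref{eq:smootha} at $s$ implies $k\le\kappa(s+1)$, so $\comp_k(s+1)$ is automatically a component.

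By convexity of $\kappa(s)$ in $s$, the index set $\{s:k\le\kappa(s)\}$ takes the form $\{0,\ldots,s_0\}\cup\{t_0,\ldots,r-1\}$ for some $s_0\le t_0$. Under the hypothesis of part (2), each $s\in\{0,\ldots,r-2\}$ in this set has one of \eqref{eq:smootha}, \eqref{eq:smoothb} failing, providing a staircase connection to an adjacent compression component. A routine boundary analysis (checking what happens at $s_0$ and $t_0$ when a gap is present) confirms that these connections stay within each of the two intervals; combined with Remark~\ref{rem:intersect}, this bounds the number of connected components of $\fd$ by two.

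Finally, for the connectedness statements: $k>\kappa(0)$ implies by convexity that the lower interval is empty, leaving a single chain. When $k\le\kappa(0)-(m-r+1)(r-1)=(r-1)^2-1$, any $k$-plane supported in the upper-left $(r-1)\times(r-1)$ corner of the matrix lies in both $\comp_k(0)$ (the last $n-r+1$ columns vanish) and $\comp_k(r-1)$ (the last $m-r+1$ rows vanish), fusing the two clusters. For $r=m$: if $k>\kappa(0)$ we are done by the previous criterion; otherwise, the index $s=0$ witnesses the hypothesis of (2), and since \eqref{eq:smoothb} is vacuous at $s=0$, condition \eqref{eq:smootha} must fail, forcing $k\le\kappa(0)-(m-1)=(m-1)^2-1=(r-1)^2-1$, which reduces to the second criterion. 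The main obstacle I anticipate is the explicit dimension count for the staircase intersection, together with the careful bookkeeping needed to rule out configurations where the two intervals might fail to chain up within themselves at their boundary indices $s_0$ and $t_0$.
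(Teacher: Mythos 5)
Your overall strategy is the same as the paper's: part (1) via Corollary~\ref{cor:comp} and Proposition~\ref{prop:distinct}, and part (2) by gluing adjacent compression components at ``staircase'' torus fixed points, using the identity that failure of \eqref{eq:smoothb} at $s$ is the same as failure of \eqref{eq:smootha} at $s-1$, together with the corner fixed point supported in an $(r-1)\times(r-1)$ block for the criterion $k\le\kappa(0)-(m-r+1)(r-1)$ and the resulting reduction in the case $r=m$. Those pieces are correct and match the paper.

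There is, however, a genuine gap in your count of ``at most two connected components.'' You track the index set $I=\{s: k\le\kappa(s)\}$, note it is a prefix union a suffix by convexity of $\kappa$, and conclude that since the staircase connections stay inside the (at most two) intervals of $I$, there are at most two clusters. That inference is invalid: the connections staying inside an interval does not make the interval a single cluster. Note that $\comp_k(s)$ and $\comp_k(s+1)$ are glued by your construction exactly when \eqref{eq:smootha} fails at $s$ (failure of \eqref{eq:smoothb} at $s+1$ is the \emph{same} condition), so the cluster structure is governed by the set $S=\{s\le r-2: k\le\kappa(s)-(m-s-1)\}$, not by $I$. For example, with $r=4$, $m=n=6$, $k=12$ one has $\kappa=(17,15,15,17)$, so $I=\{0,1,2,3\}$ is a single interval, yet \eqref{eq:smootha} fails only at $s=0$ and $s=2$; the compression components split into the two chains $\{\comp_{12}(0),\comp_{12}(1)\}$ and $\{\comp_{12}(2),\comp_{12}(3)\}$, so an interval of $I$ can break internally and your argument, read literally, would wrongly assert a single cluster there. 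Worse, without any control on $S$, nothing in your write-up rules out $S$ having two or more gaps, which would allow three or more clusters and destroy the bound. The missing ingredient is the paper's observation that $\kappa(s)-(m-s-1)=s^2+s(n-m-r+2)+m(r-2)$ is itself convex in $s$ and satisfies $\kappa(r-2)-(m-r+1)=(r-2)n\ge (r-2)m=\kappa(0)-(m-1)$; hence $S$ is a prefix union a suffix of $\{0,\ldots,r-2\}$ with at most one gap (and contains $r-2$ whenever it contains $0$). Combining this with the facts you do establish --- every index present lies in $S\cup(S+1)\cup\{r-1\}$ under the part~(2) hypothesis, and consecutive components glue exactly at indices of $S$ --- gives the at-most-two bound, and also cleanly justifies the sub-case $k>\kappa(0)$ (then $0\notin S$, so $S$ is a suffix and the components form one chain). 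As written, your ``routine boundary analysis'' examines the wrong set.
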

\begin{proof}

Given $s\in\{0,\ldots,r-2\}$ such that $k\le\kappa(s)$ and \eqref{eq:smootha} and \eqref{eq:smoothb} hold, we want to show that $\fd$ is disconnected.  Since $k\le \kappa(s)$, the scheme $\fd$ contains an irreducible component $\comp_k(s)$.  Since (\ref{eq:smootha}) and (\ref{eq:smoothb}) hold by assumption, all of the points of $\comp_k(s)$ are smooth by Corollary~\ref{cor:comp}. Thus $\comp_k(s)$ is itself a connected component of $\bF_k(\dmn)$.  
On the other hand, since $k\le (r-1)n -1 = \kappa(r-1)$, the scheme $\fd$ also has an irreducible component $\comp_k(r-1)$, distinct from $\comp_k(s)$ by Proposition~\ref{prop:distinct}.  Hence $\bF_k(\dmn)$ is disconnected.

We now prove part (\ref{item:contwo}) of the theorem.  Suppose instead that for every $s=0,\ldots,r-2$ such that $k\le \kappa(s)$, at least one of conditions (\ref{eq:smootha}) and (\ref{eq:smoothb}) holds.
Let us first show that the compression components $\comp_k(s)$ form either one or two connected components; 
since all irreducible components of $\fd$ meet some compression component by Remark~\ref{rem:intersect}, the same would be true for $\fd$.

Since $k \le \kappa(r-1)$, we again have that $\comp_k(r-1)$ is an irreducible component of $\fd$.  Now let $S\subseteq \{0,\ldots,r-2\}$ be the set of numbers $s$ such that $\kappa(s)-(m-s-1)\ge k$. 
Now we claim that a number $s$ belonging to $S$ means precisely that both $\comp_k(s)$ and $\comp_k(s+1)$ appear as components of $\fd$, and the two components meet. Indeed, they would intersect at  a torus fixed point of the form
$$\left(\begin{array}{c c c c c c}
\vspace{-.1cm}
0 		& \cdots & 0 		& 0 &\quad &  \cr
\vdots	& \ddots & \vdots	&\vdots   & 		 &   \cr
0 		& \cdots & 0 		&0 && \\
0 		& \cdots & 0 		& && \\
&&&&&\\
&&&&&
\end{array}\right)$$
where the upper left $(m-s)\times(s+1+n-r)$ block, along with the top upper left $(m-s-1)\times(s+2+n-r)$ block, is zero, and furthermore all but $k+1$ of the unmarked entries are zero.  (The point is that the number of blank entries in the matrix above is $\kappa(s)-(m-s-1) + 1 \ge k+1$, i.e.~we can actually fit $k+1$ independent forms into the blanks.)

Furthermore, the only compression space components appearing in $\fd$, apart from $\comp_k(r-1)$, have the form $\comp_k(s)$ and $\comp_k(s+1)$ for $s\in S$.  This is because if $k\leq \kappa(s)$ for some $0<s<r-1$ with $s\not \in S$, then since \eqref{eq:smootha} doesn't hold, \eqref{eq:smoothb} must hold.  Then we must have
$$k\le \kappa(s)-(n-r+s)=\kappa(s-1)-(m-(s\!-\!1)-1)$$
so $s-1\in S$.

Now one can check that $\kappa(s)-(m-s-1)$ is a convex function of $s$ and that it always takes a value when $s=r-2$ at least as large as that when $s=0$.  Now we have three cases: if $S$ is empty, then $\comp_k(r-1)$ is the only compression component so $\fd$ is connected.  Next, $S$ could be a single interval $S=\{a,a+1,\ldots,r-2\}$.  In that case, by our characterization of $S$, the compression components are precisely $\comp_k(a),\ldots,\comp_k(r-1)$, and they are again all connected.
Otherwise, $S$ must have the form $s=\{0,\ldots,a\} \sqcup \{b,\ldots,r-2\}$.  In this last case, the compression components are $\comp_k(0),\ldots,\comp_k(a+1), \comp_k(b),\ldots,\comp_k(r-1)$, and they form at most two connected components.

Now we prove that the extra conditions specified in \eqref{item:contwo} show that the union of the compression components is connected (and so $\fd$ is connected).  First, if $k> \kappa(0)$, then the last case above can't occur, so $\fd$ is connected. If instead $k\leq \kappa(0)-(m-r+1)(r-1)$, then $\comp_k(0)$ and $\comp_k(r-1)$ both appear, and they must intersect. Indeed, they meet at a torus fixed point in which the first  $1+n-r$ columns and first $m-(r-1)$ rows are zero. Together with the previous paragraph, this implies that $\fd$ actually has only one connected component.  

In particular, if $m=r$, then $\fd$ is always connected.  We have already proved this if $k>\kappa(0)$.  If $k\le \kappa(0)$, then by the assumption of part~\eqref{item:contwo}, condition~\eqref{eq:smootha} holds, i.e.~$k \le \kappa(0)-(m-1)= \kappa(0)-(m-r+1)(r-1),$ so we are again done by the previous analysis.
\end{proof}

Now, Theorem \ref{thm:connected}, which characterizes exactly when 
$\mathbf{F}_k(D^m_{m,n})$ is connected, follows immediately from specializing Theorem~\ref{thm:connecteddet} to the case $r=m$.

\begin{rem}\label{rem:connectedness}
  In the special case $r=m$, the proof of Theorem \ref{thm:connecteddet} implies the following statement: \emph{If $\bF_k(D_{m,n}^m)$ is disconnected, then it has an irreducible component which is its own connected component.} This statement is not always true when $r<m$. Indeed, consider the example $n=m=8$, $r=7$, $k=40$. Then $\bF_k(\dmn)$ consists only of the components $\comp_k(0),\comp_k(1),\comp_k(r-2),\comp_k(r-1)$ by  Remark \ref{rem:pazzis}. The components $\comp_k(0)$ and $\comp_k(1)$ intersect, as do $\comp_k(r-2)$ and $\comp_k(r-1)$, and there are no other intersections. Hence the Fano scheme is disconnected, but there is no irreducible connected component.
This makes it difficult to detect disconnectedness of $\bF_k(\dmn)$ without complete knowledge of its irreducible components.

In fact, we don't know if there is a situation in which the compression components form two distinct connected components, but some other component of $\fd$ connects them.  The first case that is unknown to us is $n=m=5,~k=10,$ and $ r=4.$  In this case, the compression components split into two connected components as in the proof of Theorem~\ref{thm:connecteddet}, but $k$ is not large enough to use Remark~\ref{rem:pazzis} to guarantee that no other types of components appear.
\end{rem}

We have seen that connectedness of $\fd$ can be non-monotonic with $k$ (Table~\ref{table:det}.)  But it is nevertheless true that if $k$ is sufficiently small, 
then $\fd$ is connected.
\begin{cor}\label{cor:connected}
If $$k\leq m(r-2)-\frac{ ( (n-m)-(r-2))^2}{4},$$
then the Fano scheme $\fd$ is connected.
\end{cor}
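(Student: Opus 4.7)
My plan is to reuse the auxiliary set
$$S \;=\; \{\,s \in \{0,\ldots,r-2\} \,:\, \kappa(s) - (m-s-1) \geq k\,\}$$
that appears in the proof of Theorem~\ref{thm:connecteddet}, and to show that the corollary's hypothesis forces $S$ to equal the full interval $\{0, 1, \ldots, r-2\}$. Once this is established, the case analysis in that proof (the case $S = \{a, \ldots, r-2\}$, applied with $a = 0$) immediately gives that the compression components of $\fd$ are precisely $\comp_k(0), \comp_k(1), \ldots, \comp_k(r-1)$, with consecutive pairs intersecting, so they form a single connected chain. Connectedness of $\fd$ then follows from Remark~\ref{rem:intersect}, since any irreducible component of $\fd$ must meet one of these compression components.

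The main computation is to identify the minimum of $\kappa(s) - (m-s-1)$ as a function of~$s$. A direct expansion using \eqref{eqn:kappa} rewrites this quantity as an upward-opening quadratic in~$s$, specifically
$$\kappa(s)-(m-s-1) \;=\; mn - (m-s)(s+n-r+2) \;=\; s^2 + (n-r+2-m)\,s + m(r-2).$$
Routine one-variable optimization then identifies its global real minimum as $m(r-2) - ((n-m)-(r-2))^2/4$, which matches the bound in the corollary exactly. Hence the hypothesis forces $\kappa(s) - (m-s-1) \geq k$ for \emph{every} integer~$s$, so in particular $S = \{0, 1, \ldots, r-2\}$ as desired.

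I do not anticipate any serious obstacle: the whole argument is a repackaging of the structural analysis already carried out inside Theorem~\ref{thm:connecteddet}, together with the quadratic optimization above. One minor point worth recording in the writeup is that the inclusion $s \in S$ implies $k \leq \kappa(s) - (m-s-1) \leq \kappa(s)$, which both shows that each $\comp_k(s)$ with $s\in\{0,\ldots,r-2\}$ is a genuine component (via Corollary~\ref{cor:comp}) and ensures that condition~\eqref{eq:smootha} of Theorem~\ref{thm:ts} fails throughout this range, placing us squarely in the second alternative of Theorem~\ref{thm:connecteddet} rather than the disconnected case.
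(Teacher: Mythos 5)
Your proposal is correct and follows essentially the same route as the paper: both arguments observe that the stated bound is the global minimum of the quadratic $\kappa(s)-(m-s-1)=s^2+s(n-m-r+2)+m(r-2)$, so the hypothesis forces $k\le\kappa(s)-(m-s-1)$ for every $s$, making the set $S$ from the proof of Theorem~\ref{thm:connecteddet} equal to $\{0,\ldots,r-2\}$ and yielding a connected chain of compression components that, by Remark~\ref{rem:intersect}, meets every component of $\fd$. Your added remark that $s\in S$ forces \eqref{eq:smootha} to fail (so one is in the second alternative of Theorem~\ref{thm:connecteddet}) is a correct and harmless elaboration of what the paper leaves implicit.
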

\begin{proof}
  If we pick $k$ such that $k\le \kappa(s)-(m\!-\!s\!-\!1)$ for all $s$, then we would ensure that 
  the set $S$ in the proof of Theorem~\ref{thm:connecteddet} is $\{0,\ldots,r-2\}$ and hence $\fd$ is connected.
Now,
  $$\kappa(s)-(m\!-\!s\!-\!1) = s^2 + s(n-m-r+2) + m(r-2)$$
is minimized when $s=-(n-m-r+2)/2$, when it takes on value 
$$m(r-2)-\frac{ ( (n-m)-(r-2))^2}{4}.$$
\end{proof}

\subsection{Smoothness and connectedness for $\bF_k(\pmn)$}

We can now determine exactly when $\fp$ is smooth, as stated in Theorem~\ref{thm:smoothp}.
{
\renewcommand{\thetheorem}{\ref{thm:smoothp}}
\begin{theorem}
  Let $k\geq 1$.
The Fano scheme $\bF_k(\pmn)$ is smooth if and only if $n=2$ or if $k> (r-2)n+1.$
\end{theorem}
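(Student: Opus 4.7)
The plan begins by disposing of the $n=2$ case. The hypothesis $r\le m\le n=2$ forces $r=m=n=2$, and $P^2_{2,2}=V(x_{11}x_{22}+x_{12}x_{21})$ is a smooth quadric surface in $\PP^3$. Its Fano scheme of lines $\bF_1(P^2_{2,2})$ is the classical disjoint union of two $\PP^1$'s (one from each ruling), so is smooth. From here on I assume $n\ge 3$.

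For the forward direction ($k>(r-2)n+1 \Rightarrow$ smoothness), my plan is to combine Theorem~\ref{thm:tsperm}(ii) with Proposition~\ref{prop:perm} at every compression subscheme and then appeal to torus invariance. The first step is to verify that conditions \eqref{eq:smoothap} and \eqref{eq:smoothbp} hold for every $s\in\{0,\ldots,r-1\}$ with $k\le\kappa(s)$. Both expressions $\kappa(s)-(m-s-2)$ and $\kappa(s)-(n-r+s-1)$ are convex quadratics in $s$; evaluating at the endpoints of the relevant ranges and using $m\le n$ yields the uniform upper bound $(r-2)n+1$, so the inequality $k>(r-2)n+1$ suffices. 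One also checks that when $k\le\kappa(s)$ with $1\le s\le r-2$, the additional hypotheses of Proposition~\ref{prop:perm}---namely $k\ge 5$ together with the size bounds $s+1+n-r\ge 3$ and $m-s\ge 3$---are automatic on this range; similarly $k\ge 2$ when $s\in\{0,r-1\}$.

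With these inequalities in place, Proposition~\ref{prop:perm} identifies each $\comp_k(\sigma,\tau)$ as an irreducible component of $\fp$, and Theorem~\ref{thm:tsperm}(ii) gives the pointwise bound $\dim T_\eta\fp\le(k+1)(\kappa(s)-k)=\dim\comp_k(\sigma,\tau)$ for every $\eta\in\comp_k(\sigma,\tau)$, making each compression component smooth at every point. Proposition~\ref{prop:fixed} places every torus fixed point of $\fp$ inside some $\comp_k(\sigma,\tau)$, so every torus fixed point is smooth in $\fp$. Since $\fp$ is projective and its singular locus is closed and $(T_m\times T_n)$-invariant, it would have to contain a torus fixed point if nonempty; the resulting contradiction gives smoothness.

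For the converse, assume $k\le(r-2)n+1$ and $n\ge 3$; the plan is to exhibit a singular torus fixed point. When $k\le(r-2)n$, I would directly mimic the determinantal proof of Theorem~\ref{thm:smooth}(i): take a $(k+1)$-plane whose nonzero positions are all of rows $m-r+3,\ldots,m$ together with a single extra entry at some $(i,j)$ with $i\le m-r+2$; this plane lies simultaneously in a standard compression space of type $s=r-1$ (with $\tau=\{i\}\cup\{m-r+3,\ldots,m\}$) and one of type $s=r-2$ (with $\sigma=\{1,\ldots,n\}\setminus\{j\}$), yielding two distinct components through the point via Propositions~\ref{prop:perm} and~\ref{prop:distinct}. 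The hard part will be the threshold case $k=(r-2)n+1$: a simple intersection argument fails because two standard permanental compression spaces never intersect in dimension $(r-2)n+2$. I would handle this case by a direct application of Lemma~\ref{lemma:abc} to a carefully chosen torus fixed point in a single $s=r-1$ compression component whose lower $(r-1)\times n$ block carries a special zero pattern (for instance, one that forces the tridiagonal structure hypothesis of Theorem~\ref{thm:tsperm}(ii) to fail), producing a nontrivial space of admissible matrices $A$ in Lemma~\ref{lemma:abc}. This gives $\dim T_\eta\fp$ strictly greater than the dimension of the unique compression component through $\eta$, hence singularity; the same technique also handles the remaining small-parameter exceptions where Proposition~\ref{prop:perm} does not directly apply.
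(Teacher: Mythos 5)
Your forward direction ($n\ge 3$, $k>(r-2)n+1$) is essentially the paper's proof and is fine. The converse, however, has genuine gaps. For $2\le k\le (r-2)n$ you claim the chosen fixed point lies on \emph{two distinct irreducible components} ``via Propositions~\ref{prop:perm} and~\ref{prop:distinct}.'' But Proposition~\ref{prop:perm} only certifies the $s=r-2$ subscheme $\comp_k(\sigma,\tau)$ as a component under the hypotheses $k\ge 5$, $s+1+n-r=n-1\ge 3$ and $m-s=m-r+2\ge 3$; the last condition fails whenever $m=r$ --- in particular in the hypersurface case $r=m=n$, which is exactly the range the theorem is most needed for --- and the others fail for $n=3$ and for $k\le 4$. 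Lying on two distinct compression \emph{subschemes}, only one of which is known to be a component, does not by itself give singularity (both could sit inside one component through that point), and Proposition~\ref{prop:distinct} gives distinctness, not the non-containment you would need to run a ``smoothness forces $\comp_k(\sigma,\tau)\subseteq\comp_k(\sigma',\tau')$, contradiction'' argument. Such a patch is possible, but you have not supplied it. Your threshold case $k=(r-2)n+1$ is only a gesture (``a carefully chosen torus fixed point \ldots producing a nontrivial space of admissible matrices $A$''); the paper makes this concrete, and in fact its single computation --- a point of an $s=r-1$ compression component whose $(m-r+2)$nd row is $(0,\ldots,0,z_0,z_1)$, perturbed by inserting $(\varepsilon z_0,-\varepsilon z_1)$ into the zero block --- gives an extra tangent vector for the entire range $2\le k\le (r-2)n+1$, making your case split unnecessary.

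More seriously, the case $k=1$ is not addressed at all, and it cannot be absorbed into your ``$\dim T_\eta\fp$ exceeds the dimension of the unique compression component through $\eta$'' criterion: for $k=1$ the subschemes $\comp_1(\sigma,\tau)$ are \emph{not} irreducible components of $\bF_1(\pmn)$ (they lie inside much larger components of dimension roughly $\delta(s)+2(\kappa(s)-1)$, cf.\ Theorem~\ref{thm:lines} and Remark~\ref{rem:expp}), so a surplus over $\dim\comp_1(\sigma,\tau)$ proves nothing. Note also that when $r=2$ and $n\ge 3$ the whole converse reduces to $k=1$. The paper handles $k=1$ with two separate arguments: for $r>2$, a semicontinuity comparison of tangent space dimensions at two points of the same irreducible subvariety (using Lemma~\ref{lemma:abc} at both), and for $r=2$, a second-order obstruction computation showing a first-order deformation does not lift --- first-order information alone does not suffice there. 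Without an argument covering $k=1$ (and without repairing the component claim above), the ``only if'' direction is not proved.
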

\addtocounter{theorem}{-1}
}

\begin{proof}
  If $n=2$, then $\bF_k(\pmn)$ is either empty or the Fano scheme of lines on a smooth quadric surface, which is the union of two disjoint copies of $\PP^1$.
  Suppose instead that $n>2$ and  $k>(r-2)n+1$.  Suppose there is an $s$-compression component $\comp_k(\sigma,\tau)$ appearing in $\fp$, i.e.~suppose $k\le\kappa(s)$.  We would like to apply Theorem \ref{thm:tsperm} to show that all the points of $\comp_k(\sigma,\tau)$  are smooth points of $\fp$.  That would mean that all torus fixed points of $\fp$ are smooth by Proposition~\ref{prop:fixed}, and so $\fp$ must be smooth.
  
First, let's check that the hypotheses of Theorem~\ref{thm:tsperm} hold for our choice of $k, r,$ and $s$.  If $r=2$, then $k\ge 2$ by our assumption on $k$, and $s=0$ or $s=r-1$ are the only possibilities. On the other hand, if $r>2$, then we have $k\geq 5$ as desired. Next, we claim that $s+1+n-r\ge 3$ if $s\ne 0$, and $m-s\ge3$ if $s\ne r-1$. The first condition could only fail if $s=1$ and $r=m=n$; but in this case $\kappa(1) =(r-2)n+1 < k$, contradicting that $k \le \kappa(1)$.  Similarly, the second condition could only fail if $s=r-2$ and $r=m$, but in this case $\kappa(r-2) = (r-2)n+1 < k$, again a contradiction.

Next, we always have that for $0\leq s \leq r-2$, $k> \kappa(s)-(m-s-2)$. Indeed, the function $\kappa(s)-(m-s-2)$ is convex as a function of $s$, and yields $(r-2)n+1$ and $(r-2)m+1$ when evaluated at $s=r-2$ and $s=0$, respectively. Furthermore, since 
$$\kappa(s)-(n-r+s-1)=\kappa(s\!-\!1)-(m-(s\!-\!1)-2),$$
it follows that conditions \eqref{eq:smoothap} and \eqref{eq:smoothbp} of Theorem \ref{thm:tsperm} hold for all $0\leq s \leq r-1$. 

Hence, we can apply Theorem \ref{thm:tsperm} to all compression components appearing in $\fp$. 
By Theorem \ref{thm:tsperm}, the tangent space dimension at any torus fixed point of $\fp$ which is an $s$-compression space is bounded above by $(k+1)(\kappa(s)-k)$. But by  Proposition \ref{prop:perm}, this point is in a compression space component of exactly this dimension. We conclude that all torus fixed points of $\fp$ are smooth, so $\fp$ must be smooth.

Suppose instead that $k\leq (r-2)n+1$ and $n\neq 2$; we must show that $\fp$ is singular. First, let us assume that $k>1$. Then by Proposition \ref{prop:perm}, $\comp_k(r-1)$ is an irreducible component and has dimension $(k+1)(\kappa(r-1)-k)$. Consider a point $P$ of $\comp_k(r-1)$ corresponding to an $m\times n$ matrix whose first $m-r+2$ rows have the form

\begin{equation}\label{matrix:singular}
\left(\begin{array}{c c c c c}
  0&\cdots& 0 &0 &0\\
  \vdots& \ddots & \vdots & \vdots &\vdots\\
  0 & \cdots & 0 & 0& 0\\
  0 & \cdots & 0 & z_0& z_1\\
\end{array}\right),
\end{equation}
  that is, the first $m-r+1$ rows are zero, and in the $(m-r+2)^{nd}$ row, all entries vanish except the two on the right. Now, perturbing the relations among the entries of $P$ not in the first $m-r+1$ rows gives $(k+1)(\kappa(r-1)-k)$ tangent directions as in the proof of Lemma \ref{lemma:abc}. However, we may also perturb $P$ by changing its first $m-r+2$ rows to
$$
\left(\begin{array}{c c c c c}
  0&\cdots& 0 &0 &0\\
  \vdots& \ddots & \vdots & \vdots &\vdots\\
  0 & \cdots & 0 & \varepsilon z_0& -\varepsilon z_1\\
  0 & \cdots & 0 & z_0& z_1\\
\end{array}\right)
  $$
to get an additional tangent vector. Hence $\fp$ 
is not smooth at this point.

To conclude, we must deal with the case $k=1$, still supposing that $n\neq 2$ and $k\leq (r-2)n+1$. 
First, suppose that $r>2$ and consider the point $P\in\comp_1(r-1)$ corresponding to the 
$m\times n$ matrix whose bottom row is $(\begin{array}{c c c c c}z_0&z_1&0&\cdots & 0\end{array})$, with $0$ entries everywhere else. Then by Lemma \ref{lemma:abc}, the dimension of the tangent space to $\fp$ at $P$ is $2(mn-2)$. Consider instead the point 
$P'\in\comp_1(r-1)$ corresponding to the 
$m\times n$ matrix 
$$
\left(\begin{array}{c}
A\\
C
\end{array}\right)
$$
with $A$ an $(m-r+1)\times n$ block of zeroes and $C=C(z_0,z_1)$ (for $s=r-1$), see \eqref{eq:bc}.
A straightforward calculation with Lemma \ref{lemma:abc} shows that $\dim T_{P'} \fp<2(mn-2)$. Hence, $\fp$ must be singular at $P$.

Finally, we now assume that  $k=1$ and $r=2$. Let $P\in\fp$ correspond to the $m\times n$ matrix of \eqref{matrix:singular}. Now, there is a first-order deformation of this linear space given by perturbing  $P$ to 
$$
\left(\begin{array}{c c c c c}
  0&\cdots& 0 &0 &0\\
  \vdots& \ddots & \vdots & \vdots &\vdots\\
  0 & \cdots & 0 & \varepsilon z_0& -\varepsilon z_1\\
  0 & \cdots & \varepsilon z_0 & z_0& z_1\\
\end{array}\right).
  $$
This cannot be lifted to second order:
there are obstruction terms $\epsilon^2z_0^2$ and $-\epsilon^2z_0z_1$ which cannot simultaneously be canceled out.
Indeed, the only possibility for canceling either obstruction is by including an order two term at the upper left of the lower $2\times 3$ block of the above matrix, and these terms do not agree.  
Hence, $\bF_1(\pmn)$ is not smooth at $P$.
\end{proof}

\begin{cor}\label{cor:permk}
  If $k> (r-2)n+1$, then all components of $\bF_k(\pmn)$ are of the form $\comp_k(\sigma,\tau)$.
  \end{cor}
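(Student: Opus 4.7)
The plan is to combine three previously established facts: Theorem \ref{thm:smoothp} (smoothness of $\fp$), Proposition \ref{prop:perm} (each $\comp_k(\sigma,\tau)$ is an irreducible component), and Remark \ref{rem:intersect} (every component meets some $\comp_k(\sigma,\tau)$). Once all three apply, the conclusion is a quick formal deduction.

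First, I would dispose of smoothness. The $n=2$ clause in Theorem \ref{thm:smoothp} is vacuous under our hypothesis, since $k > (r-2)n+1 = 2r-3$ combined with $n=2$ forces $k \ge 2r-2 = (r-1)n$, so $\fp$ is empty. In all other cases the second clause of Theorem \ref{thm:smoothp} gives smoothness directly.

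Second, I would verify that whenever $k \le \kappa(s)$, the assumptions of Proposition \ref{prop:perm} hold. The required lower bounds on $k$ follow from $k > (r-2)n+1$: for $1 \le s \le r-2$ one has $r \ge 3$ hence $n \ge 3$, so $k \ge (r-2)n+2 \ge 5$; the cases $s=0,\,r-1$ only need $k \ge 2$. The shape restrictions $s+1+n-r \ge 3$ when $s \ne 0$ and $m-s \ge 3$ when $s \ne r-1$ hold in every case that actually occurs. The only potential exceptions are $m=2$ (forcing $r=2$), $s=1$ with $r=n$, and $s=r-2$ with $r=m$; in each one a direct computation of $\kappa(s)$ from \eqref{eqn:kappa} yields $\kappa(s) \le (r-2)n+1 < k$, so no such $\comp_k(\sigma,\tau)$ exists to begin with. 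Hence every compression subscheme $\comp_k(\sigma,\tau)$ sitting inside $\fp$ is an irreducible component of $\fp$.

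Finally, let $Z$ be any irreducible component of $\fp$. By Remark \ref{rem:intersect}, $Z$ intersects some $\comp_k(\sigma,\tau)$, which by the previous step is itself an irreducible component of $\fp$. But a smooth scheme has pairwise disjoint irreducible components (two meeting at a point would give a non-domain local ring there, contradicting regularity), so $Z = \comp_k(\sigma,\tau)$.

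The main obstacle is the bookkeeping in the middle step: one has to confirm that the shape hypotheses of Proposition \ref{prop:perm} really are automatic in the relevant range, rather than excluding honest cases. The saving observation is that every apparent exception sits outside the range $k \le \kappa(s)$ under our bound $k > (r-2)n+1$, so Proposition \ref{prop:perm} is applicable exactly when it is needed.
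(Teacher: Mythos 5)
Your proof is correct and follows essentially the same route as the paper: smoothness from Theorem \ref{thm:smoothp}, the verification (which the paper carries out inside the proof of Theorem \ref{thm:smoothp} and merely cites in the corollary) that every compression subscheme with $k\le\kappa(s)$ satisfies the hypotheses of Proposition \ref{prop:perm} and is hence an irreducible component, and then Remark \ref{rem:intersect} plus disjointness of irreducible components of a smooth scheme. Your explicit bookkeeping of the degenerate shapes ($m=2$, $s=1$ with $r=n$, $s=r-2$ with $r=m$) matches the paper's computation that $\kappa(s)=(r-2)n+1<k$ in those cases, so there is no gap.
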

\begin{proof}
  If $k>(r-2)n+1$, then by Theorem \ref{thm:smoothp}, $\fp$ is smooth. By the arguments in the proof of Theorem \ref{thm:smoothp} above, each $\comp_k(\sigma,\tau)$ is actually an irreducible component. Since any irreducible component of $\fp$ intersects some $\comp_k(\sigma,\tau)$, all components must have this form.
\end{proof}

Our result on the connectedness of $\fp$ is similar to the case for $\fd$:
\begin{theorem}\label{thm:connectedp}
  Suppose that $1\leq k <(r-1)n$. 
\begin{enumerate}
	\item \label{item:dconp}  
	Suppose there is some $s=0,\ldots,r-1$ satisfying the conditions of Theorem~\ref{thm:tsperm} and satisfying~\eqref{eq:smoothap} and~\eqref{eq:smoothbp} as well.  Then $\fp$ is disconnected.

  \item \label{item:conp}Conversely, $\bF_k(\pmn)$ is connected if $k\leq \max\{\kappa(0),\kappa(r-2)\}$ and if:
  \begin{enumerate}
	  \item  For each integer $s$ with $0< s<r-1$ satisfying $k\leq \kappa(s)$, we have 
	    $k\leq \kappa(s)-\min\{m-s-1,n-r+s\};$ and
\item  If $k\leq \kappa(0)$, then $k\leq \kappa(0)-(m-r+1)(r-1)$. 
\end{enumerate}
\end{enumerate}
\end{theorem}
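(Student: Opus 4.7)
Suppose $s$ satisfies the hypotheses of Theorem~\ref{thm:tsperm} together with \eqref{eq:smoothap}--\eqref{eq:smoothbp}, and fix any pair $(\sigma,\tau)$ with $|\tau|=s$. By Proposition~\ref{prop:perm}, $\comp_k(\sigma,\tau)$ is an irreducible component of $\fp$ of dimension $(k+1)(\kappa(s)-k)$; by Theorem~\ref{thm:tsperm}(ii), this value also bounds the tangent space dimension of $\fp$ at every point of $\comp_k(\sigma,\tau)$. Hence $\comp_k(\sigma,\tau)$ is smooth, and so is itself a connected component of $\fp$. The assumption $k<(r-1)n$ gives $k\le\kappa(r-1)$, so $\fp$ has at least one further standard compression component---either at a different level, or, if $s=r-1$, at another pair $(\sigma',\tau')$ with $|\tau'|=r-1$ (using $\binom{m}{r-1}\ge 2$). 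By Proposition~\ref{prop:distinct}(ii) this further component is distinct from $\comp_k(\sigma,\tau)$, so $\fp$ is disconnected.

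\textbf{Part~(2).} By Remark~\ref{rem:intersect}, every irreducible component of $\fp$ meets some compression subscheme, so it suffices to show that $Y:=\bigcup_{(\sigma,\tau)}\comp_k(\sigma,\tau)$, taken over valid pairs $(\sigma,\tau)$ with $k\le\kappa(|\tau|)$, is connected. My plan is to form the intersection graph $\Gamma$ on these valid pairs, with $(\sigma,\tau)\sim(\sigma',\tau')$ iff $\comp_k(\sigma,\tau)\cap\comp_k(\sigma',\tau')\neq\emptyset$; connectedness of $\Gamma$ implies connectedness of $Y$, and hence of $\fp$.

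The key edge computations are these. For $\sigma\subsetneq\sigma'$, $\tau\subsetneq\tau'$ with $|\sigma'|-|\sigma|=|\tau'|-|\tau|=1$ and $|\tau|=s$, the union of the two zero patterns has size $(m-s)(s+1+n-r)+(m-s-1)$, so the projective dimension of the intersection of the two ambient compression spaces is $\kappa(s)-(m-s-1)$. The pairs are therefore adjacent in $\Gamma$ iff $k\le\kappa(s)-(m-s-1)$, equivalently $k\le\kappa(s+1)-(n-r+s+1)$. A parallel computation for the level-$0$ pair $(\{1,\dots,n-r+1\},\emptyset)$ against the level-$(r-1)$ pair $(\{1,\dots,n\},\{1,\dots,r-1\})$ gives an intersection supported on an $(r-1)\times(r-1)$ block of projective dimension $(r-1)^2-1=\kappa(0)-(m-r+1)(r-1)$; condition~(b) therefore supplies a direct edge between these pairs, which by the $S_m\times S_n$-action on $\Gamma$ extends to a complete bipartite family of edges between all level-$0$ and all level-$(r-1)$ pairs. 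Finally, a ``swap trick''---going up from $(\sigma,\tau)$ to $(\sigma\cup\{j\},\tau\cup\{i\})$ and back down to $((\sigma\cup\{j\})\setminus\{j'\},(\tau\cup\{i\})\setminus\{i'\})$---shows that whenever one vertical edge at level $s$ is present, all pairs at levels $s$ and $s+1$ lie in a single connected component of $\Gamma$.

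It then remains to assemble the pieces. Since $\kappa$ is convex, $S:=\{s:k\le\kappa(s)\}$ is either a single consecutive interval or the disjoint union $\{0,\dots,a\}\sqcup\{b,\dots,r-1\}$; the hypothesis $k\le\max\{\kappa(0),\kappa(r-2)\}$ rules out the degenerate case $S=\{r-1\}$. Condition~(a) at each intermediate $s\in S$ supplies at least one of the two vertical edges from $s$. A short analysis tracking where the expression $\min(m-s-1,n-r+s)$ switches (at $s=(m-n+r-1)/2$) shows that, within each sub-interval of $S$, consecutive levels are joined by a vertical edge; and when $S$ splits into two sub-intervals, condition~(b) bridges them via the complete bipartite family described above. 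Same-level connectivity at each level of $S$ then follows from the swap trick at interior levels, and from the condition~(b) bipartite edges at levels~$0$ and~$r-1$ when no vertical edge is available there. The main obstacle is the combinatorial case analysis that ensures no gap survives in $\Gamma$: this is where the convexity of $\kappa$ and the monotonicity of the vertical-edge bounds in $s$ are essential, together with the observation that whenever the ``switch point'' $(m-n+r-1)/2$ is an integer, condition~(a) at that level gives both vertical edges simultaneously, while when it is a half-integer, any potentially missing edge lies in the low-$\kappa$ valley excluded from $S$.
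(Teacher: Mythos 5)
Your part~(1) is correct and is essentially the paper's argument (the paper simply observes that \emph{every} pair $(\sigma,\tau)$ at level $s$ gives a smooth irreducible, hence connected, component, and there are at least two such pairs). Your edge computations in part~(2) are also correct: the nested adjacent-level intersection has projective dimension $\kappa(s)-(m-s-1)$, the level-$0$/level-$(r-1)$ intersection has dimension $(r-1)^2-1=\kappa(0)-(m-r+1)(r-1)$, and the swap trick plus Remark~\ref{rem:intersect} correctly reduce everything to connectivity of your graph $\Gamma$. This is the same framework the paper uses (it compresses the walk into ``as in the proof of Theorem~\ref{thm:connecteddet}'' and then performs exactly your two bridging steps).

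The gap is in the assembly step. The claim that ``within each sub-interval of $S$, consecutive levels are joined by a vertical edge'' is false, and so is the claim that any missing edge lies in the low-$\kappa$ valley excluded from $S$; consequently your use of condition~(b) only ``when $S$ splits into two sub-intervals'' does not cover all cases. Concretely, take $r=m=n=4$ and $k=8$: then $\kappa(0)=\kappa(3)=11$, $\kappa(1)=\kappa(2)=9$, so $S=\{0,1,2,3\}$ is a single interval and all hypotheses of part~(2) hold ($\kappa(1)-\min\{2,1\}=\kappa(2)-\min\{1,2\}=8\ge k$ and $\kappa(0)-3=8\ge k$); but the vertical edge between the two \emph{present} levels $1$ and $2$ requires $k\le\kappa(1)-(m-2)=7$, so it is absent, and one checks that no longer-range or same-level intersection is big enough either. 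Here $\Gamma$ is connected only because condition~(b) supplies the level-$0$--level-$3$ bipartite edges, even though $S$ never splits. The correct completion runs as follows: if a vertical edge is missing at some level $s_0$ with both $s_0$ and $s_0+1$ present (and $0<s_0<r-2$, the edge at $r-2$ being forced by~(a) once $r\ge3$), then condition~(a) applied at $s_0$ and at $s_0+1$ forces the edges at $s_0-1$ and $s_0+1$ to be present, so by convexity of $g(s):=\kappa(s)-(m-s-1)$ the set $\{g<k\}$ is exactly $\{s_0\}$ with $s_0=(m+r-n-2)/2$; but then $\kappa(s_0)=\kappa(s_0+1)=\min_s\kappa(s)\ge k$, so \emph{all} levels are present, in particular $k\le\kappa(0)$, and condition~(b) bridges the two clusters $\{0,\dots,s_0\}$ and $\{s_0+1,\dots,r-1\}$ through the complete bipartite level-$0$/level-$(r-1)$ edges. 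Without an argument of this kind (which is what the paper's appeal to the proof of Theorem~\ref{thm:connecteddet}, together with its two explicit bridges, amounts to), your case analysis does not establish that no gap survives in $\Gamma$.
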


\begin{proof}[Proof of Theorem \ref{thm:connectedp}]
	If the hypotheses in (\ref{item:dconp}) are met, then by Theorem \ref{thm:tsperm} and Proposition \ref{prop:perm}, $\comp_k(\sigma,\tau)$ is an irreducible component whose points are all smooth points of $\fp$, for any $\sigma\subset\{1,\ldots,n\}$, $\tau\subset\{1,\ldots,m\}$ with $|\tau|=s$ and $|\sigma|=s+1+n-r$. Each of these irreducible components $\comp_k(\sigma,\tau)$ is thus a connected component of $\fp$, so this Fano scheme is not connected.
	
	Suppose on the other hand that the hypotheses in (\ref{item:conp}) hold.
  Then as in the proof of Theorem \ref{thm:connecteddet}, 
  we may connect any irreducible component of $\fp$ to some $\comp_k(\sigma,\tau)$ where $|\tau|=r-1$. Hence, to show connectedness, we must
  simply connect all subschemes of the form $\comp_k(\sigma,\tau)$ with $|\tau|=r-1$.

  Now, if $k\leq \kappa(0)$, then we may consider the subscheme $\comp_k(\{1,\ldots,n-r+1\},\{\})$. By assumption (2b), this clearly intersects every subscheme of the form $\comp_k(\sigma,\tau)$, where $|\tau|=r-1$. Likewise, if $k\leq \kappa(r-2)$, we may assume $r>2$ for otherwise we are done by the previous case.  Then by assumption (2a), we have $$k\le \kappa(r-2)-\min\{m-r+1,n-2\}\le \kappa(r-2)-(m-r+1).$$  Then we may connect subschemes of the form $\comp_k(\sigma,\tau)$, where $|\tau|=r-1$, via subschemes of the form $\comp_k(\sigma',\tau')$, where $|\tau'|=r-2$. Hence, in both cases, $\bF_k(\pmn)$ is connected.
\end{proof}
\begin{cor}\label{cor:connectedp}
If $$k\leq m(r-2)-\frac{ ( (n-m)-(r-2))^2}{4},$$
then the Fano scheme $\fp$ is connected.
\end{cor}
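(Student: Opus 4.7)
The plan is to parallel the proof of Corollary \ref{cor:connected} as closely as possible, and then invoke the ``$|\tau'|=r-2$ connecting route'' inside the proof of Theorem \ref{thm:connectedp}(2). First I would dispose of the case $r=2$: the hypothesis then reduces to $k \le -(n-m)^2/4$, which is incompatible with $k \ge 1$, so the statement is vacuous. Hereafter assume $r \ge 3$.

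Exactly as in Corollary \ref{cor:connected}, the function $\kappa(s) - (m-s-1) = s^2 + s(n-m-r+2) + m(r-2)$ is a convex quadratic in $s$ whose minimum value is $m(r-2) - ((n-m)-(r-2))^2/4$. Hence the hypothesis implies
\[
k \le \kappa(s) - (m-s-1) \qquad \text{for every integer } s,
\]
in particular for $s=0,1,\ldots,r-1$.

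From this I would read off the three ingredients needed to run the second connecting route inside the proof of Theorem \ref{thm:connectedp}(2): (i) condition (a) of that theorem, since for each $0 < s < r-1$ with $k \le \kappa(s)$ one has $k \le \kappa(s) - (m-s-1) \le \kappa(s) - \min\{m-s-1,\, n-r+s\}$; (ii) the bound $k \le \kappa(r-2)$, since the $s=0$ case gives $k \le m(r-2)$ and a direct calculation shows $m(r-2) \le \kappa(r-2)$ whenever $2 \le r \le m \le n$; and (iii) the bound $k \le \kappa(r-2) - (m-r+1)$, which is just the $s=r-2$ case.

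Equipped with (i)--(iii), I would then mimic the second half of the proof of Theorem \ref{thm:connectedp}(2): condition (a) together with Remark \ref{rem:intersect} connects any irreducible component of $\fp$ to some $\comp_k(\sigma,\tau)$ with $|\tau|=r-1$, and the bound in (iii) connects all such $|\tau|=r-1$ subschemes to one another via intersecting subschemes $\comp_k(\sigma',\tau')$ with $|\tau'|=r-2$. The one real subtlety, rather than an obstacle, is that condition (b) of Theorem \ref{thm:connectedp}(2) need not hold under our hypothesis (for instance when $m > r$ and $n-m$ is close to $r-2$, the quantity $(r-1)^2-1$ appearing in (b) can be strictly smaller than $m(r-2)$), so one cannot cite Theorem \ref{thm:connectedp}(2) as a black box --- one must reach inside its proof and select the $|\tau'|=r-2$ connecting route rather than the $\comp_k(\{1,\ldots,n-r+1\},\{\})$ one. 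Once this is noted, the argument is routine given the quadratic minimization already carried out in the determinantal case.
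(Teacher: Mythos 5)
Your proof is correct and takes essentially the same route as the paper's: the identical convexity/minimization computation gives $k\leq \kappa(s)-(m-s-1)$ for all $s$, and connectedness then follows by linking the compression subschemes $\comp_k(\sigma,\tau)$ across adjacent values of $|\tau|$ at torus fixed points, combined with Remark~\ref{rem:intersect}. The paper performs this linking at every level $s=0,\ldots,r-2$ rather than funneling everything up to $|\tau|=r-1$ and then connecting those via $|\tau'|=r-2$; this organizational difference, together with your (correct) observation that Theorem~\ref{thm:connectedp}(2) cannot be invoked as a black box because condition (b) may fail, does not change the substance of the argument.
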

\begin{proof}
Similarly to the proof of of Corollary~\ref{cor:connected}, the bound on $k$ implies that $k \le \kappa(s)-(m-s-1)$ for all $s$.  Thus all compression components $C_k(\sigma,\tau)$ appear.  Further, just as in the proof of Theorem~\ref{thm:connectedp}, for each $s=0,\ldots,r-2$, the components of the form $\comp_k(\sigma,\tau)$ with $|\tau|=s+1$ are connected via the components of the form $\comp_k(\sigma',\tau')$ with $|\tau'| = s$.  
\end{proof}

\section{Fano schemes of lines}\label{sec:lines}
To start out this section, we will prove Theorem \ref{thm:lines}, giving a complete description of the components of $\bF_1(\dmn)$, expanding on \cite[Corollary 2.2]{eisenbud:88a}.

{
\renewcommand{\thetheorem}{\ref{thm:lines}}
\begin{theorem}
  The Fano scheme $\bF_1(\dmn)$ has  exactly $r$ irreducible components, of dimensions
  $$
\delta(s)+2(\kappa(s)-1) \qquad \text{for }0\leq s \leq r-1.
  $$
In particular, if $m=n$, then each irreducible component of $\bF_1(D_{n,n}^r)$ has dimension $(n-r)(r-2)+2nr-n-5.$  If $r>2$, then all components intersect pairwise.  Furthermore, if $r = m=n$, then $\bF_1(D_{n,n}^n)$ is a reduced local complete intersection.
\end{theorem}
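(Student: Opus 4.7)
By Corollary~\ref{cor:comp} and Proposition~\ref{prop:distinct}, each $\comp_1(s)$ for $0\le s \le r-1$ is already known to be an irreducible component of $\bF_1(\dmn)$ of the stated dimension $\delta(s)+2(\kappa(s)-1)$, and these $r$ components are pairwise distinct. The first and central task is to show these are \emph{all} the components, i.e.~that every line in $\dmn$ is contained in some $s$-compression space; this is a natural generalization of \cite[Corollary~2.2]{eisenbud:88a}. The plan is to apply the Kronecker normal form to a pencil $A+tB$ of $m\times n$ matrices of rank at most $r-1$: that normal form decomposes the pencil into a direct sum of regular and singular blocks, and the rank bound forces enough singular blocks to be present that, after a $\GL_m\times\GL_n$ change of coordinates, the pencil sits inside a standard $s$-compression space for an appropriate $s$.

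The dimension identity when $m=n$ is then a routine expansion. Writing $\delta(s)+2(\kappa(s)-1)$ as a polynomial in $s$ and applying the identity $(r-s-1)^2=(r-1)^2-2s(r-1)+s^2$, the $s$-dependent terms all cancel and the expression collapses to $(n-r)(r-2)+2nr-n-5$, independent of $s$.

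For pairwise intersection when $r>2$, I would exhibit a single line lying in every $\comp_1(s)$ simultaneously, namely the line with exactly two nonzero entries, located in positions $(1,1)$ and $(1,2)$. Since $r\ge 3$, the required $(m-s)\times(s+1+n-r)$ block of zeros can always be placed away from those two entries: when $s=0$, choose the $m\times(n-r+1)$ block in columns $\{r,\ldots,n\}$, which avoids columns $1$ and $2$ because $n-r+1\le n-2$; when $s\ge 1$, choose the block in rows $\{s+1,\ldots,m\}$, which avoids row~$1$.

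Finally, for $r=m=n$, the scheme $D_{n,n}^n\subset \PP^{n^2-1}$ is a hypersurface of degree $n$, so $\bF_1(D_{n,n}^n)$ is cut out in $\Gr(2,n^2)$ by the $n+1$ coefficients of $\det(A+tB)\in\KK[t]$. The expected codimension is therefore $n+1$, and the expected dimension $2(n^2-2)-(n+1)=2n^2-n-5$ agrees with the dimension of every $\comp_1(s)$ computed above. Hence $\bF_1(D_{n,n}^n)$ is pure of the expected dimension, which gives the local complete intersection property and, in particular, Cohen--Macaulayness. Reducedness then follows from Serre's criterion once generic reducedness on each component is verified: by Theorem~\ref{thm:ts}(i), a general $\eta\in\comp_1(s)$ satisfies $\dim T_\eta \bF_1(D_{n,n}^n)\le \delta(s)+2(\kappa(s)-1)=\dim\comp_1(s)$, so the scheme is smooth, and hence reduced, at such an $\eta$. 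The main obstacle throughout is the very first step: Theorem~\ref{thm:ts}(ii) does not apply for $k=1$, so ruling out non-compression components of $\bF_1(\dmn)$ cannot be achieved via tangent space bounds alone and genuinely requires the Kronecker--Weierstrass classification of matrix pencils.
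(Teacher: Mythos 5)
Your proposal is correct and follows essentially the same route as the paper: the Kronecker canonical form to show every line lies in a compression space, Corollary~\ref{cor:comp} and Proposition~\ref{prop:distinct} to get exactly $r$ components of the stated dimensions, and the same expected-codimension argument (your $n+1$ coefficient equations of $\det(A+tB)$ are precisely a section of the rank-$(n+1)$ bundle $\Sym^n S^*$ used in the paper) combined with generic smoothness from Theorem~\ref{thm:ts}(i) for the reduced local complete intersection claim. The only cosmetic difference is your pairwise-intersection witness (the line spanned by the $(1,1)$ and $(1,2)$ coordinate matrices, which indeed lies in every $\comp_1(s)$ when $r>2$) in place of the paper's torus fixed point contained in two overlapping standard compression spaces; both are valid.
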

\addtocounter{theorem}{-1}
}

\begin{rem}
To clarify, we are claiming in Theorem~\ref{thm:lines} that $\bF_1(\dmn)$ has exactly $r$ {\em minimal} primary components; there may well be embedded components in addition to these.  On the other hand, in the hypersurface case $r=m=n$, our theorem implies that $\bF_1(\dmn)$ is reduced, so in that case there can't be any embedded components.
\end{rem}
\begin{proof}[Proof of Theorem \ref{thm:lines}]
  By considering the Kronecker canonical form of a pencil of matrices \cite[XII.4]{gantmacher}, it follows that every point of $\bF_1(\dmn)$ is contained in a subscheme of the form $\comp_1(s)$ for $s=0,\ldots,r-1$. These form exactly $r$ distinct  components of $\bF_1(\dmn)$ of the desired dimension  by Corollary \ref{cor:comp} and Proposition \ref{prop:distinct}. 
  In particular, if $m=n$, then each of the $r$ components has dimension $\delta(s)+2(\kappa(s)-1) = (n-r)(r-2)+2nr-n-5.$
  It is also easy to see that unless $r=2$, any two components $\comp_1(s)$ and $\comp_1(s')$ intersect at a torus fixed point that has all zero entries in its upper left $(m-s)\times (s+1+n-r)$ block as well as in its upper left $(m-s')\times (s'+1+n-r)$ block.
  
  If  $m=n=r$, then $\dmn$ is a hypersurface of degree $n$, and $\bF_1(D^n_{n,n})$ is the zero locus of a global section of a rank $n+1$ vector bundle on $\Gr(2,n^2)$, see e.g. \cite[Proposition 8.4]{3264}. Moreover, each of the $n$ components $\bF_1(\dmn)$ has dimension $2n^2-n-5$, implying that $\bF_1(D^n_{n,n})$ has codimension $n+1$ in $\Gr(2,n^2)$. Therefore, $\bF_1(\dmn)$ is a local complete intersection. Furthermore, it must be reduced, since it is a local complete intersection and is generically reduced by Theorem \ref{thm:ts}.
\end{proof}
\begin{rem}\label{rem:expp}
	We would like to prove a similar result for the Fano scheme $\bF_1(\pmn)$, especially in the case $r=m=n$. A modification of the argument used to prove Theorem \ref{thm:tsperm} may be used to show that for a general point $\eta$ of any $\comp_1(\sigma,\tau)$,  
	$$\dim T_\eta\bF_1(\pmn)\leq \delta(s)+2(\kappa(s)-1).$$
  In the special case $r=m=n$, the right hand side again simplifies to $2n^2-n-5$, which, as above is the expected dimension of $\bF_1(P_{n,n}^n)$. To complete the argument that $\bF_1(P_{n,n}^n)$ has the expected dimension (and is thus a reduced local complete intersection) one would need to show that 
every irreducible component of $\bF_1(P_{n,n}^n)$ contains one of the subschemes $\comp_1(\sigma,\tau)$.

  Our computer calculations show that $\bF_1(P_{n,n}^n)$ indeed has the expected dimension for $n=3$ and $4$.  We conjecture that in fact $\bF_1(P_{n,n}^n)$ has the expected dimension for all $n$. See Conjecture \ref{conj:expected}.

\end{rem}
\begin{prop}\label{prop:deglines}
  The Fano scheme $\bF_1(D_{n,n}^n)$ has degree equal to 
  $$\int_{\Gr(2,n^2)} c_{n+1} (\Sym^n S^*)\cdot \mcL^{2n^2-n-5},$$
  where $S$ denotes the tautological rank $2$ subbundle and $\mcL$ the tautological line bundle of $\Gr(2,n^2)$. For $n\leq 6$, these values are recorded in Table \ref{table:deg}. 
\end{prop}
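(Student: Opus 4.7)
The plan is to realize $\bF_1(D_{n,n}^n)$ as the zero scheme of a regular section of a vector bundle on $\Gr(2,n^2)$, and then read off its degree using standard Chern class intersection theory.

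Recall first the classical construction: for a hypersurface $X\subset\PP^N$ cut out by a form of degree $d$, the Fano scheme of lines $\bF_1(X)$ is the zero locus of a natural section of the rank-$(d+1)$ bundle $\Sym^d S^*$ on $\Gr(2,N+1)$, where $S$ is the tautological rank-$2$ subbundle; see \cite[Proposition 8.4]{3264}. Applied to the degree-$n$ hypersurface $D_{n,n}^n\subset\PP^{n^2-1}$, this realizes $\bF_1(D_{n,n}^n)$ as the zero scheme of a section of $\Sym^n S^*$ on $\Gr(2,n^2)$, a bundle of rank $n+1$.

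By Theorem~\ref{thm:lines}, we already know $\bF_1(D_{n,n}^n)$ is a local complete intersection of codimension exactly $n+1$ in $\Gr(2,n^2)$. Since this matches the rank of the defining bundle, the section is regular, and therefore the fundamental class of $\bF_1(D_{n,n}^n)$ in the Chow ring of $\Gr(2,n^2)$ equals the top Chern class $c_{n+1}(\Sym^n S^*)$.

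The Plücker embedding is induced by $\mcL=\det S^*$, so the hyperplane class pulls back to $c_1(\mcL)$. Since $\dim\bF_1(D_{n,n}^n)=\dim\Gr(2,n^2)-(n+1)=2n^2-n-5$, the degree under the Plücker embedding is
$$
\deg\bF_1(D_{n,n}^n)=\int_{\bF_1(D_{n,n}^n)}\mcL^{2n^2-n-5}=\int_{\Gr(2,n^2)}c_{n+1}(\Sym^n S^*)\cdot\mcL^{2n^2-n-5},
$$
as claimed. The only non-formal input is the codimension bound coming from Theorem~\ref{thm:lines}; without it, excess intersection could in principle inflate the formula beyond the true degree. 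Once the formula is in hand, the numerical values in Table~\ref{table:deg} for $n\leq 6$ are obtained by an explicit Chern class computation on $\Gr(2,n^2)$, easily handled by standard Schubert calculus packages.
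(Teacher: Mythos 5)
Your proposal is correct and follows essentially the same route as the paper: Theorem~\ref{thm:lines} supplies the expected codimension $n+1$, so by \cite[Proposition 8.4]{3264} the class of $\bF_1(D_{n,n}^n)$ is $c_{n+1}(\Sym^n S^*)$, and the degree formula and Table~\ref{table:deg} follow by Schubert calculus exactly as in the paper's proof.
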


\begin{table}
 \begin{tabular}{|c| r|}
   \hline
    $n$ & Degree of $\bF_1(D_{n,n}^n)$\\
    \hline
    $2$ & $4$\\
   $3$ & $2754$\\
    $4$ & $97943936$\\
    $5$ & $91842552457500$\\
    $6$ &$1905481100678765027040$\\
\hline
\end{tabular}

\vspace{.5cm}

\caption{Degrees of Fano schemes of lines}\label{table:deg}
\end{table}

\begin{proof}
  By Theorem \ref{thm:lines},  $\bF_1(D_{n,n}^n)$ has the expected codimension of $n+1$ in $\Gr(2,n^2)$. Hence,  \cite[Proposition 8.4]{3264} implies that the Chow class of $\bF_1(D_{n,n}^n)$ in the Chow ring of $\Gr(2,n^2)$ is just $c_{n+1} (\Sym^n S^*)$, and the claim follows.

  The degrees in Table~\ref{table:deg} may now be explicitly computed using Schubert calculus. We do this using the {\tt{Schubert2}}  package of \emph{Macaulay2} \cite{M2}:
  \begin{verbatim}
loadPackage "Schubert2"
G = flagBundle({2,n^2-2});
(S,Q)=G.Bundles;
c= chern(n+1,symmetricPower(n,dual S));
L=chern_1 tautologicalLineBundle G;
integral (c*L^(2*n^2-n-5))
  \end{verbatim}
\end{proof}

\begin{rem}
  We cannot use these methods to compute the degree of $\bF_k(D_{n,n}^n)$ for $k>1$. Indeed,  the dimension of $\bF_k(D_{n,n}^n)$ is larger than the expected dimension, as a quick computation shows.
\end{rem}

\begin{rem}\label{rem:degperm}
	If one could show that $\dim \bF_1(P_{n,n}^n)=\dim \bF_1(D_{n,n}^n)$, then the above degree computations hold for the Fano scheme $\bF_1(P_{n,n}^n)$ as well. In particular, they hold for the cases $n=3$ and $4$, as in Remark \ref{rem:expp}.
\end{rem}
  
  \section{Fano schemes for $3\times 3$ matrices}\label{sec:3}
  \subsection{The Fano schemes $\bF_k(D_{3,3}^3)$}
  Using the results of the previous sections, we may glean quite a bit of information about $\bF_k(D_{3,3}^3)$, $1\leq k\leq 5$.  It is never irreducible, it is smooth if and only if $k=4$ or $5$, and it is disconnected again if and only if $k=4$ or $5$. Furthermore, the degree of $\bF_5(D_{3,3}^3)$ is $18$, and the degree of $\bF_1(D_{3,3}^3)$ is $2754$.

  We may use the results of \cite{atkinson:83a} to actually describe all non-embedded irreducible components of $\bF_k(D_{3,3}^3)$. Let $\comp^*$ be the $\GL_3\times \GL_3$-orbit closure of a general $3\times 3$ anti-symmetric matrix in $\bF_2(D_{3,3}^3)$. It follows from 
  \cite{atkinson:83a}
  that any point of $\bF_k(D_{3,3}^3)$ is either a subspace of a compression space, or in the case $k=2$, a point of $\comp^*$, see also \cite[Theorem 1.1]{eisenbud:88a}.
  
  \begin{prop}\label{prop:threetimesthree}
    The irreducible components of $\bF_k(D_{3,3}^3)$ are exactly the ones described in Table \ref{table:comps}.
\end{prop}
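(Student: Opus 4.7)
The plan is to combine the Atkinson classification, stated just before the proposition, with the general machinery of Sections~\ref{sec:torus}--\ref{sec:compression}. By that classification, every $k$-plane in $D_{3,3}^3$ either sits inside some (not necessarily standard) compression space, or---only when $k=2$---is $(\GL_3\times\GL_3)$-equivalent to a $2$-plane of antisymmetric matrices. Consequently every closed point of $\bF_k(D_{3,3}^3)$ lies in $\comp_k(0)\cup\comp_k(1)\cup\comp_k(2)$ or (if $k=2$) in $\comp^*$. It therefore suffices to show that each of these subschemes is actually an irreducible component and that no two of them coincide.

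For the compression pieces the work is already in hand. We have $\kappa(0)=\kappa(2)=5$ and $\kappa(1)=4$, so Corollary~\ref{cor:comp} gives directly that $\comp_k(s)$ is an irreducible component of $\bF_k(D_{3,3}^3)$ of dimension $\delta(s)+(k+1)(\kappa(s)-k)$ whenever $k\le \kappa(s)$, and Proposition~\ref{prop:distinct} ensures that distinct values of $s$ yield distinct components. This settles $k=1,3,4,5$ completely and produces the three compression components that appear in the $k=2$ row.

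The remaining---and principal---step is to verify that $\comp^*\subset\bF_2(D_{3,3}^3)$ is an irreducible component distinct from each $\comp_2(s)$. Irreducibility is automatic because $\comp^*$ is the closure of a single $(\GL_3\times\GL_3)$-orbit. To show that $\comp^*$ is not contained in any compression component, I would exhibit an explicit $2$-plane $\Lambda$ spanned by antisymmetric matrices $A_1,A_2$ in general position; such $A_1,A_2$ have distinct one-dimensional kernels and distinct two-dimensional images, so after any $(\GL_3\times\GL_3)$ change of basis no nonzero element of $\Lambda$ has a zero row or column, and hence $\Lambda$ lies in no compression space. Combined with the Atkinson trichotomy this forces $\comp^*$ to be a maximal irreducible subscheme of $\bF_2(D_{3,3}^3)$. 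Distinctness from each $\comp_2(s)$ follows immediately, since by Corollary~\ref{cor:comp} each $\comp_2(s)$ is its own maximal irreducible subscheme. As a check, the dimension $\dim\comp^*=11$ can be obtained by computing the generic stabilizer in $\GL_3\times\GL_3$: the antisymmetry-preserving subgroup $\{(g,(g^T)^{-1}):g\in\GL_3\}$ is $9$-dimensional and acts on antisymmetric matrices by $M\mapsto gMg^T$; through $\bigwedge^{2}\KK^3\cong\KK^3$ this is the standard $\GL_3$-action on $\Gr(2,3)$, whose generic $2$-plane has a $7$-dimensional stabilizer, and generically no $(g,h)$ with $h\ne(g^T)^{-1}$ can preserve $\Lambda$. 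Thus $\dim\comp^*=18-7=11$, matching $\dim\comp_2(0)=\dim\comp_2(2)=11$.

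The hard part is really the $k=2$ case and specifically the non-compression component $\comp^*$; everything else reduces to a direct application of Corollary~\ref{cor:comp} and Proposition~\ref{prop:distinct}. Once both halves are assembled, Table~\ref{table:comps} lists exactly the irreducible components of $\bF_k(D_{3,3}^3)$ for each $k\in\{1,\ldots,5\}$.
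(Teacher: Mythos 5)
The core difficulty here is the non-compression component $\comp^*$, and your treatment of it goes wrong at the start: in the paper, $\comp^*$ is the $(\GL_3\times\GL_3)$-orbit closure of the point of $\bF_2(D^3_{3,3})$ corresponding to the \emph{full three-dimensional space of antisymmetric matrices} (a projective $2$-plane requires a $3$-dimensional linear span), not of a plane spanned by two antisymmetric matrices $A_1,A_2$. For the object you describe the key claim is in fact false: identifying an antisymmetric matrix with $v\in\KK^3$ via $w\mapsto v\times w$, every member of the pencil spanned by $A_1=M_{v_1}$ and $A_2=M_{v_2}$ maps $V=\langle v_1,v_2\rangle$ into the fixed line spanned by $v_1\times v_2$, so this pencil lies inside a $1$-compression space. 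Your criterion (``no nonzero element has a zero row or column after a change of basis'') only excludes the $s=0$ and $s=2$ compression spaces, which impose a common zero column or row; membership in a $1$-compression space is a common $2\times2$ zero-block condition on the whole linear space, and its individual elements need not have any zero row or column. What actually has to be checked is that the full antisymmetric $2$-plane lies in no compression space of any type, which is true but is not established by your argument. The same misidentification corrupts your dimension count: the stabilizer in $\GL_3\times\GL_3$ of the full antisymmetric space is $\{(g,\lambda g^{-T}):g\in\GL_3,\ \lambda\in\KK^*\}$, of dimension $10$, so $\dim\comp^*=18-10=8$, exactly as in Table~\ref{table:comps}; your answer $11$ contradicts the very table the proposition asserts (the remark that it ``matches'' $\dim\comp_2(0)=\dim\comp_2(2)=11$ should have been a warning sign, since the table lists $\comp^*$ with dimension $8$).

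Separately, the proposition includes the dimension and smoothness data of Table~\ref{table:comps}, and your proposal omits smoothness entirely, which is where most of the paper's proof lives: smoothness of $\comp_k(s)$ exactly for $k\geq 3$ comes from Theorem~\ref{thm:rho}, and smoothness of $\comp^*$ is proved by showing that its only torus fixed points are $S_3\times S_3$-equivalent to the point $P$ of Example~\ref{ex:nonreduced} (in particular ruling out fixed points with a full free row or the two-row pattern considered there) and then performing an explicit local computation at $P$. Your handling of the compression components themselves (Corollary~\ref{cor:comp} plus Proposition~\ref{prop:distinct}, combined with the Atkinson classification) coincides with the paper's, and your overall strategy for $\comp^*$ --- exhibit a point of it lying in no compression space, then invoke the trichotomy --- is the right idea; but as executed the $k=2$ case, which you correctly identify as the hard part, does not go through.
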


  \begin{table}

    \begin{tabular}{|c| c| c|}
     \hline
      &\multicolumn{2}{c|}{$\bF_1(D_{3,3}^3)$}\\
      \cline{2-3}
      & Dimension & Is smooth?\\
      \hline
      $\comp_1(0)$ & $10$ & No\\
      $\comp_1(1)$ & $10$ & No\\
      $\comp_1(2)$ & $10$ & No\\
      \hline
    \end{tabular}
    
\vspace{.5cm}

    \begin{tabular}{|c| c| c|}
     \hline
      &\multicolumn{2}{c|}{$\bF_2(D_{3,3}^3)$}\\
      \cline{2-3}
      & Dimension & Is smooth?\\
      \hline
      $\comp_2(0)$ & $11$ & No\\
      $\comp_2(1)$ & $10$ & No\\
      $\comp_2(2)$ & $11$ & No\\
      $\comp^*$ & $8$ & Yes\\
      \hline
    \end{tabular}

\vspace{.5cm}

    \begin{tabular}{|c| c| c|}
     \hline
      &\multicolumn{2}{c|}{$\bF_3(D_{3,3}^3)$}\\
      \cline{2-3}
      & Dimension & Is smooth?\\
      \hline
      $\comp_3(0)$ & $10$ & Yes\\
      $\comp_3(1)$ & $8$ & Yes\\
      $\comp_3(2)$ & $10$ & Yes\\
      \hline
    \end{tabular}

\vspace{.5cm}
    \begin{tabular}{|c| c| c|}
     \hline
      &\multicolumn{2}{c|}{$\bF_4(D_{3,3}^3)$}\\
      \cline{2-3}
      & Dimension & Is smooth?\\
      \hline
      $\comp_4(0)$ & $7$ & Yes\\
      $\comp_4(1)$
      & $4$ & Yes\\
      $\comp_4(2)$ & $7$ & Yes\\
      \hline
    \end{tabular}

\vspace{.5cm}

    \begin{tabular}{|c| c| c|}
     \hline
      &\multicolumn{2}{c|}{$\bF_5(D_{3,3}^3)$}\\
      \cline{2-3}
      & Dimension & Is smooth?\\
      \hline
      $\comp_5(0)$
      & $2$ & Yes\\
      $\comp_5(2)$
      & $2$ & Yes\\
      \hline
    \end{tabular}

\vspace{.5cm}
    
    \caption{Irreducible components of $\bF_k(D_{3,3}^3)$ for $1\le k\le 5$.}\label{table:comps}
  \end{table}

  \begin{proof}
    That these are exactly the components follows from the above discussion, Corollary \ref{cor:comp}, and Proposition \ref{prop:distinct}. The dimension calculations, with the exception of $\comp^*$, follow from Corollary \ref{cor:comp} as well. 
    The dimension of $\comp^*$ follows from a straightforward calculation of the stabilizer of $\GL_3\times \GL_3$ at the point corresponding to a general antisymmetric matrix, which has dimension $10$.

    Regarding smoothness, the components $\comp_k(s)$ are smooth if and only if $k\geq 3$ by Theorem \ref{thm:rho}.
To show that the component $\comp^*$ is smooth, it suffices to show that it is smooth at its torus fixed points.
Now, we claim that the only fixed points in $\comp^*$ are $S_3\times S_3$-equivalent to the point $P$ in Example \ref{ex:nonreduced} below. Indeed, let $Q$ be a general $3\times 3$ antisymmetric matrix.
A straightforward calculation shows that for any $A,B\in \GL_3$, the space of linear forms spanned by the first row of $A\cdot Q \cdot B$ is at most two-dimensional. Hence, the $\GL_3\times \GL_3$ orbit of $Q$ does not intersect the Pl\"ucker chart containing the torus fixed point
$$
P'=\left(\begin{array}{c c c}
*&*&*\\
0&0&0\\
0&0&0
  \end{array}\right),
$$
so the orbit closure $\comp^*$ does not contain this fixed point, or any fixed point $S_3\times S_3$-equivalent to it.
Furthermore, $\comp^*$ cannot contain a fixed point $S_3\times S_3$-equivalent to 
$$
P''=\left(\begin{array}{c c c}
*&*&0\\
0&0&*\\
0&0&0
  \end{array}\right).
$$
Indeed, $P''$ is in the same $GL_3\times GL_3$ orbit as 
$$
\left(\begin{array}{c c c}
z_0&z_1&z_2\\
0&0&z_2\\
0&0&0
  \end{array}\right),
$$
whose $T$-orbit closure contains $P'$.
The same argument excludes any torus fixed points $S_3\times S_3$-equivalent to $P'^T$ or $P''^T$.  Hence, all fixed points in $\comp^*$ are of the desired form.

Now, an explicit calculation as in the example below shows that $\comp^*$ is smooth at $P$. Hence, $\comp^*$ is smooth at all its torus fixed points, and thus smooth.
  \end{proof}
  \begin{rem}
    Even though all irreducible components of $\bF_3(D_{3,3}^3)$ are smooth, the Fano scheme itself is not smooth, since these components have non-empty intersection.
  \end{rem}
  \begin{ex}[A formal neighborhood in $\bF_2(D_{3,3}^3)$]\label{ex:nonreduced}  
We will analyze a neighborhood of the torus fixed point 
    \begin{equation*}
 P= \left(\begin{array}{c c c}
*&*&0\\
*&0&0\\
0&0&0
  \end{array}\right)
\end{equation*}
in $\bF_2(D_{3,3}^3)$. In this case, it is advantageous  to consider a \emph{formal} neighborhood, since the defining equations are easier to calculate.

First, let us assume that $\chara \KK=0$. Using \emph{Macaulay2} \cite{M2} and the package {\tt VersalDeformations} \cite{ilten:versal}, we calculate a formal neighborhood of this fixed point:
\begin{verbatim}
loadPackage "VersalDeformations";     
A=QQ[x_1..x_9];
M=genericMatrix(R,3,3);  
S=A/ideal det M;                    
J=ideal(x_3,x_5..x_9);
(F,R,G,C)=localHilbertScheme(gens J); 
\end{verbatim}
Primary decomposition of the obstruction equations gives two $11$-dimensional components, one $10$-dimensional component, four $8$-dimensional components, and one $7$-dimensional component.
\begin{verbatim}
decomp=primaryDecomposition(sub(ideal sum G,QQ[gens ring G_0]));
apply(decomp,i->dim i)
\end{verbatim}
Closer inspection shows that the $11$- and $10$-dimensional components are non-embedded, as is one $8$-dimensional component.  Furthermore, these components are all smooth.
The other three $8$-dimensional components are embedded in the $10$-dimensional component, and the $7$-dimensional component is embedded in the two $11$-dimensional components. 

We draw two conclusions. First, $\bF_2(D_{3,3}^3)$ is non-reduced. Second, all irreducible components of $\bF_2(D_{3,3}^3)$ (with their reduced structure) are smooth at $P$. In particular, $\comp^*$ is smooth at $P$. 

Even if $\chara \KK >0$, the above calculation shows that $\comp^*$ is smooth at $P$. Indeed, it still follows from the calculation that $\bF_2(D_{3,3}^3)$ contains a  $10$-dimensional subscheme $Z$ contained in $\comp^*$ and smooth at $P$. But since $\dim \comp^*=10$, this is just $\comp^*$.
\end{ex}

\subsection{The Fano Schemes $\bF_k(P_{3,3}^3)$}\label{sec:threetimesthreep}
Using the results of the previous sections, we may also say quite a bit about $\bF_k(P_{3,3}^3)$ for $1\leq k\leq 5$.  It is never irreducible, and it is smooth if and only if $k=5$. It is disconnected if $k=5$ and connected if $k\leq 3$. By the discussion below, we will see that it is disconnected if $k=4$. Furthermore, $\bF_5(P_{3,3}^3)$ consists just of  $6$ points, and hence has degree $6$, while by Remark \ref{rem:degperm}, the degree of $\bF_1(P_{3,3}^3)$ is $2754$.

{\bf 4-planes on the $3\times 3$ permanental hypersurface.}  As an example of the rich geometry that can occur, we will now give a detailed description of $\bF_4(P_{3,3}^3)$. We will do this by looking at the local structure structure of  $\bF_4(P_{3,3}^3)$ around torus fixed points.
For simplicity, we assume $\chara \KK =0$, although we expect the calculation to hold in arbitrary characteristic.

Let's start by considering the fixed point 
$$P=\left(\begin{array}{c c c}
0&0&*\\
0&0&*\\
*&*&*
\end{array}\right).
$$
There are nine fixed points in the $S_3\times S_3$ orbit of $P$. Now, explicit calculation on the corresponding Pl\"ucker chart shows that around $P$, $\bF_4(P_{3,3}^3)$ is cut out by the equations $r_1r_2=s_1s_2=0$ in $\Spec \KK[r_1,r_2,s_1,s_2]$, with a parametrization given by
$$\left(\begin{array}{c c c}
s_1z_0+r_1z_2-r_1s_1z_4&s_2z_0-r_1z_3+r_1s_2z_4&z_0\\
-s_1z_1+r_2z_2+r_2s_1z_4&-s_2z_1-r_2z_3-r_2s_2z_4&z_1\\
z_2&z_3&z_4
\end{array}\right).
$$
On this chart, $\bF_4(P_{3,3}^3)$ thus decomposes into four copies of $\A^2$. 

Each copy of $\A^2$ compactifies in $\bF_4(P_{3,3}^3)$ to a $\PP^1\times \PP^1$, as can be seen by considering transition maps to the charts containing the other fixed points in the $S_3\times S_3$ orbit of $P$.
In fact, one can parametrize these components explicitly. 
For example, for the component given locally by $r_2=s_2=0$, we can identify a point $((u_0:u_1),(v_0:v_1))$ with the space of matrices that are orthogonal to the four matrices
\begin{equation*}
\left(\begin{array}{c c c}
0&0&0\\
  0&1&0\\
0&0&0\\
\end{array}\right)
\quad
\left(\begin{array}{c c c}
0&0&0\\
u_0&0&u_1\\
0&0&0
\end{array}\right)
\quad
\left(\begin{array}{c c c}
0&v_0&0\\
0&0&0\\
0&v_1&0
\end{array}\right)
\quad 
\left(\begin{array}{c c c}
u_0v_0&0&-u_1v_0\\
0&0&0\\
-u_0v_1&0&u_1v_1
\end{array}\right)
\end{equation*}
\noindent with respect to the entrywise inner product.
On the affine chart of this $\PP^1\times \PP^1$ given by $u_0\neq 0$, $v_0\neq 0$, there is an isomorphism with the $r_2=s_2=0$ copy of $\A^2$ from above, gotten by setting $s_1=u_1/u_0$ and $r_1=v_1/v_0$.
  The other three standard affine charts of this $\PP^1 \times \PP^1\subset \bF_4(P_{3,3}^3)$ are just copies of $\A^2$ containing other fixed points in the $S_3\times S_3$ orbit of $P$.

The $S_3\times S_3$ action on rows and columns gives a total of nine copies of $\PP^1\times \PP^1$; these are exactly the irreducible components containing a fixed point equivalent to $P$. The previous parametrization shows that they are all embedded in a linear subspace of $\Gr(5,9)$ by $\CO(2,2)$, and hence each has degree $8$.
Together, they form a connected component of $\bF_4(P_{3,3}^3)$. Its dual intersection complex, drawn on the fundamental domain of a torus, is pictured in Figure \ref{fig:complex1}. The fixed points, corresponding to the nine squares, are labeled.

\begin{figure}
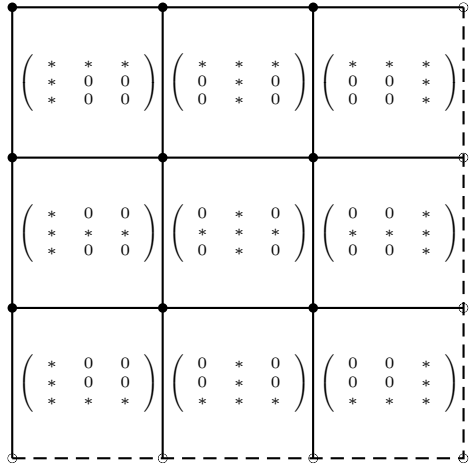

  \begin{center} \intersectioncompa
  \end{center}
  \caption{Dual intersection complex for the first connected component of $\bF_4(P_{3,3}^3)$}\label{fig:complex1}
\end{figure}

Next, consider the fixed point 
$$Q=\left(\begin{array}{c c c}
0&0&0\\
0&*&*\\
*&*&*
\end{array}\right).
$$
There are 18 fixed points in the $S_3\times S_3$ orbit of $Q$, and after transposition, one gets 
18 more fixed points. Together with the orbit of $P$ above, this covers all fixed points in $\bF_4(P_{3,3}^3)$.

Explicit calculation on the Pl\"ucker chart containing $Q$ shows that $Q$ is contained in $4$ components of $\bF_4(P^3_{3,3})$. One component is just the component $\comp_4(\{1,2,3\},\{2,3\})$, which is isomorphic to $\Gr(5,6)\cong \PP^5$, linearly embedded and hence of degree $1$.
There is an  embedded component which is just a fat point supported on $Q$. 
The other two components are $2$-dimensional, locally cut out by the equation $t_1t_2=0$ in 
in 
 $\Spec \KK[t_1,t_2,t_3]$, with a parametrization given by
$$\left(\begin{array}{c c c}
-t_1z_2&t_2t_3z_3+t_1z_3-t_3z_0&-t_2t_3z_4+t_1z_4+t_3z_1\\
-t_2z_2&z_0&z_1\\
z_2&z_3&z_4
\end{array}\right).
$$

By again considering transition maps to other charts, one sees that both of these components compactify in $\bF_4(P_{3,3}^3)$ to $\F_1$, the first Hirzebruch surface.

As above, we can parametrize these components explicitly. Consider the Cox ring $\KK[u_0,u_1,v,w]$ of $\F_1$, with the degrees of $u_0,u_1,v,w$ given by the columns of the matrix 
$$
\left(\begin{array}{c c c c}
 1&1&1&0\\
 0&0&1&1
\end{array}\right).
$$
For the component given locally by $t_2=0$, we can identify a point $(u_0:u_1:v:w)$ with the space of matrices orthogonal to the four matrices
\begin{equation*}
\left(\begin{array}{c c c}
0&0&0\\
1&0&0\\
0&0&0\\
\end{array}\right)
\quad
\left(\begin{array}{c c c}
u_0&0&0\\
0&0&0\\
u_1&0&0
\end{array}\right)
\quad
\left(\begin{array}{c c c}
0&u_0w&0\\
0&v&0\\
0&u_1w&0
\end{array}\right)
\quad 
\left(\begin{array}{c c c}
0&0&u_0w\\
0&0&-v\\
0&0&-u_1w
\end{array}\right)
\end{equation*}
\noindent with respect to the entrywise inner product.
A parametrization of the other component is gotten by an appropriate permutation of the above.
It follows from this parametrization  that both copies of $\F_1$ are embedded by a divisor of type $(3,2)$ in the given basis of the Picard group, which is simply the anticanonical class. In particular, each of these components has degree $8$. 

By letting $S_3\times S_3$ act, we get a total of nine copies of $\F_1$, each of which contains four fixed points in the $S_3\times S_3$ orbit of $Q$. The dual intersection complex is 
the simplicial complex which is the union of the boundary complexes of three disjoint 2-simplices, with intersections taking place in torus invariant divisors with self-intersection zero. The  $S_3\times S_3$ action also gives us $3$ copies of $\PP^5$, and each $\F_1$ intersects exactly one of these in a line and the other two in a point.
Together, these form a second connected component of $\bF_4(P_{3,3}^3)$. 

Finally, transposing all matrices gives a third connected component, isomorphic to the second. We conclude that $\bF_4(P_{3,3}^3)$ is neither reduced nor connected, though in their reduced structures, every irreducible component is smooth.
Summing this all up, we have the following:

\begin{table}
    \begin{tabular}{c  c}

    \begin{tabular}{|c| c| c|}
     \hline
      \multicolumn{3}{|c|}{Connected Component I}\\
      \hline
      Type & \# & Degree\\
      \hline
      $\PP^1\times\PP^1$ &$9$&$8$\\
      \hline
    \end{tabular}
&\quad
\begin{tabular}{|c| c| c|}
     \hline
      \multicolumn{3}{|c|}{Connected Components II and III}\\
      \hline
      Type & \# & Degree\\
      \hline
      $\PP^5$&$3$&$1$\\
      $\F_1$&$9$&$8$\\
      Fat point & $18$& $-$\\
      \hline
    \end{tabular}

  \end{tabular}
\vspace{.5cm}

    \caption{Primary decomposition for the connected components of $\bF_4(P_{3,3}^3)$.}\label{table:compsp}
  \end{table}

  \begin{prop}\label{prop:f4p333}
    The Fano scheme $\bF_4(P_{3,3}^3)$ has exactly three connected components. Their primary decompositions are as described in Table \ref{table:compsp}.
  \end{prop}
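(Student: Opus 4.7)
The plan is to carry out a torus-local analysis at each torus fixed point of $\bF_4(P_{3,3}^3)$, then globalize. By Proposition~\ref{prop:fixed}, every torus fixed point is a subspace of a standard compression space, and since $\bF_4(P_{3,3}^3)$ is projective with a $T$-action, every irreducible (hence every connected) component contains a torus fixed point. First I would enumerate all torus fixed points up to the $S_3\times S_3$ symmetry by choosing $5$ star-entries of a $3\times 3$ matrix subject to containing an all-zero $2\times 2$ submatrix. An elementary combinatorial check reduces the fixed point set to three orbits: the nine-point orbit through $P$ (with a $2\times 2$ zero block in a corner), the eighteen-point orbit through $Q$ (with an entire zero row adjacent to the zero block), and the eighteen-point orbit through $Q^T$.

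Next, for each orbit representative I would describe $\bF_4(P_{3,3}^3)$ formally on the Plücker chart dual to the fixed point, exactly as in Example~\ref{ex:nonreduced}. Near $P$, parametrize nearby $4$-planes by perturbing the five non-zero entries; the condition that the $3\times 3$ permanent vanishes on the corresponding $4$-plane produces, after elimination, the complete-intersection ideal $(r_1 r_2,\, s_1 s_2)\subset \KK[r_1,r_2,s_1,s_2]$ whose four $\A^2$-branches are the local components at $P$. Near $Q$, the same computation yields four primary components: a smooth $5$-dimensional branch, two smooth $2$-dimensional branches cut out by $t_1 t_2 = 0$, and an embedded fat point supported at $Q$. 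Both computations can be verified explicitly in \texttt{Macaulay2} using the package \cite{ilten:versal}.

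To globalize, I would use the transition maps between Plücker charts at the various fixed points in each $S_3\times S_3$ orbit. The four $\A^2$-branches through the nine fixed points of the $P$-orbit glue (via the explicit orthogonality parametrizations sketched before the proposition) into nine copies of $\PP^1\times \PP^1$ embedded by $\CO(2,2)$, each containing four fixed points; these assemble into a single connected component whose dual incidence complex is drawn in Figure~\ref{fig:complex1}. Similarly, the two $\A^2$-branches through each fixed point in the $Q$-orbit glue into nine Hirzebruch surfaces $\F_1$ embedded anticanonically, and the three smooth $5$-dimensional branches are exactly the compression components $\comp_4(\{1,2,3\},\tau)\cong \PP^5$ for $|\tau|=2$; together these form a second connected component, and the $Q^T$-orbit contributes a symmetric third. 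Disjointness of the three connected components follows since the three orbits of fixed points are disjoint and every component contains one. The eighteen plus eighteen embedded fat points at the $Q$- and $Q^T$-fixed points account for the remaining primary components of Table~\ref{table:compsp}.

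The main obstacle will be the local computation at $Q$: separating the embedded fat point from the reduced primary components, and distinguishing embeddedness from a nonreduced structure on a reduced component, requires a careful primary decomposition of the obstruction ideal exactly as in Example~\ref{ex:nonreduced}. A second subtlety is identifying the global compactification of two $\A^2$-charts at a $Q$-fixed point as $\F_1$ rather than $\PP^1\times \PP^1$; this comes down to tracking the self-intersections of the torus-invariant curves in the glued surface, after which the degree in the Plücker embedding is read off from the Cox-ring parametrization.
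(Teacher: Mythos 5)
Your plan is essentially the paper's own proof: explicit computations on the Pl\"ucker charts at representatives of the three $S_3\times S_3$-orbits of torus fixed points (the local equations $r_1r_2=s_1s_2=0$ at $P$ and $t_1t_2=0$ plus the $\PP^5$-branch and embedded fat point at $Q$), followed by gluing via transition maps and the symmetry action to obtain the nine $\PP^1\times\PP^1$'s, nine $\mathcal{F}_1$'s, three $\PP^5$'s and the fat points, and then reading off the three connected components from the fact that every component contains a fixed point. One slip to fix: your combinatorial criterion for the fixed points is misstated — by Proposition~\ref{prop:fixed} a fixed point must be a subspace of a \emph{standard compression space}, which for $r=m=n=3$ means its zero pattern contains a zero row, a zero column, or a $2\times 2$ zero block, not necessarily an all-zero $2\times 2$ submatrix; the $Q$-orbit points (zero row plus one extra zero) contain no $2\times 2$ zero block, so your criterion as literally stated would miss $36$ of the $45$ fixed points, although your final orbit count $9+18+18$ is the correct one. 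Also, when concluding disjointness, the clean argument is that a nonempty intersection of two torus-invariant closed subschemes would itself contain a fixed point, which your local analyses rule out, rather than merely that the fixed-point orbits are disjoint.
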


\section{Comparisons and Further Questions}\label{sec:conclusion}

In our study of Fano schemes of determinants and permanents, we have seen that a good understanding of the components $\comp_k(s)$ and $\comp_k(\sigma,\tau)$ induced by compression spaces can already say a lot about the geometry of the entire Fano scheme.  One striking difference between $\bF_k(\dmn)$ and $\bF_k(\pmn)$ is that while the components $\comp(s)$ are positive-dimensional, the components $\comp(\sigma,\tau)$ are isolated points. This has the following consequences:
\begin{itemize}
  \item For large values of $k$, the dimension of $\bF_k(\dmn)$ is greater than that of $\bF_k(\pmn)$. In other words, $\dmn$ contains `more' high-dimensional linear spaces than $\pmn$.
  \item For large values of $k$, $\bF_k(\dmn)$ has fewer irreducible components than $\bF_k(\pmn)$.
\end{itemize}

For the case of lines on the determinantal and permanental hypersurfaces, however, we conjecture that $P_{n,n}^n$ behaves exactly as $D_{n,n}^n$ does; we have verified that this is the case for $n=3,4$.

\begin{conj}\label{conj:expected}
  The Fano scheme $\bF_1(P_{n,n}^n)$ has dimension $2n^2-n-5$, that is, the expected dimension, and is a reduced local complete intersection.
\end{conj}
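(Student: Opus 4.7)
The plan is to bridge two bounds on the dimension of each irreducible component $Z$ of $\bF_1(P_{n,n}^n)$. Write $N:=2n^2-n-5$ for the expected dimension. First, since $P_{n,n}^n$ is a hypersurface of degree $n$ in $\PP^{n^2-1}$, the Fano scheme $\bF_1(P_{n,n}^n)$ is cut out in $\Gr(2,n^2)$ as the zero locus of a section of the rank $n+1$ bundle $\Sym^n S^*$, exactly as used in Proposition~\ref{prop:deglines}. By \cite[Proposition~8.4]{3264}, every irreducible component $Z$ therefore has dimension at least $\dim\Gr(2,n^2)-(n+1)=N$. The content of the conjecture is the matching upper bound $\dim Z\le N$; being a reduced local complete intersection will follow formally.

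For the upper bound, I would first prove in detail the tangent space bound sketched in Remark~\ref{rem:expp}: at a general point $\eta$ of any $\comp_1(\sigma,\tau)$, $\dim T_\eta \bF_1(P_{n,n}^n)\le \delta(s)+2(\kappa(s)-1)=N$, where $s=|\tau|$. This is a $k=1$ analogue of Theorem~\ref{thm:tsperm}, applying Lemma~\ref{lemma:abc} to a torus fixed point of $\comp_1(\sigma,\tau)$ whose standard form has $B$ and $C$ chosen analogously to the tridiagonal matrices $B(z_0,z_1,z_2)$ and $C(z_0,z_1,z_2)$ of~\eqref{eq:bcperm}, adapted to the $k=1$ case. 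Expanding the $n\times n$ permanent of the resulting block matrix and invoking Lemma~\ref{lemma:permterms} should yield enough independent linear constraints on the entries of $A$ to cut $a_{\perm}$ down to exactly $\delta(s)$.

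The hardest step, flagged in Remark~\ref{rem:expp} itself, is to show that every irreducible component $Z$ contains one of the subschemes $\comp_1(\sigma,\tau)$. Once this is known, $Z$ contains a general point of some $\comp_1(\sigma,\tau)$, and the tangent space bound above gives $\dim Z\le N$. As a starting point, $Z$ is $T_n\times T_n$-invariant (the torus being connected), so by the Borel fixed point theorem it contains a torus fixed point, which by Proposition~\ref{prop:fixed} lies in some $\comp_1(\sigma,\tau)$. To upgrade ``meets'' to ``contains,'' I would attempt either (i) a direct classification, up to $S_n\times S_n\rtimes\ZZ/2$-symmetry, of pencils of matrices with vanishing $n\times n$ permanent, in the spirit of the Kronecker canonical form used in the proof of Theorem~\ref{thm:lines} but without the $\GL_n\times\GL_n$ symmetry available there; or (ii) a local deformation-theoretic analysis at a torus fixed point $p\in\comp_1(\sigma,\tau)\cap Z$, aimed at showing that $\comp_1(\sigma,\tau)$ appears as an irreducible component of the germ of $\bF_1(P_{n,n}^n)$ at $p$, so that $\comp_1(\sigma,\tau)\subseteq Z$ by irreducibility of $\comp_1(\sigma,\tau)$. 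The combinatorics of non-standard permanental pencils seems substantially richer than in the determinantal setting, which is why our computational verification stops at $n=3,4$.

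Once the dimension equality $\dim Z=N$ is established on every component, $\bF_1(P_{n,n}^n)$ is cut out in the smooth ambient $\Gr(2,n^2)$ by $n+1$ equations in codimension exactly $n+1$, so it is a local complete intersection; in particular it is Cohen--Macaulay and has no embedded components. At a general point $\eta$ of any $\comp_1(\sigma,\tau)\subseteq Z$, the tangent space has dimension exactly $N=\dim Z$, so $Z$ is smooth at $\eta$. Hence $\bF_1(P_{n,n}^n)$ is generically reduced along each component, and together with Cohen--Macaulayness this yields reducedness.
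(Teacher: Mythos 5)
You should first note that the statement you are addressing is stated in the paper only as Conjecture~\ref{conj:expected}: the paper gives no proof, only the strategy sketched in Remark~\ref{rem:expp} together with computer verification for $n=3,4$. Your proposal reproduces that strategy essentially verbatim: the lower bound $\dim Z\ge 2n^2-n-5$ from realizing $\bF_1(P_{n,n}^n)$ as the zero scheme of a section of the rank $n+1$ bundle $\Sym^n S^*$ on $\Gr(2,n^2)$, the tangent-space bound $\delta(s)+2(\kappa(s)-1)$ at a \emph{general} point of each $\comp_1(\sigma,\tau)$, and the derivation of ``reduced local complete intersection'' from equidimensionality exactly as in the determinantal Theorem~\ref{thm:lines}. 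None of this closes the conjecture, because the two substantive steps are left unexecuted.

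The genuine gap --- which you flag yourself --- is the step that every irreducible component $Z$ \emph{contains} (not merely meets) some $\comp_1(\sigma,\tau)$. The Borel fixed point theorem plus Proposition~\ref{prop:fixed} only produces a torus fixed point of $Z$ lying in some $\comp_1(\sigma,\tau)$, and at such special points the tangent space dimension typically exceeds the expected dimension, so no bound on $\dim Z$ follows from meeting alone; neither of your two suggested routes (a classification of permanental pencils, for which no Kronecker-type normal form is available since the $\GL_n\times\GL_n$ symmetry is absent, or a local deformation analysis at the fixed point) is carried out, and in the determinantal case it is precisely the Kronecker form that supplies this step in the proof of Theorem~\ref{thm:lines}. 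A second, smaller gap: the tangent-space bound of Remark~\ref{rem:expp} is itself only asserted in the paper, and your sketch of it cannot invoke the tridiagonal matrices $B(z_0,z_1,z_2)$ and $C(z_3,z_4,z_5)$ of Theorem~\ref{thm:tsperm}, since for $k=1$ only two independent forms $z_0,z_1$ are available; the needed adaptation is real work (the target is $a_{\perm}\le\delta(s)$, not $a_{\perm}=0$, in the notation of Lemma~\ref{lemma:abc}), and ``should yield enough independent constraints'' is not an argument. In short, you have correctly framed the problem and correctly identified the missing ingredient, but that ingredient is exactly the open step that led the authors to state the result as a conjecture rather than a theorem, so the proposal does not constitute a proof.
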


Even though the compression subschemes $\comp_k(s)$ and $\comp(\sigma,\tau)$ give a surprising amount of information about the global structure of our Fano schemes, one way in which they fail to tell the whole story is that they don't by themselves detect connectedness, as far as we know.  See Remark~\ref{rem:connectedness} for a detailed discussion.  In particular, we would like to know:

\begin{question}
Is the Fano scheme $\bF_{10}(D^4_{5,5})$ connected? (See Remark~\ref{rem:connectedness}.)  
More generally, is there a Fano scheme $\fd$ which is connected but whose compression components $\comp_k(s)$ are not by themselves connected?

\end{question}

A frequently asked question regarding a moduli space is whether it is \emph{rational} or \emph{unirational}. The components $\comp_k(s)$ of $\bF_k(\dmn)$ are by construction unirational; furthermore, the single exotic component $\comp^*$ of $\bF_2(D_{3,3}^3)$ is also obviously unirational. 
We suspect that all irreducible components of $\bF_k(\dmn)$ may well be unirational. Rationality is more delicate: we do not even know if the components $\comp_k(s)$ are rational, although we can prove that they are for the cases $s=0$ and $r-1$.
\begin{question}
  Are the irreducible components of $\bF_k(\dmn)$ unirational? Are they rational?
\end{question}

Finally, our choice of the schemes $\dmn$ and $\pmn$ was motivated by Valiant's conjecture.  They can be simultaneously generalized, however, in the following interesting way.  Fix any character $\chi$ of the symmetric group $S_r$ on $r$ letters.  The {\em $\chi$-immanant} of an $r\times r$ matrix $(x_{ij})$ is the degree $r$ polynomial with $r!$ terms
$$\textrm{Imm}_\chi = \sum_{\sigma\in S_r} \chi(\sigma) x_{1\sigma(1)}\cdots x_{r\sigma(r)}.$$
Thus the $\chi$-immanant specializes to the determinant and permanent by choosing $\chi$ to be the sign or trivial character, respectively.  Now consider the scheme $D^{\chi,r}_{m,n} \subset \PP^{mn-1}$ cut out by the $r\times r$ $\chi$-immanants of a general $m\times n$ matrix.  As $\chi$ varies, we have a family of schemes that interpolate between $\dmn$ and $\pmn$.
\begin{question}
  What is the structure of the Fano scheme $\bF_k(D^{\chi,r}_{m,n})$?
Does $\bF_k(D^{\chi,r}_{m,n})$ behave like $\bF_k(\dmn)$ or $\bF_k(\pmn)$?  
\end{question}

\bibliographystyle{alpha}
\bibliography{fano}

\begin{thebibliography}{BLMW11}

\bibitem[AK77]{altman:77a}
Allen~B. Altman and Steven~L. Kleiman.
\newblock Foundations of the theory of {F}ano schemes.
\newblock {\em Compositio Math.}, 34(1):3--47, 1977.

\bibitem[Atk83]{atkinson:83a}
M.~D. Atkinson.
\newblock Primitive spaces of matrices of bounded rank. {II}.
\newblock {\em J. Austral. Math. Soc. Ser. A}, 34(3):306--315, 1983.

\bibitem[Bea87]{beasley:87a}
LeRoy~B. Beasley.
\newblock Null spaces of spaces of matrices of bounded rank.
\newblock In {\em Current trends in matrix theory ({A}uburn, {A}la., 1986)},
  pages 45--50. North-Holland, New York, 1987.

\bibitem[BLMW11]{gct}
Peter B{\"u}rgisser, J.~M. Landsberg, Laurent Manivel, and Jerzy Weyman.
\newblock An overview of mathematical issues arising in the geometric
  complexity theory approach to {${\rm VP}\neq{\rm VNP}$}.
\newblock {\em SIAM J. Comput.}, 40(4):1179--1209, 2011.

\bibitem[BVdV79]{barth:81a}
W.~Barth and A.~Van~de Ven.
\newblock Fano varieties of lines on hypersurfaces.
\newblock {\em Arch. Math. (Basel)}, 31(1):96--104, 1978/79.

\bibitem[Die49]{dieudonne}
Jean Dieudonn{\'e}.
\newblock Sur une g\'en\'eralisation du groupe orthogonal \`a quatre variables.
\newblock {\em Arch. Math.}, 1:282--287, 1949.

\bibitem[DM98]{debarre:98a}
Olivier Debarre and Laurent Manivel.
\newblock Sur la vari\'et\'e des espaces lin\'eaires contenus dans une
  intersection compl\`ete.
\newblock {\em Math. Ann.}, 312(3):549--574, 1998.

\bibitem[Dra06]{draisma}
Jan Draisma.
\newblock Small maximal spaces of non-invertible matrices.
\newblock {\em Bull. London Math. Soc.}, 38(5):764--776, 2006.

\bibitem[dSP10]{pazzis}
Cl{\'e}ment de~Seguins~Pazzis.
\newblock The classification of large spaces of matrices with bounded rank.
\newblock arXiv:1004.0298v3 [math.RA], 2010.

\bibitem[EH88]{eisenbud:88a}
David Eisenbud and Joe Harris.
\newblock Vector spaces of matrices of low rank.
\newblock {\em Adv. in Math.}, 70(2):135--155, 1988.

\bibitem[EH13]{3264}
D.~Eisenbud and J.~Harris.
\newblock 3264 and all that. {I}ntersection theory in algebraic geometry.
\newblock Manuscript available at\\
  \url{http://isites.harvard.edu/fs/docs/icb.topic720403.files/book.pdf}, 2013.

\bibitem[Gan59]{gantmacher}
F.~R. Gantmacher.
\newblock {\em The theory of matrices. {V}ols. 1, 2}.
\newblock Translated by K. A. Hirsch. Chelsea Publishing Co., New York, 1959.

\bibitem[GS]{M2}
Daniel~R. Grayson and Michael~E. Stillman.
\newblock Macaulay2, a software system for research in algebraic geometry.
\newblock Available at \href{http://www.math.uiuc.edu/Macaulay2/}%
  {http://www.math.uiuc.edu/Macaulay2/}.

\bibitem[Har66]{hartshorne:66a}
Robin Hartshorne.
\newblock Connectedness of the {H}ilbert scheme.
\newblock {\em Inst. Hautes \'Etudes Sci. Publ. Math.}, (29):5--48, 1966.

\bibitem[Har10]{hartshorne:10a}
Robin Hartshorne.
\newblock {\em Deformation theory}, volume 257 of {\em Graduate Texts in
  Mathematics}.
\newblock Springer, New York, 2010.

\bibitem[HMP98]{harris:98a}
Joe Harris, Barry Mazur, and Rahul Pandharipande.
\newblock Hypersurfaces of low degree.
\newblock {\em Duke Math. J.}, 95(1):125--160, 1998.

\bibitem[Ilt12]{ilten:versal}
Nathan~Owen Ilten.
\newblock Versal deformations and local {H}ilbert schemes.
\newblock {\em J. Softw. Algebra Geom.}, 3:12--16, 2012.

\bibitem[Kir08]{kirkup}
George~A. Kirkup.
\newblock Minimal primes over permanental ideals.
\newblock {\em Trans. Amer. Math. Soc.}, 360(7):3751--3770, 2008.

\bibitem[Lan97]{langer:97a}
Adrian Langer.
\newblock Fano schemes of linear spaces on hypersurfaces.
\newblock {\em Manuscripta Math.}, 93(1):21--28, 1997.

\bibitem[Lan13]{landsberg}
J.M. Landsberg.
\newblock Geometric complexity theory: an introduction for geometers.
\newblock arXiv:1305.7387v2 [math.AG], 2013.

\bibitem[LS00]{laubenbacher}
Reinhard~C. Laubenbacher and Irena Swanson.
\newblock Permanental ideals.
\newblock {\em J. Symbolic Comput.}, 30(2):195--205, 2000.

\bibitem[MM05]{manivel:05a}
L.~Manivel and E.~Mezzetti.
\newblock On linear spaces of skew-symmetric matrices of constant rank.
\newblock {\em Manuscripta Math.}, 117(3):319--331, 2005.

\bibitem[Muk93]{mukai}
Shigeru Mukai.
\newblock Curves and {G}rassmannians.
\newblock In {\em Algebraic geometry and related topics ({I}nchon, 1992)},
  Conf. Proc. Lecture Notes Algebraic Geom., I, pages 19--40. Int. Press,
  Cambridge, MA, 1993.

\bibitem[Val79]{valiant:79a}
L.~G. Valiant.
\newblock Completeness classes in algebra.
\newblock In {\em Conference {R}ecord of the {E}leventh {A}nnual {ACM}
  {S}ymposium on {T}heory of {C}omputing ({A}tlanta, {G}a., 1979)}, pages
  249--261. ACM, New York, 1979.

\end{thebibliography}

\end{document}